\numberwithin{equation}{section}
\theoremstyle{plain}
\newtheorem{theorem}{Theorem}[section]
\newtheorem{lemma}[theorem]{Lemma}
\newtheorem{corollary}[theorem]{Corollary}
\theoremstyle{definition}
\newtheorem{definition}[theorem]{Definition}
\newtheorem{con}[theorem]{Convention}
\newcommand\varnot{\varnothing}
\newcommand\mo{^{-1}}
\newcommand\conv{^{\scriptstyle\smallsmile}}
\newcommand\ra{relation algebra}
\newcommand\g{\gamma}
\newcommand\lph{\alpha}
\newcommand\ka{\kappa}
\newcommand\seq{\subseteq}
\newcommand\vph{\varphi}
\newcommand\wi{xy}
\newcommand\wj{yz}
\newcommand\gxhi{G_\wx/H_\xy}
\newcommand\gyki{G_\wy/K_\xy}
\newcommand\xy{{xy}}
\newcommand\gm{\gamma}
\newcommand\tbigcup{\textstyle \bigcup}
\newcommand\vp{\varphi}
\newcommand\wx{x}
\newcommand\wy{y}
\newcommand\kp{\kappa}
\newcommand\qeddef{\qed}
\newcommand\dlt{\delta}
\newcommand\bt{\beta}
\newcommand\tbigcap{\textstyle \bigcap}
\newcommand\setdiff{\sim}
\newcommand\setcomp{\sim}
\newcommand\opar{\textnormal{(}}     % open parentheses
\newcommand\cpar{\textnormal{)}}     % close parentheses
\newcommand\xx{{xx}}
\newcommand\ex[1]{e_{#1}}
\newcommand\la{\lambda}
\newcommand\vth{\vartheta}
\newcommand\gp{G/P}
\newcommand\tsbigc{\textstyle \bigcup\limits}
\newcommand\yx{{yx}}
\newcommand\yz{{yz}}
\newcommand\x{\xi}
\newcommand\Ga{\varGamma}
\newcommand\wwz{{wz}}
\newcommand\xz{{xz}}
\newcommand\M{M}
\newcommand\gxm{G_\wx/\m 0}
\newcommand\gyp{G_\wy/\p 0}
\newcommand\gzn{G_\wz/\n 0}
\newcommand\De{\varDelta}
\newcommand\wz{z}
\newcommand\ch{\chi}
\newcommand\zt{\zeta}
\newcommand\rh{\rho}
\newcommand\sgm{\sigma}
\newcommand\relprod{\bigm\vert}
\newcommand\Ps{\varPsi}
\newcommand\Ph{\varPhi}
\newcommand\kh{K_{\xy}\scir\h\yz}
\newcommand\hh{\h\xy\scir\h\xz}
\newcommand\zx{{zx}}
\newcommand\zy{{zy}}
\newcommand\ggra{group relation algebra}
\newcommand\kk{K_{xz}\scir K_{yz}}
\newcommand\yy{{yy}}
\newcommand\zz{{zz}}
\newcommand\mce{\mathcal{E}}
\newcommand\scir{\mathbin{\raise2pt\hbox{$\scriptscriptstyle\circ$}}}
\newcommand\rp{\mid}
\newcommand\gcm[1]{{\mathfrak {Cm}}(#1\/)}
\newcommand\rr[1]{R_{#1}}
\newcommand\idd[1]{id_{#1}}
\newcommand\h[1]{H_{#1}}
\newcommand\diverse[1]{di_{#1}}
\newcommand\full[1]{\mathfrak{Re}({#1})}
\newcommand\cs[2]{{#1}_{#2}}
\newcommand\f[1]{{\mathfrak {#1}}}
\newcommand\id[1]{id_{#1}}
\newcommand\pair[2]{(#1,#2)}
\newcommand\bbigcup[1]{{\textstyle\bigcup_{#1}}}
\newcommand\com[1]{\mathfrak{Cm}({#1})}
\newcommand\G[1]{G_{#1}}
\newcommand\e[1]{e_{#1}}
\newcommand{\mc}[1]{\mathcal{#1}}
\newcommand\vphi[1]{\vph_{#1}}
\newcommand\hs[2]{H_{#1,#2}}
\newcommand\ks[2]{K_{#1,#2}}
\newcommand\kai[1]{\ka_{#1}}
\newcommand\gsq[2]{\G{#1}\times \G {#2}}
\newcommand\ho[2]{\varphi_{#1#2}}
\newcommand\vphih[1]{{\hat\vph}_{#1}}
\newcommand\p[1]{P_{#1}}
\newcommand\m[1]{M_{#1}}
\newcommand\n[1]{N_{#1}}
\newcommand\subs[2]{#1_{#2}}
\newcommand\trip[3]{(#1,#2,#3)}
\newcommand\kap[2]{\kappa_{{#1}{#2}}}
\newcommand\newrr[3]{R_{{#1}{#2},{#3}}}
\newcommand\newhl[2]{H_{{#1}{#2}}}
\newcommand\newhr[2]{K_{{#1}{#2}}}
\newcommand\newgl[2]{G_{#1}/H_{{#1}{#2}}}
\newcommand\newgr[2]{G_{#2}/K_{{#1}{#2}}}
\newcommand\grp[1]{G_{#1}}
\newcommand\cra[2]{\mathfrak{#1}{[{\mathcal{#2}}]}}
\newcommand\craset[2]{#1{[{\mathcal{#2}}]}}
\newcommand\cras[2]{\mathfrak{#1}{[{#2}]}}
\newcommand{\SB}[1]{\ensuremath{\textit{Sb}{({#1}\/)}}}
\newcommand\rrcomma{\,\textnormal{,}}
\newcommand\smbcomma{\,,}
\newcommand{\comma}{\textnormal{,}\ }
\newcommand\per{\textnormal{\myspace.\ }}
\newcommand\ident{1\mynegspace\textnormal{\rq}}
\newcommand\co{\textnormal{,}\ }     % comma
\newcommand\po{\textnormal{.}\ }     % point
\newcommand{\myspace}{{\hspace*{.5pt}}} %szovegben 1szer
\newcommand\mmmyspace{{\hspace*{.1pt}}}  %szovegben 5szor
\newcommand{\mynegspace}{{\hspace*{-.5pt}}}
\newcommand{\myshortspace}{{\hspace*{.1pt}}}
\begin{document}

%%%%%%%%%%%%%%%%%%%%%%%%%%%%%%%%%%%%%%%%%%%%%%%%%%%%%%%%%%%%%%%%%%%%%%
%% FRONT MATTER
%%%%%%%%%%%%%%%%%%%%%%%%%%%%%%%%%%%%%%%%%%%%%%%%%%%%%%%%%%%%%%%%%%%%%%

\title[Relation algebras and groups]{Relation algebras and groups}

\author[S. Givant]{Steven Givant}
\address{Mills College\\5000 MacArthur Boulevard\\Oakland CA 94613\\USA}
\email{givant@mills.edu}

\thanks{This research was partially supported  by Mills College.}

\dedicatory{This article is dedicated to Bjarni J\'onsson}

\subjclass{03G15, 03E20, 20A15}

\keywords{relation algebra, group, representable relation algebra,
measurable relation algebra, group relation algebra}

\begin{abstract}
Generalizing results of J\'onsson and Tarski, Maddux introduced the
notion of a \textit{pair-dense} relation algebra and proved that
every pair-dense relation algebra is representable. The notion of a
pair below the identity element is readily definable within the
equational framework of relation algebras.  The notion of a triple,
a quadruple, or more generally, an element of size (or measure)
$n>2$ is not definable within this framework, and therefore it seems
at first glance that Maddux's theorem cannot be generalized. It
turns out, however, that a very far-reaching generalization of
Maddux's result is possible if one is willing to go outside of the
equational framework of relation algebras, and work instead within
the  framework of the first-order theory. Moreover, this
generalization sheds a great deal of light not only on Maddux's
theorem, but on the earlier results of J\'onsson and Tarski.

In the present paper, we define the notion of an atom below the
identity element in a relation algebra having measure $n$ for an
arbitrary cardinal number $n>0$, and we define a relation algebra to
be \textit{measurable} if it's identity element is the sum of atoms
each of which has some (finite or infinite) measure. The main
purpose of the present paper is to construct a large class of new
examples of \textit{group relation algebras} using systems of groups
and corresponding systems of quotient isomorphisms (instead of the
classic example of using a single group and forming its complex
algebra), and to prove that each of these algebras is an example of
a measurable set relation algebra. In a subsequent paper, the class
of examples will be greatly expanded by adding a third ingredient to
the mix, namely systems of ``shifting" cosets. The expanded class of
examples---called \textit{coset relation algebras}---will be large
enough to prove a representation theorem saying that every atomic,
measurable relation algebra is essentially isomorphic to a coset
relation algebra.
\end{abstract}

\maketitle

%%%%%%%%%%%%%%%%%%%%%%%%%%%%%%%%%%%%%%%%%%%%%%%%%%%%%%%%%%%%%%%%%%%%%%
%% MAIN MATTER
%%%%%%%%%%%%%%%%%%%%%%%%%%%%%%%%%%%%%%%%%%%%%%%%%%%%%%%%%%%%%%%%%%%%%%

\section{Introduction}\label{S:1}

The calculus of  relations was created by De\,Morgan\,\cite{dm},
 Peirce (see, for example, \cite{pe}), and Schr\"oder\,\cite{sc}
in the second half of the nineteenth century. It was intended as an
algebraic theory of binary relations analogous in spirit to Boole's
algebraic theory of classes, and much of the early work in the
theory consisted of a clarification of some of the important
operations on and to binary relations and a study of the laws that
hold for these operations on binary relations.

It was Peirce\,\cite{pe} who ultimately determined the list of
fundamental operations, namely the Boolean operations on and between
binary relations (on a \textit{base set} $U$) of forming (binary)
unions, intersections, and (unary) complements (with respect to the
universal binary relation $U\times U$); and relative operations of
forming the (binary) relational composition---or relative
product---of two relations $R$ and $S$ (a version of functional
composition),
\[R\mathbin{\vert} S=\{\pair \alpha\beta: \pair \alpha\gamma\in R\text{ and
}\pair\gamma\beta\in S\text{ for some $\gamma$ in $U$}\};
\]  a
dual (binary) operation of relational addition---or forming the
relative sum---of $R$ and $S$,
\[R\mathbin{\dag} S=\{\pair \alpha\beta: \pair \alpha\gamma\in R\text{ or
}\pair\gamma\beta\in S\text{ for all $\gamma$ in $U$}\};
\]
and a unary op\-er\-a\-tion of rela\-tion\-al in\-verse (a ver\-sion
of func\-tion\-al in\-ver\-sion)---or forming the converse---of $R$,
\[R\mo=\{\pair \beta\alpha:\pair\alpha\beta\in R\}\per
\]  He also specified some distinguished relations on the set $U$: the empty
relation $\varnot$, the universal relation $U\times U$, the identity
relation
\[\id
U=\{\pair\alpha\alpha:\alpha\in U\},\] and its complement the
diversity relation
\[\diverse
U=\{\pair\alpha\beta:\alpha,\beta\in U\text{ and }\alpha\neq
\beta\}\per\]

Tarski, starting with\,\cite{t41}, gave an abstract algebraic
formulation of the theory.  As several of Peirce's operations are
definable in terms of the remaining ones, he reduced the number of
primitive operations to the Boolean operations of addition $\,+\,$
and complement $\,-\,$, and the relative operations of relative
multiplication $\,;\,$ and converse $\,\conv\,$, with an identity
element $\ident$ as the unique distinguished constant.  Thus, the
models for his set of axioms are algebras of the form
\[\f A=( A\smbcomma +\smbcomma
-\smbcomma ;\smbcomma\,\conv\smbcomma\ident)\comma  \]  where $A$ is
a non-empty set called the \textit{universe} of $\f A$, while
$\,+\,$ and $\,;\,$ are binary operations called \textit{addition}
and \textit{relative multiplication},
 $\,-\,$ and $\,\conv\,$ are unary operations called \textit{complement}
and \textit{converse}, and $\ident$ is a distinguished constant
called the \textit{identity element}.  He defined a relation algebra
to be any algebra of this form in which a set of ten equational
axioms is true. These ten axioms are true in any set relation
algebra, and the set-theoretic versions of three of them    play a
small role in this paper, namely the associative law for relational
composition, and first and second involution laws for relational
converse:\[ R\rp(S\rp T)=(R\rp S)\rp T,\qquad
(R\mo)\mo=R\comma\qquad (R\rp S)\mo=S\mo\rp R\mo \per\]

Tarski raised the problem whether all relation algebras---all models
of his axioms---are representable in the sense that they are
isomorphic to   set relation algebras, that is to say, they are
isomorphic to subalgebras of (full) set relation algebras
\[\full E=( \SB E\smbcomma \cup\smbcomma \sim\smbcomma
\,\rp\,\smbcomma\,{}\mo\smbcomma\id U)\] in which the universe $\SB
E$ consists of all subrelations of some equivalence relation $E$ on
a base set $U$, and the operations are the standard set-theoretic
ones defined above, except that complements are formed with respect
to $E$ (which may or may not be the universal relation $U\times U$).
Tarski and J\'onsson\,\cite{jt52} proved several positive
representation theorems  for classes of relation algebras with
special properties.  However, a negative solution to the general
problem was ultimately given by Lyndon\,\cite{lyn1}, who constructed
an example of a finite relation algebra that possesses no
representation at all. Since then, quite a number of papers have
appeared in which representation theorems for various special
classes of relation algebras have been established, or else new
examples of non-representable relation algebras have been
constructed. In particular, Maddux\,\cite{ma91}, generalizing
earlier theorems of J\'onsson-Tarski\,\cite{jt52}, defined the
notion of a pair-dense relation algebra, and proved that every
pair-dense relation algebra---every relation algebra in which the
identity element is a sum of ``pairs", or what might be called
singleton and doubleton elements---is representable.

In trying to generalize Maddux's theorem, a problem arises. The
property of being a pair below the identity element $\ident$ is
naturally expressible in the equational language of relation
algebras. Generally speaking, however, the size of an element below
the identity element---even for small sizes like $3$,  $4$ or
$5$---is \textit{not} expressible equationally.  To overcome this
difficulty  another way must be found of expressing  size, using the
first-order language of relation algebras. This leads to the notion
of a measurable atom.

 For an element $x$ below  the identity element---a
\textit{subidentity element}---the \textit{square} on $x$ (or the
\textit{square with side} $x$) is defined to be the element $x;1;x$.
In set relation algebras with unit $E=U\times U$, such squares are
just Cartesian squares, that is to say, they are relations of the
form $X\times X$ for some subset $ X$ of the base set $U$.  A
\textit{subidentity atom} $x$ is said to be \textit{measurable} if
its square $x;1;x$ is the sum (or the supremum) of a set of non-zero
functional elements, and the number of non-zero functional elements
in this set is called the \textit{measure}, or the \textit{size}, of
the atom $x$. If the set is finite, then the atom is said to have
\textit{finite measure}, or to be \textit{finitely measurable}. The
name comes from the fact that, for set relation algebras in which
the unit $E$ is the universal relation $U\times U$, the number of
non-zero
 functional elements beneath the square on a subidentity atom is
 precisely the same as the number of pairs of elements that belong to that atom. For
 instance, in such an algebra, a subidentity atom consists of a
 single ordered pair just in case its square is a function; it
 consists of two ordered pairs just in case its square is the sum
 of two non-empty functions; it consists of three ordered pairs
 just in case its square is the sum of three non-empty functions;
 and so on.

 In fact, the atoms below the square $x;1;x$ of a measurable
 subidentity atom $x$ may be thought of as ``permutations" of $x$,
 and they form a group $\cs Gx$ under the restricted operations of
 relative multiplication and  converse, with $x$ as the identity
 element of the group. Moreover, the set of atoms below an
 arbitrary rectangle $x;1;y$ (with $x$ and $y$ measurable atoms)
 also form a group, one that is isomorphic to a quotient of $\cs
 Gx$.

A relation algebra  is defined to be \textit{measurable} if its
identity element is the sum of a set of measurable atoms.  If each
of the atoms in this set is in fact finitely measurable, then the
algebra is said to be \textit{finitely measurable}.   The pair-dense
relation algebras of Maddux are finitely measurable, and in fact
each subidentity atom has measure one  or two.  The
 purpose of this paper and  \cite{ag} is to construct two classes of  measurable
  relation algebras: the class of group relation algebras, which is constructed
in this paper; and the broader class of coset relation algebras,
which  is constructed in \cite{ag} and  whose construction depends
on the construction of group relation algebras and the results in
this paper.    In \cite{ga},   an analysis of atomic, measurable
relation algebras is carried out, and it is proved that every
atomic, measurable relation algebra is essentially isomorphic to a
coset relation algebra. If the given algebra is actually finitely
measurable, then the assumption of it being atomic is unnecessary.
The results were   announced without proofs in \cite{ga02}. Except
for basic facts about groups, the article is intended to be
self-contained.  For more information about relation algebras, the
reader may consult \cite{ga17},  \cite{ga18},  \cite{hh02}, or
\cite{ma06}.

%%%%%%%%%%%%%%%%%%%%%%%%%%%%%%%%%%%%%%%%%%%%%%%%%%%%%%%%%%%%%%%%%%%%%%

\section{Complex algebras of groups}\label{S:2}

In the 1940's, J. C. C. McKinsey observed that the complex algebra
of a group is a \ra. Specifically,  let  $\langle G\smbcomma
\scir\smbcomma\mo\smbcomma e\rangle$ be a group and $\SB G$ the
collection of all subsets, or \textit{complexes}, of $G$. The group
operations of multiplication (or  composition) and inverse can be
extended to operations on complexes in the obvious way:
\[ H\scir K=\{ h\scir k : h\in H \text{ and } k\in
K\}\] and
\[H\mo= \{ h\mo : h\in H\}.
\]
In order to simplify notation, we shall often identify elements with
their singletons, writing, for example, $g\scir H$ for $\{ g\}\scir
H$, so that
\[ g\scir H =\{ g\scir h : h\in H\}.
\]

The collection $\SB G$ of complexes contains the singleton set $\{
e\}$ and is closed under the Boolean operations of union and
complement, as well as under the  group operations of complex
multiplication and inverse. Thus, it is permissible to form the
algebra
\[
\gcm{G}= \langle
 \SB{G}\smbcomma \cup\smbcomma\sim
\smbcomma\scir\smbcomma\mo\smbcomma\{ e\}\rangle,
\] and it is easy to check that this is a relation algebra.  In
fact, it is representable via a slight modification of the Cayley
representation of the group. In more detail, for each element $g$ in
$G$, let $\rr g$ be the binary relation on $G$ defined by
\[\rr g=
\{\pair h {h\scir g}:h\in G)\}\per\]  The correspondence
$g\longmapsto\rr g$ is a slightly modified version of the Cayley
representation of $G$ as a group of permutations in which the
operation of relational composition is used instead of functional
composition. In particular,
\begin{alignat*}{3}
\rr g&=\idd G&&\qquad\text{if and only if}\qquad &g &= e,\\
\rr g\mo&=\rr k&&\qquad\text{if and only if}\qquad &g\mo &= k,\\ \rr
f\rp\rr g&=\rr k&&\qquad\text{if and only if}\qquad &f\scir g &=k.
\end{alignat*}
For each subset $X$ of  $G$, write
\[S_X=\bbigcup {g\in X}\rr g,\] and take $A$ to be
 the set of all  relations $S_X$ for $X\seq G$.  Using
the properties of the relations $\rr g$ displayed above, and also
the complete distributivity of the operations of relational
composition and converse over unions, it is a simple matter to check
that $A$ is a subuniverse of  the set relation algebra $\full E$
with $E$ the universal relation on the set $G$,   so that the
correspondence mapping each set $X$ to the relation $\cs SX$ is an
embedding of $\com G $ into $\full E$. We shall call this mapping
the \textit{Cayley representation} of $\gcm G$.

There is a natural extension of the Cayley representation of a group
$G$ to a representation of a quotient group $G/H$. If
$\langle\h\g:\g<\kappa\rangle$ is a coset system for a normal
subgroup $H$ of $G$, then define the representative of a coset
$\h\lph$ to be the binary relation
\[\rr \lph=\bbigcup
{\g<\ka}\h\g\times(\h\g\scir\h\lph).\] (To minimize the number of
parentheses that are used, we  adopt here and everywhere below the
standard convention that multiplications---in this case Cartesian
products---take precedence over additions---in this case, unions.)
Notice that, strictly speaking, $\rr\lph$ is not the Cayley
representation of $\h\lph$, which is the set of ordered pairs
\[\{\pair{\h\g}{\h\g\scir\h\lph}:\g<\kappa\}\per\]

The notion of a relation representing a coset can be taken one step
further. If $\varphi$ is an isomorphism from a quotient group $G/H$
to another quotient group~$F/K$, then~$F/K$ is identical with $G/H$
except for the ``shape" of its elements, and therefore it makes
sense to identify each coset $\h\g$ in $G/H$ with its image
$\varphi(\h\g)=K_{\g}$ in~$F/K$.  One can then take the
representative of a coset $\h\lph$ to be the relation
\[\rr \lph=\bbigcup
{\g<\ka}\h\g\times\varphi(\h\g\scir\h\lph) =\bbigcup
{\g<\ka}\h\g\times(K_{\g}\scir K_{\lph}).\]  Notice that each
relation $\rr\lph$ is a union of rectangles, that is to say, it is a
union of relations of the form $X\times Y$, and these rectangles are
mutually disjoint, because the cosets $\h\g$ are mutually disjoint,
as are the cosets $K_{\g}\scir K_{\lph}$, for distinct $\g<\ka$.

 To illustrate this idea with a concrete example, consider the two
 groups   $\mathbb Z_6$ and~$\mathbb Z_9$ (the integers modulo $6$ and the integers modulo $9$), and the canonical
 isomorphism $\varphi$ between the quotients
\[\mathbb Z_6/\{0,3\}\qquad\text{and}\qquad \mathbb Z_9/\{0,3,6\}\]  that maps the cosets
\begin{gather*}
H_0 =\{0,3\} \quad\text{to}\quad  K_0 =\{0,3, 6\},\quad H_1
=\{1,4\}\quad\text{to}\quad K_1 =\{1,4, 7\},\\ H_2
=\{2,5\}\quad\text{to}\quad K_2 =\{2,5, 8\}.
\end{gather*}
Using this correspondence,  define three relations as follows:
\begin{align*}  R_0&=[H_0\times (K_0\scir K_0)]\cup[H_1\times
(K_1\scir K_0)]\cup[H_2\times (K_2\scir K_0)]\\ &=[H_0\times
K_0]\cup[H_1\times K_1]\cup[H_2\times K_2]\\ &=\{(a,b): a\in \mathbb
Z_6\text{\ ,\ }b\in \mathbb Z_9\text{ and } b\equiv a\text{ mod }
3\}\comma
\\ R_1&=[H_0\times (K_0\scir K_1)]\cup[H_1\times
(K_1\scir K_1)]\cup[H_2\times (K_2\scir K_1)]\\ &=[H_0\times
K_1]\cup[H_1\times K_2]\cup[H_2\times K_0]\\ &=\{(a,b): a\in \mathbb
Z_6\text{\ ,\ }b\in \mathbb Z_9\text{ and } b\equiv a+1\text{ mod }
3\}\comma\\ R_2&=[H_0\times (K_0\scir K_2)]\cup[H_1\times (K_1\scir
K_2)]\cup[H_2\times (K_2\scir
K_2)]\\ &=[H_0\times K_2]\cup[H_1\times K_0]\cup[H_2\times K_1]\\
&=\{(a,b): a\in \mathbb Z_6\text{\ ,\ }b\in \mathbb Z_9\text{ and }
b\equiv a+2\text{ mod } 3\}\per
\end{align*}
\begin{figure}[tbh]
\includegraphics*[scale=.8]{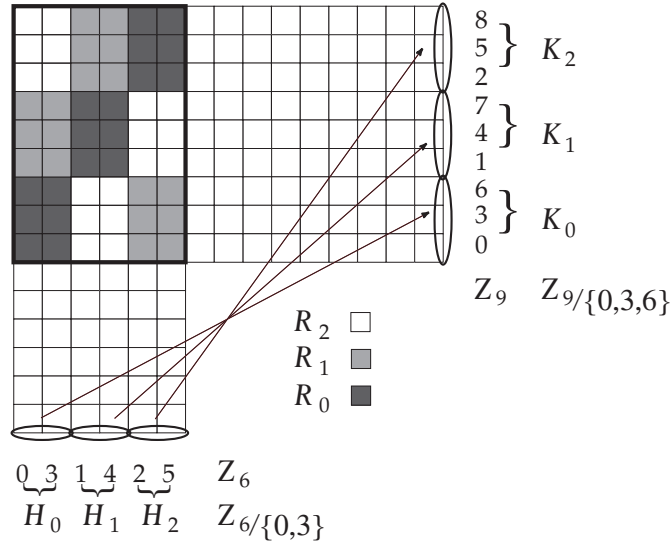}
\caption{The relations $R_0$, $R_1$, and $R_2$.}\label{F:fig0}
\end{figure}
(See Figure~\ref{F:fig0}.)  The relations $R_0$, $R_1$, and $R_2$
are
 \textit{representatives} of the cosets $H_0$, $H_1$,
and $H_2$ respectively, and together they give a kind of
representation of $\mathbb Z_3$ that has the flavor of the Cayley
representation of $\mathbb Z_3$. (Notice, however, that this is not
a real representation of $\mathbb Z_3$, since we cannot form the
composition of these relations.) This  is a key idea  in the
construction of measurable algebras of binary relations  from
\textit{systems}  of groups and quotient isomorphisms.

\section{Systems of groups and quotient isomorphisms}\label{S:3}

Fix a system
\[G=\langle
\G x:x\in I\,\rangle\] of  groups $\langle \G
x\smbcomma\scir\smbcomma\mo\smbcomma \e x\rangle$ that are pairwise
disjoint, and an associated system
\[\varphi=\langle\vph_{xy}:\pair x y\in \mc E\,\rangle\] of
quotient isomorphisms.  Specifically,   $\mc E$ is an equivalence
relation on the index set $I$, and for each pair $\pair x y$ in $\mc
E$, the function $\vphi {xy}$  is an isomorphism from a quotient
group of $\G x$ to a quotient group of $\G y$.  We shall call
\[\mc F=\pair G \varphi\]  a \textit{group pair}.  The set    $I$ is
the \textit{group index set},  and the equivalence relation $\mc E$
is the (\textit{quotient}) \textit{isomorphism index set}, of $\mc
F$. The normal subgroups of $\G x$ and $\G y$ from which the
quotient groups are constructed are uniquely determined by
 $\vphi\xy$, and will be denoted  by $\h\xy$ and $K_{xy}$
respectively, so that $\vphi\xy$ maps $\gxhi$ isomorphically onto
$\gyki$.

For a fixed enumeration $\langle \hs\wi \g:\g<\kai\wi\rangle$
(without repetitions) of the cosets of $\h\wi$ in $\G x$ (indexed by
some ordinal number $\kai\xy$), the isomorphism $\vphi\wi$ induces a
\textit{corresponding}, or \textit{associated},  coset system
 of $K_{\wi}$ in $\G y$,  determined by the rule
\[\ks\wi \g=\vphi\wi(\hs\wi \g)\]
for each $\gm<\kai\wi$.
 In what follows we  shall always assume that the given coset
systems for $\h\wi$ in $\G x$ and for $K_{\wi}$ in $\G y$ are
associated in this manner. Furthermore, there is no loss of
generality in assuming that the first elements in the enumeration of
the coset systems are always the normal subgroups themselves, so
that
\[\hs\wi 0 =\h\wi\qquad\text{and}\qquad\ks\wi 0 =K_{\wi}.\]
\begin{definition} \label{D:compro}  For each pair $\pair
x y$  in $\mc E$ and each $\alpha <\kai\wi$, define a  binary
relation $R_{{\wi},{\alpha}}$ by\[ R_{{\wi},{\lph}}= \tbigcup_{\gm <
\kp_{\wi}} H_{\wi,\gm}\times \vphi\wi[H_{\wi,\gm}\scir
H_{\wi,\lph}]= \tbigcup_{\gm < \kp_{\wi}} H_{\wi,\gm}\times
(K_{\wi,\gm}\scir K_{\wi,\lph})\per\] \qeddef\end{definition}

The index $\lph$ enumerating the relations $R_{\xy,\lph}$ coincides
with the index enumerating the coset system for the subgroup
$\h\xy$, and therefore is dependent upon the particular, often
arbitrarily chosen, enumeration of the cosets.   It would be much
better if the index enumerating the relations were independent of
the particular coset system that has been employed. This can be
accomplished by using the cosets themselves as indices, writing, for
instance, for each coset $L$ of $\h\xy$, that is to say, for each
element $L$ in $\G x/\h\xy$,
\[R_{\xy,{L}}=\tbigcup\{H\times\vp(H\scir L):H\in\G x/\h\xy\}
\]
instead of $R_{\xy,\lph}$.  In fact, it is really our intention that
the relations be indexed by the cosets and not by the  indices of
the cosets.  However, adopting this notation in practice eventually
becomes notationally a bit unwieldy. For that reason, we shall
continue to use the coset indices $\lph$, but we view these only as
convenient abbreviations for the cosets themselves. In places where
the distinction is important, we shall point it out.

Notice that the relation $R_{\xy, 0}$ encodes the isomorphism
$\vphi\xy$.

 In  proofs, we shall use repeatedly the fact that
operations such as forward and inverse images of sets under
functions, Cartesian multiplication of sets, intersection of sets,
complex group composition, relational composition, and relational
converse are all distributive over arbitrary unions, and we shall
usually simply refer to this fact by citing \textit{distributivity}.

\begin{lemma}[Partition Lemma] \label{L:i-vi}  The
relations $R_{\wi, \alpha} $\comma for $\alpha <\kai\xy$\comma are
non-empty and partition the set $\gsq x y$.
\end{lemma}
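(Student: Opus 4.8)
The plan is to show three things: each $R_{\wi,\lph}$ is non-empty, any two of them are disjoint, and together they cover $\gsq x y$. Non-emptiness is immediate: the summand indexed by $\gm=0$ is $H_{\wi,0}\times(K_{\wi,0}\scir K_{\wi,\lph})$, which is non-empty since it is a product of two cosets, each non-empty.

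For the covering property, I would fix an arbitrary pair $\pair ab\in\gsq x y$. Since the cosets $\hs\wi\gm$ partition $\G x$, there is a unique $\gm<\kai\wi$ with $a\in \hs\wi\gm$. Since the cosets $\ks\wi\bt$, as $\bt$ ranges over $\kai\wi$, partition $\G y$, and since $K_{\wi,\gm}\scir K_{\wi,\lph}$ is, for each fixed $\gm$, one of these cosets (the product of two cosets of $K_\wi$ is again a coset of $K_\wi$, namely the one corresponding under $\vphi\wi$ to $\hs\wi\gm\scir\hs\wi\lph$), there is a unique $\lph$ with $b\in K_{\wi,\gm}\scir K_{\wi,\lph}$; concretely, $\lph$ is the index of the coset $(\hs\wi\gm)\mo\scir(\vphi\wi\mo(\ks\wi\bt))$ where $\bt$ is the index of the coset of $K_\wi$ containing $b$. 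Then $\pair ab\in H_{\wi,\gm}\times(K_{\wi,\gm}\scir K_{\wi,\lph})\seq R_{\wi,\lph}$, so the union of the $R_{\wi,\lph}$ is all of $\gsq x y$.

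For disjointness, suppose $\pair ab\in R_{\wi,\lph}\cap R_{\wi,\lph'}$. From $\pair ab\in R_{\wi,\lph}$ there is $\gm$ with $a\in \hs\wi\gm$ and $b\in K_{\wi,\gm}\scir K_{\wi,\lph}$; from $\pair ab\in R_{\wi,\lph'}$ there is $\gm'$ with $a\in\hs\wi{\gm'}$ and $b\in K_{\wi,\gm'}\scir K_{\wi,\lph'}$. Since $a$ lies in only one coset of $\h\wi$, we get $\gm=\gm'$. Then $b$ lies in the coset $K_{\wi,\gm}\scir K_{\wi,\lph}$ and also in $K_{\wi,\gm}\scir K_{\wi,\lph'}$; since both are cosets of $K_\wi$ and they share the element $b$, they are equal, so $K_{\wi,\gm}\scir K_{\wi,\lph}=K_{\wi,\gm}\scir K_{\wi,\lph'}$. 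Multiplying on the left by $(K_{\wi,\gm})\mo$ (i.e. cancelling in the quotient group $\gyki$) gives $K_{\wi,\lph}=K_{\wi,\lph'}$, and since $\vphi\wi$ is a bijection and the enumeration of the cosets is without repetitions, $\lph=\lph'$.

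The only mild obstacle is bookkeeping: one must be careful to distinguish the index $\lph$ from the coset it names, and to invoke at the right places that the product of two cosets of a normal subgroup is a coset, that distinct cosets are disjoint, and that $\vphi\wi$ carries the coset partition of $\G x/\h\wi$ bijectively onto that of $\G y/K_\wi$. All of these are routine facts about cosets and quotient groups, cited together with distributivity; no step is genuinely hard.
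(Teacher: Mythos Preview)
Your proof is correct and follows essentially the same approach as the paper's: establish non-emptiness from non-emptiness of cosets, show the union is $\gsq x y$ using that right multiplication by a fixed coset permutes the coset system, and show disjointness using that distinct cosets are disjoint together with cancellation in the quotient $\gyki$. The only difference is stylistic: the paper carries out each step via global set-theoretic distributivity computations (expanding $\tbigcup_\lph R_{\wi,\lph}$ and $R_{\wi,\lph}\cap R_{\wi,\bt}$ directly), whereas you chase an arbitrary element $\pair ab$; the underlying group-theoretic facts invoked are identical.
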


\begin{proof}
Obviously, the relations  are non-empty, because the cosets used to
construct them are non-empty.
 The sequence $\langle H_{\wi,\gm} : \gm < \kp_{\wi}\,\rangle$ is a
coset system for $\h\wi$ in  $G_\wx$, so these cosets are mutually
disjoint and have $\cs G\wx$ as their union. Similarly, the cosets
in the corresponding sequence $\langle K_{\wi,\gm} : \gm <
\kp_{\wi}\,\rangle$ are mutually disjoint and have $\cs G\wy$ as
their union. The sequence obtained by multiplying each $\ks\wi \g$
on the right by a fixed coset $\ks\wi \alpha$ lists the cosets of
$K_{\wi}$ in some permuted order.   These observations and the
distributivity of Cartesian multiplication yield
\begin{align*}
\tbigcup_\lph R_{{\wi},{\lph}} &= \tbigcup_\lph\tbigcup_\gm
H_{\wi,\gm}\times (K_{\wi,\gm}\scir K_{\wi,\lph})  =
\tbigcup_\gm\tbigcup_\lph H_{\wi,\gm}\times (K_{\wi,\gm}\scir
K_{\wi,\lph})\\ &= \tbigcup_\gm H_{\wi,\gm}\times
\bigl(\tbigcup_\lph K_{\wi,\gm}\scir K_{\wi,\lph}\bigr) =
\tbigcup_\gm H_{\wi,\gm}\times G_\wy\\  &= \bigl(\tbigcup_\gm
H_{\wi,\gm}\bigr)\times G_\wy = G_\wx\times G_\wy.
\end{align*}

The cosets $H_{\wi,\gm}$ and $H_{\wi,\dlt}$ are  disjoint whenever
$\g\neq\dlt$, and so are
  the cosets $K_{\wi,\lph}$ and $K_{\wi,\bt}$---and therefore also
the  cosets $K_{\wi,\dlt}\scir K_{\wi,\lph}$ and $K_{\wi,\gm}\scir
K_{\wi,\bt}$---whenever $\lph\neq\bt$. Consequently,
\begin{equation*}
[H_{\wi,\gm}\cap H_{\wi,\dlt}] \times [(K_{\wi,\gm}\scir
K_{\wi,\lph})\cap (K_{\wi,\dlt}\scir K_{\wi,\bt})]=\varnot\tag{1}
\end{equation*}
whenever $\g\neq\dlt$ or $\alpha \neq \beta$. For distinct
$\lph,\bt$, a simple computation leads to
\begin{align*}
%R_{\wi,\lph}\cap R_{\wi,\bt} &= \Bigl [\tbigcup_\gm H_{\wi,\gm}
%\times (K_{\wi,\gm}\scir K_{\wi,\lph})\Bigr]\cap \Bigl
%[\tbigcup_\dlt H_{\wi,\dlt} \times (K_{\wi,\dlt}\scir
%K_{\wi,\bt})\Bigr] \\ &= \tbigcup_{\gm,\dlt} R_{\wi,\lph}\cap
R_{\wi,\bt} &= \big[ \tbigcup_\gm H_{\wi,\gm} \times
(K_{\wi,\gm}\scir K_{\wi,\lph})\big]\cap \big[ \tbigcup_\dlt
H_{\wi,\dlt} \times (K_{\wi,\dlt}\scir K_{\wi,\bt})\big] \\ &=
\tbigcup_{\gm,\dlt}
 [H_{\wi,\gm} \times (K_{\wi,\gm}\scir K_{\wi,\lph})]\cap
[H_{\wi,\dlt} \times (K_{\wi,\dlt}\scir K_{\wi,\bt})] \\ &=
\tbigcup_{\gm,\dlt}[H_{\wi,\gm}\cap H_{\wi,\dlt}] \times
[(K_{\wi,\gm}\scir K_{\wi,\lph})\cap (K_{\wi,\dlt}\scir
K_{\wi,\bt})] \\ &=\varnot,
\end{align*}
by the definition of $R_{\wi, \alpha}$ and $R_{\wi, \beta}$, the
distributivity of  intersection  and Cartesian multiplication, and
(1).
\end{proof}

Let $U$ be the union of the disjoint system of groups,  and $E$ the
equivalence relation on $U$ induced by the isomorphism index set
$\mc E$, \[U=\tbigcup\{\cs Gx:x\in I\}\qquad\text{and}\qquad
E=\tbigcup\{\cs Gx\times\cs Gy:\pair xy\in \mc E\}\per\] Write
\[\mc I=\{ (\pair x y,\lph) : \pair x y\in \mc E \text{ and }\lph
< \kp_{\wi}\}
\] for the \textit{relation index set} of the group pair $\mc F$, that
is to say, for the set of indices of the relations $R_{\wi,
\alpha}$. For each subset $\mc X$ of   $\mc I$, define
\[
S_{\mc X}= \tbigcup \{ R_{{\wi},{\lph}} : ((x,y),\lph)\in {\mc X}\},
\] and let $A$ be the collection of all of the relations $\cs
S{\mc X}$ so defined.

\begin{theorem}[Boolean Algebra Theorem]\label{T:disj} The set $A$
is the universe of a complete and atomic Boolean algebra of subsets
of $E$\per The distinct elements in $A$ are the relations $\cs S{\mc
X}$ for distinct subsets $\mc X$ of $\mc I$\comma and the atoms are
the relations $R_{{\wi},{\lph}}$ for $(\pair x y,\lph)$ in $\mc
I$\per The unit is the relation $E=S_{\mc I}$, and the operations of
union, intersection, and complement in $A$ are determined by
\begin{equation*}\tbigcup_\xi S_{{\mc X}_\xi} = S_{\mc Y},\qquad \tbigcap_\xi S_{{\mc X}_\xi}
= S_{\mc Y},\qquad S_\mc I\setcomp S_{\mc X} =   S_{\mc Y}
\end{equation*} where ${\mc Y}=\tbigcup_\xi {\mc X}_\xi$ in the
first case\comma ${\mc Y}=\tbigcap_\xi {\mc X}_\xi$ in the second
case\comma and ${\mc Y}=\mc I\setdiff {\mc X}$ in the third case
\opar for any system  $(\cs{\mc X}\xi:\xi<\lambda)$ of subsets\comma
and  any subset $\mc X$\comma of $\mc I$\cpar\per
\end{theorem}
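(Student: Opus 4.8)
The plan is to reduce the entire statement to a single structural fact: the relations $R_{\wi,\lph}$, as $(\pair xy,\lph)$ ranges over the relation index set $\mc I$, form a partition of $E$ into non-empty blocks. Once this is in hand, the theorem is just the familiar observation that the ``sums of blocks'' of a partition form a complete, atomic field of sets that is isomorphic, via $\mc X\longmapsto S_{\mc X}$, to the full power set of the index set, and the three displayed formulas for union, intersection, and complement are precisely the translations of this isomorphism.

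First I would prove the partition claim. Non-emptiness of each $R_{\wi,\lph}$, and the fact that for each fixed $\pair xy$ in $\mc E$ the relations $R_{\wi,\lph}$ with $\lph<\kai\xy$ are pairwise disjoint with union $\gsq xy$, are exactly the content of the Partition Lemma. To these I would add the observation that the rectangles $\gsq xy$, for distinct pairs $\pair xy$ in $\mc E$, are themselves pairwise disjoint: this uses the standing assumption that the groups in the system $G$ are pairwise disjoint, since distinctness of $\pair xy$ and $\pair{x'}{y'}$ gives $\G x\cap\G{x'}=\varnot$ or $\G y\cap\G{y'}=\varnot$, and in either case $(\G x\times\G y)\cap(\G{x'}\times\G{y'})=\varnot$. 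Since $E=\tbigcup\{\gsq xy:\pair xy\in\mc E\}$, it follows that $\{R_{\wi,\lph}:(\pair xy,\lph)\in\mc I\}$ partitions $E$; in particular any two of these relations with distinct indices are disjoint, and every point of $E$ lies in exactly one of them.

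Next I would run through the consequences, which are all routine. Injectivity of $\mc X\longmapsto S_{\mc X}$: if $\mc X\ne\mc Y$, choose an index lying in one of them but not the other; the corresponding block is contained in one of $S_{\mc X},S_{\mc Y}$ and disjoint from the other --- being disjoint from every differently-indexed block, hence from any union of such --- and it is non-empty, so $S_{\mc X}\ne S_{\mc Y}$. The union formula $\tbigcup_\xi S_{{\mc X}_\xi}=S_{\mc Y}$ with $\mc Y=\tbigcup_\xi{\mc X}_\xi$ is merely commutativity and associativity of arbitrary union. For the intersection formula, ``$\supseteq$'' is monotonicity of $\mc X\longmapsto S_{\mc X}$, while for ``$\subseteq$'' a point $p\in\tbigcap_\xi S_{{\mc X}_\xi}$ lies in $E$, hence in a unique block $R_{\wi,\lph}$, and for each $\xi$ the membership $p\in S_{{\mc X}_\xi}$ together with disjointness of the blocks forces $(\pair xy,\lph)\in{\mc X}_\xi$; hence $(\pair xy,\lph)\in\tbigcap_\xi{\mc X}_\xi$ and $p\in S_{\tbigcap_\xi{\mc X}_\xi}$. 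For the complement formula, $S_{\mc I}=E$ because the blocks cover $E$, the union formula gives $S_{\mc X}\cup S_{\mc I\setdiff\mc X}=S_{\mc I}=E$, and $S_{\mc X}\cap S_{\mc I\setdiff\mc X}=\varnot$ since a common point would lie in two blocks with distinct indices; hence $S_{\mc I}\setcomp S_{\mc X}=S_{\mc I\setdiff\mc X}$.

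Finally I would assemble the conclusion. These facts show that $A$ contains $\varnot=S_\varnot$ and $E=S_{\mc I}$ and is closed under arbitrary union, arbitrary intersection, and complement relative to $E$, so $A$ is the universe of a complete Boolean algebra of subsets of $E$ with unit $E=S_{\mc I}$, and $\mc X\longmapsto S_{\mc X}$ is a complete Boolean isomorphism from the power set of $\mc I$ onto it. It is atomic with atoms exactly the relations $R_{\wi,\lph}=S_{\{(\pair xy,\lph)\}}$: each is non-empty, and if $\varnot\ne S_{\mc X}\seq R_{\wi,\lph}$ then the block of any index belonging to $\mc X$ is a non-empty subset of $R_{\wi,\lph}$ and hence equals it, forcing that index to be $(\pair xy,\lph)$ and so $\mc X=\{(\pair xy,\lph)\}$; conversely every non-empty $S_{\mc X}$ includes the atom $R_{\wi,\lph}$ for any index $(\pair xy,\lph)$ in $\mc X$. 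I do not expect a genuine obstacle here: the whole argument is bookkeeping resting on the Partition Lemma. The two points deserving care are that the set complement in the statement must be read relative to $E=S_{\mc I}$ and not to the full square $U\times U$, and that the disjointness of the rectangles $\gsq xy$ truly relies on the hypothesis that the groups $\G x$ are pairwise disjoint --- without that hypothesis the $R_{\wi,\lph}$ need not partition $E$ and the construction collapses.
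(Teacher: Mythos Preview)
Your proposal is correct and follows exactly the same approach as the paper: reduce everything to the fact that the relations $R_{\wi,\lph}$ partition $E$, which in turn follows from the Partition Lemma together with the observation that the rectangles $\gsq xy$ partition $E$. The paper's proof is a two-line sketch of precisely this argument, leaving the routine Boolean bookkeeping you spelled out as an exercise for the reader.
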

\begin{proof}
  The system of rectangles $\langle\cs Gx\times\cs Gy :\pair xy\in\mc
E\rangle$ is easily seen to be a partition of $E$.  Combine this
with Lemma~\ref{L:i-vi} and  the definition of the relations $\cs
S{\mc X}$ to arrive at the desired result. \end{proof}

Although the set $A$ is always a complete Boolean set algebra of
binary relations, it is not in general closed under the operations
of relational composition and converse, nor does it necessarily
contain the identity relation $\id U$ on the set $U$.  Such closure
depends on the properties of the quotient isomorphisms.  We begin by
characterizing when $A$ contains $\id U$.

\begin{theorem}[Identity Theorem]\label{T:identthm1} For each element $x$ in
$I$\co the following conditions are equivalent\per
\begin{enumerate}
\item[(i)]
The identity relation $\idd {\G x}$ on $\G\wx$ is in $A$\per
\item[(ii)] $R_{\xx, 0}=\idd {\G x}$\per
\item[(iii)]$\vphi\xx$ is the identity
automorphism of $\G\wx/\{\ex \wx\}$\po
\end{enumerate}
Consequently\co $A$ contains the identity relation $\id U$ on the
base set $U$ if and only if \textnormal{(iii)}  holds for each $\wx$
in $I$\po
\end{theorem}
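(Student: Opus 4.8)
The plan is to prove the cycle of implications (ii) $\Rightarrow$ (i) $\Rightarrow$ (iii) $\Rightarrow$ (ii), after which the final ``consequently'' assertion follows immediately, since $\id U$ is the disjoint union $\tbigcup\{\idd{\G x}:x\in I\}$ and, by the Boolean Algebra Theorem, a union of atoms of $A$ lies in $A$ exactly when each summand does.

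First I would dispose of the easy implication (ii) $\Rightarrow$ (i): the relation $R_{\xx,0}$ is one of the atoms $R_{\wi,\lph}$, hence by the Boolean Algebra Theorem it is a member of $A$, so if it equals $\idd{\G x}$ then $\idd{\G x}\in A$. For (iii) $\Rightarrow$ (ii), I would simply unwind Definition~\ref{D:compro} in the case $x=y$, $\lph=0$. Here $\h\xx=\{\ex\wx\}$ and $K_\xx=\{\ex\wx\}$, so the coset system of $\h\xx$ in $\G\wx$ is just the family of singletons $\{\hs\xx\gm\}=\{\{g\}:g\in\G\wx\}$, and likewise on the $y$-side; the hypothesis that $\vphi\xx$ is the identity automorphism of $\G\wx/\{\ex\wx\}$ means $\vphi\xx$ sends the singleton $\{g\}$ to itself, i.e.\ $\ks\xx\gm=\hs\xx\gm$. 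Plugging into the definition,
\[
R_{\xx,0}=\tbigcup_{\gm<\kp_\xx}H_{\xx,\gm}\times(K_{\xx,\gm}\scir K_{\xx,0})
=\tbigcup_{g\in\G\wx}\{g\}\times(\{g\}\scir\{\ex\wx\})
=\tbigcup_{g\in\G\wx}\{g\}\times\{g\}=\idd{\G x}\per
\]

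The substantive step is (i) $\Rightarrow$ (iii). Assume $\idd{\G x}\in A$; then $\idd{\G x}=S_{\mc X}$ for some $\mc X\seq\mc I$, and since $\idd{\G x}\seq\G\wx\times\G\wx$ while the atoms $R_{\wi,\lph}$ for $\pair xy\in\mc E$ partition $E$ into pieces contained in the various rectangles $\G\wx\times\G\wy$, only atoms of the form $R_{\xx,\lph}$ can appear; that is, $\idd{\G x}=\tbigcup\{R_{\xx,\lph}:\lph\in J\}$ for some index set $J\seq\kp_\xx$. Now each $R_{\xx,\lph}$ is nonempty and a union of rectangles $H_{\xx,\gm}\times(K_{\xx,\gm}\scir K_{\xx,\lph})$; for this union of rectangles to be contained in the diagonal $\idd{\G x}$, every such rectangle must be a subset of the diagonal, which forces each factor $K_{\xx,\gm}\scir K_{\xx,\lph}$ to be a singleton contained in $H_{\xx,\gm}$. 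A singleton coset of $K_\xx$ means $K_\xx=\{\ex\wx\}$; and, tracing back through $\vphi\xx$, also $\h\xx=\{\ex\wx\}$, so $\vphi\xx$ is an automorphism of the singleton-coset quotient $\G\wx/\{\ex\wx\}$, which we identify with $\G\wx$ itself. Finally, $\{\ks\xx\gm\scir\ks\xx\lph\}\seq\hs\xx\gm=\{\hs\xx\gm\}$ translated via $\vphi\xx$ says $\vphi\xx(g)\scir\vphi\xx(h)\in\{\,g\,\}$ whenever $\hs\xx\lph=\{h\}$; taking $\gm$ with $\hs\xx\gm=\{\ex\wx\}$ shows $\vphi\xx(h)=h$ for the unique coset realizing each $h$, i.e.\ $\vphi\xx$ is the identity. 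I expect the main obstacle to be bookkeeping: carefully justifying that only $R_{\xx,\lph}$-atoms occur in the representation of $\idd{\G x}$, and then extracting from ``a union of rectangles equals the diagonal'' the precise conclusions that both normal subgroups collapse to the trivial group and that $\vphi\xx$ fixes every element, rather than merely being some automorphism.
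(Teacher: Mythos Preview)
Your overall strategy---the cycle (ii)$\Rightarrow$(i)$\Rightarrow$(iii)$\Rightarrow$(ii) followed by the global statement---matches the paper's, and the easy implications (ii)$\Rightarrow$(i) and (iii)$\Rightarrow$(ii) are handled exactly as the paper does.

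There is, however, a genuine gap in your argument for (i)$\Rightarrow$(iii). From the inclusion $H_{\xx,\gm}\times(K_{\xx,\gm}\scir K_{\xx,\lph})\seq\idd{\G x}$ you extract only that the \emph{second} factor is a singleton contained in the first, and then try to recover $\h\xx=\{\ex\wx\}$ by ``tracing back through $\vphi\xx$'' from $K_\xx=\{\ex\wx\}$. That step fails for infinite groups: the existence of an isomorphism $\G\wx/\h\xx\cong\G\wx/\{\ex\wx\}\cong\G\wx$ does \emph{not} force $\h\xx$ to be trivial (take $\G\wx=\prod_{n\ge 1}\mathbb Z/2\mathbb Z$ and $\h\xx$ the first coordinate subgroup). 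The fix is already implicit in what you wrote: a nonempty rectangle $X\times Y$ contained in the diagonal forces \emph{both} factors to be the same singleton, so $H_{\xx,\gm}$ is itself a singleton for every $\gm$, and in particular $\h\xx=H_{\xx,0}=\{\ex\wx\}$ directly---no appeal to $\vphi\xx$ needed. This is precisely how the paper argues: take $\gm=0$, observe $H_{\xx,0}$ is a singleton subgroup hence $\{\ex\wx\}$, and simultaneously $K_{\xx,\lph}=K_{\xx,0}\scir K_{\xx,\lph}=\{\ex\wx\}$, which forces $\lph=0$; then for general $\gm$ the equality $H_{\xx,\gm}=K_{\xx,\gm}$ of singletons shows $\vphi\xx$ is the identity.

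Two smaller points. Your final computation (``taking $\gm$ with $\hs\xx\gm=\{\ex\wx\}$ shows $\vphi\xx(h)=h$'') has the roles of $\gm$ and $\lph$ tangled; what that substitution actually yields is $\vphi\xx(h)=\ex\wx$, hence $h=\ex\wx$ and $\lph=0$, after which the identity conclusion follows for every $g$. And in the ``consequently'' clause, the summands $\idd{\G x}$ are not a priori atoms of $A$; the correct justification (which the paper gives) is that $\G\wx\times\G\wx\in A$, so $\idd{\G x}=\id U\cap(\G\wx\times\G\wx)$ lies in $A$ whenever $\id U$ does, by closure under intersection.
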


\begin{proof}   Suppose  (i) holds, with the intention of deriving  (iii).
From the assumption in (i), and the definition of the set $A$, it is
clear that $\idd {\G x}$ must be a (non-empty) union of some of the
relations  $R_{\wj,\alpha}$. Each relation $R_{\wj,\alpha}$ in such
a union is a subset of the rectangle $\G y\times\G z$, by Partition
Lemma~\ref{L:i-vi},  and it is simultaneously a subset of the square
$\G x\times \G x$, because $\idd {\G x}$ is  a subset of  $\G
x\times\G x$.  The rectangle and the square are disjoint  whenever
$x\neq y$ or $x\neq z$, so $x=y=z$, and therefore
\begin{equation*}\tag{1}\label{Eq:idt1.1}
\tbigcup_{\g}\hs\xx \g\times (\ks\xx \g\scir\ks\xx \alpha)=
R_{\xx,\alpha} \seq \idd {\G x}= \tbigcup \{\pair gg:g\in \G
x\}\comma
\end{equation*} by the definitions of $R_{\xx,\alpha}$ and $\idd {\G
x}$. This inclusion implies that  the cosets $\hs \xx\g$ and $\ks\xx
\g\scir\ks\xx \alpha$ on the left side of \eqref{Eq:idt1.1} contain
exactly one element each, and this element  is the same for both
cosets, for if this were not the case, then the  Cartesian product
of the two cosets would contain a pair of the form $\pair gh$ with
$g\neq h$, in contradiction to \eqref{Eq:idt1.1}.  Thus, for each
$\gm<\kai\xx$, there is an element $g$ in $\G x$ such that
\begin{equation*}\tag{2}\label{Eq:idt1.2}
 \hs\xx \gm = \{g\}\qquad\text{and}\qquad \ks\xx
\g\scir\ks\xx \alpha = \{g\}\per
\end{equation*}

Take $\gm=0$ in \eqref{Eq:idt1.2}, and   apply  the convention that
$\hs\xx 0$ and $\ks\xx 0$ coincide with the subgroups $\h\xx$ and
$K_{\xx}$ respectively;  these subgroups are the identity cosets of
the quotient groups $\G x/\h\xy$ and $\G y/K_{\xy}$, so
\begin{equation*}\tag{3}\label{Eq:idt1.4}
\h\xx= \hs\xx 0 = \{g\}\qquad\text{and}\qquad \ks\xx\alpha=\ks\xx
0\scir\ks\xx \alpha = \{g\}\per
\end{equation*}
By assumption $\h\xx$ is a normal subgroup of $\G x$.  The only
normal subgroup that has exactly one element is the trivial subgroup
$\{e_x\}$, so the element $g$  in \eqref{Eq:idt1.4} must coincide
with $e_x$.  Use the right side of \eqref{Eq:idt1.4} with $g=e_x$ to
see that $\alpha$ must be $0$.

 Invoke \eqref{Eq:idt1.2} one more time to obtain, for each $\g<\kai\xx$,   an element $g$ in $\G x$ such
 that   \begin{equation*}\tag{4}\label{Eq:idt1.3} \hs\xx \g
=\{g\}=\ks\xx \g\scir\ks\xx \alpha=\ks\xx \g\scir\ks\xx 0=\ks\xx
\g\per
\end{equation*}
The isomorphism $\vphi\xx$  is assumed to map  $\hs\xx \g$ to
$\ks\xx \g$ for each $\g$, so \eqref{Eq:idt1.3} shows that $\vphi
\xx$ maps each singleton $\{g\}$ to itself. It follows that
$\vphi\xx$ is the identity isomorphism on $\G x/\{\e x\}$. Thus,
(iii) holds.

If (iii) holds, then \[R_{\xx, 0}=\tbigcup
\{\{g\}\times(\{g\}\scir\{ e_x\}):g\in\G x\}=\{\pair gg:g\in \G
x\}=\id{\cs Gx}\comma\] by the definition of $R_{\xx,0}$, so (ii)
holds. On the other hand, if (ii) holds, then (i) obviously holds,
by the definition of $A$.

 To derive the final assertion of the theorem, assume first that
(iii) holds. The identity relation $\idd {\G x}$ is then in $A$, by
(i). The union, over all $x$, of these identity relations is the
identity relation $\id U$. Since $A$ is closed under arbitrary
unions, it follows that $\id U$ is in $A$.

 Now assume that $\id U$
is in $A$. The squares $\cs Gx\times \cs Gx$ are all in $A$, by
Lemma~\ref{L:i-vi} and the definition of $A$, so the intersection of
each of these squares with $\id U$ is in $A$, by the closure of $A$
under intersection. This intersection
 is just $\idd {\G x}$, so (i) holds, and therefore also (iii),
for each $x$.
\end{proof}

In order to prove the next two theorems, it is convenient to
formulate two lemmas that will be used in both proofs.

\begin{lemma} \label{L:rect2} Suppose that each of  \[\langle
M_\lph:\lph<\kappa\rangle\comma\qquad \langle
N_\lph:\lph<\kappa\rangle\comma\qquad \langle
P_\bt:\bt<\lambda\rangle\comma\qquad\langle
Q_\bt:\bt<\lambda\rangle\] are sequences of non-empty\comma pairwise
disjoint sets\po  If
\begin{enumerate}
\item[(i)]    $\bbigcup {\lph<\ka}M_\lph\times N_\lph\seq
\bbigcup {\bt<\la}P_\bt\times Q_\bt$\rrcomma
\end{enumerate} then there is a
uniquely determined mapping $\vth$ from $\ka$ into $\la$ such that
\begin{enumerate}\item[(ii)] $M_\lph\seq P_{\vth(\lph)}\qquad
\text{and}\qquad N_\lph\seq Q_{\vth(\lph)}$\end{enumerate}
 for each $\lph<\ka$\po If equality holds in \textnormal{(i)}\comma
then equality holds in \textnormal{(ii)}\comma and $\vth$ is a
bijection\per
\end{lemma}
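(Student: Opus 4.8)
The plan is to prove \Cref{L:rect2} by a direct set-theoretic argument, exploiting the fact that the sets $M_\lph$ are non-empty and pairwise disjoint, and similarly for the other three families. First I would fix $\lph<\ka$ and pick an element $a\in M_\lph$ and an element $b\in N_\lph$, both of which exist by non-emptiness. Then the ordered pair $\pair ab$ lies in $M_\lph\times N_\lph$, hence in the left side of (i), hence by the inclusion in the right side; so there is some $\bt<\la$ with $\pair ab\in P_\bt\times Q_\bt$, i.e.\ $a\in P_\bt$ and $b\in Q_\bt$. Since the $P_\bt$ are pairwise disjoint, this $\bt$ is the unique index with $a\in P_\bt$; I define $\vth(\lph)=\bt$. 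The key point to check is that $\vth$ is well defined, i.e.\ that the value $\bt$ does not depend on the choice of $a$ (or $b$): if $a'\in M_\lph$ is another element, then $\pair {a'}b\in M_\lph\times N_\lph$ forces $a'\in P_{\bt'}$ and $b\in Q_{\bt'}$ for some $\bt'$; but $b\in Q_\bt\cap Q_{\bt'}$ and the $Q$'s are disjoint, so $\bt'=\bt$, and hence $a'\in P_\bt$ as well. Thus every element of $M_\lph$ lies in $P_\bt$, giving $M_\lph\seq P_{\vth(\lph)}$; the symmetric argument (fixing $a\in M_\lph$ and varying the second coordinate over $N_\lph$) gives $N_\lph\seq Q_{\vth(\lph)}$, which is (ii). Uniqueness of $\vth$ is immediate: any map satisfying (ii) must send $\lph$ to the unique $\bt$ with $P_\bt\supseteq M_\lph\neq\varnot$.

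For the last sentence, suppose equality holds in (i). I would first upgrade (ii) to equalities. Given $\lph<\ka$, I must show $P_{\vth(\lph)}\seq M_\lph$ and $Q_{\vth(\lph)}\seq N_\lph$. Take $c\in P_{\vth(\lph)}$ and $d\in Q_{\vth(\lph)}$; then $\pair cd$ lies in the right side of (i), hence (by equality) in the left side, so $\pair cd\in M_{\lph'}\times N_{\lph'}$ for some $\lph'$. Pick any $a\in M_\lph$: then $\pair ad\in M_\lph\times N_{\lph'}$... wait, that pair need not be in the union. Instead I argue: $c\in P_{\vth(\lph)}$ and $c\in M_{\lph'}\seq P_{\vth(\lph')}$, and disjointness of the $P$'s forces $\vth(\lph')=\vth(\lph)$; but $\vth$ will be shown injective (see below), so $\lph'=\lph$, whence $c\in M_\lph$ and $d\in N_\lph$. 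This gives $P_{\vth(\lph)}\seq M_\lph$ and $Q_{\vth(\lph)}\seq N_\lph$, so equality holds in (ii).

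It remains to see that $\vth$ is a bijection when equality holds in (i). Injectivity: if $\vth(\lph_1)=\vth(\lph_2)=\bt$, then $M_{\lph_1}$ and $M_{\lph_2}$ are both non-empty subsets of $P_\bt$; picking $a\in M_{\lph_1}$ and $b\in N_{\lph_1}$, and similarly $a'\in M_{\lph_2}$, $b'\in N_{\lph_2}$, gives... the cleanest route: from the already-established equalities $M_{\lph_1}=P_\bt=M_{\lph_2}$, and since the $M_\lph$ are pairwise disjoint and non-empty, $\lph_1=\lph_2$. Surjectivity: given $\bt<\la$, pick $c\in P_\bt$ and $d\in Q_\bt$; then $\pair cd$ is in the right side, hence (by equality) in the left side, so $c\in M_\lph$ for some $\lph$, and then $c\in M_\lph\seq P_{\vth(\lph)}$ together with disjointness of the $P$'s gives $\vth(\lph)=\bt$. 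Hence $\vth$ is onto, and the proof is complete. The main obstacle is a logical-ordering subtlety rather than a computational one: establishing equality in (ii) and injectivity of $\vth$ are mutually entangled, so I would first prove $\vth$ injective directly from disjointness of the $M$'s and the inclusion (ii) already proved, then deduce the reverse inclusions in (ii), then finish with surjectivity; I would lay out these four steps in exactly that order to avoid circularity.
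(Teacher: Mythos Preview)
Your element-chasing argument for the first part (existence and uniqueness of $\vartheta$ under the inclusion hypothesis) is correct and is essentially equivalent to, though more direct than, the paper's approach; the paper instead first isolates the special case of a single rectangle $M\times N$ on the left, reduces to that by intersecting, and then applies it to each $M_\alpha\times N_\alpha$.

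For the equality case your route is genuinely different from the paper's, and there is a small gap in your plan. The paper exploits the symmetry of the equality: it applies the already-proved inclusion result in the reverse direction to obtain a map $\psi:\lambda\to\kappa$ with $P_\beta\subseteq M_{\psi(\beta)}$ and $Q_\beta\subseteq N_{\psi(\beta)}$, and then the chain $M_\alpha\subseteq P_{\vartheta(\alpha)}\subseteq M_{\psi(\vartheta(\alpha))}$ together with disjointness of the $M_\alpha$ forces $\psi\scir\vartheta=\mathrm{id}$ (and symmetrically $\vartheta\scir\psi=\mathrm{id}$); bijectivity and the equalities in (ii) fall out simultaneously with no circularity to manage. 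Your plan instead is to prove injectivity of $\vartheta$ first and then bootstrap; that works, but your stated justification ``from disjointness of the $M$'s and the inclusion (ii) already proved'' is not sufficient---two disjoint non-empty sets $M_{\alpha_1},M_{\alpha_2}$ can perfectly well both sit inside the same $P_\beta$. You must use the equality in (i) and also the disjointness of the $N$'s: if $\vartheta(\alpha_1)=\vartheta(\alpha_2)=\beta$, pick $a\in M_{\alpha_1}$ and $b\in N_{\alpha_2}$; then $(a,b)\in P_\beta\times Q_\beta$, so by equality $(a,b)\in M_{\alpha'}\times N_{\alpha'}$ for some $\alpha'$; disjointness of the $M$'s gives $\alpha'=\alpha_1$, and disjointness of the $N$'s gives $\alpha'=\alpha_2$, whence $\alpha_1=\alpha_2$. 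Once you make that step explicit your ordering (injectivity, then reverse inclusions via injectivity, then surjectivity) goes through cleanly.
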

\begin{proof} Consider, first, arbitrary non-empty sets $M$ and $N$.  Assume \begin{equation*}\tag{1}\label{Eq:rect1.1}
M\times N=\tbigcup_{\bt<\la} P_\bt\times Q_\bt\comma \end{equation*}
with the intention of proving that   $\la=1$  (recall that $\la$ is
an ordinal), and \begin{equation*}\tag{2}\label{Eq:rect1.2}M=
P_0\qquad  \text{and}\qquad N= Q_0\per\end{equation*} It is obvious
from \eqref{Eq:rect1.1} that $P_\bt\times Q_\bt\seq M\times N$, and
therefore $P_\bt\seq M$ and $Q_\bt\seq N$,  for each $\bt<\la$.
Consequently,
\begin{equation*}\tag{3}\label{Eq:rect1.3}
  \bbigcup {\bt<\la} P_\bt\seq M\qquad\text{and}\qquad
\bbigcup {\bt<\la} Q_\bt\seq N\per
\end{equation*}  Use the distributivity of Cartesian multiplication, \eqref{Eq:rect1.3}, and \eqref{Eq:rect1.1} to obtain
\begin{equation*} \bbigcup {\lph,\bt<\la}  P_\lph\times Q_\bt
=\bigl(\bbigcup {\bt<\la} P_\bt\bigr)\times \bigl(\bbigcup {\bt<\la}
Q_\bt\bigr)  \seq M\times N = \bbigcup {\g <\la} P_\g\times Q_\g\per
\tag{4}\label{E:E1}\end{equation*} The inclusion of the first union
in the last one in \ref{E:E1} implies that every pair $\pair gh$ in
a rectangle $P_\bt\times Q_\beta$ must belong to some rectangle
$P_\g\times Q_{\g}$.  This cannot happen if $\alpha \neq \beta$,
because in such a case either $\alpha\neq\g$ or $\beta\neq\g$, and
therefore either $P_{\alpha}$ must be disjoint from $P_\g$, or else
$Q_\beta$ must be disjoint from $Q_\g$.  It follows that there is
exactly one $\beta$ that is less than $\la$. Since $\la $ is assumed
to be an ordinal, this forces $\la=1$ and $\beta=0$. Thus,
\eqref{Eq:rect1.1} assumes the form
\[M\times N=P_0\times Q_0\comma\] and clearly, \eqref{Eq:rect1.2} holds in this case.

Next, suppose that the equality in \eqref{Eq:rect1.1} is replaced
with set-theoretic inclusion, so that
\begin{equation*}\tag{5}\label{Eq:rect1.5}
M\times N\seq\tbigcup_{\bt<\la} P_\bt\times Q_\bt\per\end{equation*}
There is then  a unique index $\bt<\la$  such that
\begin{equation*}\tag{6}\label{Eq:rect1.6}M\seq P_\bt\qquad
\text{and}\qquad N\seq Q_\bt\per\end{equation*} For the proof, form
the intersection of both sides of \eqref{Eq:rect1.5} with $M\times
N$, and use \eqref{Eq:rect1.5},  the distributivity of intersection,
and simple set theory to obtain
\begin{multline*}\tag{7}\label{Eq:rect1.7}
 M\times N=(M\times N)\cap(M\times N)=(M\times N)\cap[\bbigcup {\bt<\la}  (P_\bt \times
Q_\bt)]\\
\bbigcup {\bt<\la} [(M\times N)\cap (P_\bt \times Q_\bt)]=\bbigcup
{\bt<\la} (M\cap P_\bt)\times (N\cap Q_\bt) \per\end{multline*}
 Drop all terms in the union on the right side of \eqref{Eq:rect1.7} that are empty.
 The equality of the first and last expressions in \eqref{Eq:rect1.7}
 shows that  \eqref{Eq:rect1.1} holds with $P_\beta$ and $Q_\beta$ replaced by $M\cap P_\beta$ and $N\cap Q_\beta$
 respectively.  Use the implication from \eqref{Eq:rect1.1} to \eqref{Eq:rect1.2} to conclude that
 there can only be one index $\beta$ on the right side of  \eqref{Eq:rect1.7}  for which the intersection
 is not empty, and for that $\beta$ we have
 \[M=M\cap P_\beta\qquad\text{and}\qquad N=N\cap Q_\bt\comma
 \] so that  \eqref{Eq:rect1.6} holds.

 Turn now to the proof of the implication from (i) to (ii). Fix an arbitrary index
$\lph<\ka$. From (i),  it follows immediately that
\[M_\lph\times N_\lph\seq
\bbigcup {\bt<\la}P_\bt\times Q_\bt\per \] Apply the implication
from \eqref{Eq:rect1.5} to \eqref{Eq:rect1.6} to obtain a unique
$\bt<\la$ such that
\begin{equation*}\tag{8}\label{Eq:rect.1.new}
  M_\lph\seq  P_{\bt}\qquad\text{and}\qquad  N_\lph\seq  Q_{\bt}\per
\end{equation*}
  The desired function
is the mapping $\vth$ that sends $\alpha$ to the corresponding
$\bt$, so that \eqref{Eq:rect.1.new} holds for each $\alpha<\kappa$.

Assume finally that equality holds in (i). There are then  uniquely
determined mappings $\vth$ from $\ka $ to $\la$ and $\psi$ from
$\la$ to $\ka$ such that
\begin{align*}
  M_\lph\seq P_{\vth(\lph)}\qquad&\text{and}\qquad N_\lph\seq
Q_{\vth(\lph)}\tag{9}\label{Eq:rect1.8} \\ \intertext{ for each
$\lph<\ka$, and}
 P_\bt\seq M_{\psi(\bt)}\qquad&\text{and}\qquad Q_\bt\seq
N_{\psi(\bt)}\tag{10}\label{Eq:rect1.9}\\ \intertext{for each
$\bt<\la$. Combine \eqref{Eq:rect1.8} and \eqref{Eq:rect1.9} to
arrive at} M_\lph\seq P_{\vth(\lph)}\seq M_{\psi(\vth(\lph))}
\qquad&\text{and}\qquad P_\bt\seq M_{\psi(\bt)}\seq
P_{\vth(\psi(\bt))}\tag{11}\label{Eq:rect1.10}
\end{align*}
       for each $\lph<\ka$ and $\bt<\la$.
The sets $M_\lph$ are pairwise disjoint, as are the sets $P_\bt$, so
the  inclusions in \eqref{Eq:rect1.10} force
\[{\psi(\vth(\lph))}=\lph\qquad\text{and}\qquad
{\vth(\psi(\bt))}=\bt\] for each $\lph<\ka$ and $\bt<\la$. This
implies that the mappings $\vth$ and $\psi$ are bijections and
inverses of each other.
\end{proof}

\begin{lemma} \label{L:PQnormsg}
Suppose $P$ and $Q$ are normal subgroups of groups $G$ and $\bar
G$\co with coset systems \[\langle P_\g : \g <
\ka\rangle\qquad\text{and}\qquad \langle Q_\g ; \g < \ka\rangle\]
respectively\po If the mapping $P_\g \longmapsto Q_\g$ is an
isomorphism from $\gp$ onto $\bar G/Q$\comma then for all
$\alpha,\bt<\ka$\comma we have
\begin{itemize}
\item[\opar i\cpar] $\tsbigc_\gm P_\gm \times (Q_\gm\scir
Q_\lph)= \tsbigc_\gm (P_\gm\scir P_\lph\mo)\times Q_\gm$\rrcomma
\item[\opar ii\cpar] $\tsbigc_\gm P_\gm \times (Q_\gm\scir
Q_\lph\scir Q_\bt)= \tsbigc_\gm (P_\gm\scir P_\lph\mo)\times
(Q_\gm\scir Q_\bt)$\po
\end{itemize}
\end{lemma}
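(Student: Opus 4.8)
The plan is to reduce both identities to a single re-indexing of a union of disjoint rectangles. The key observation is that right multiplication by the coset $P_\lph\mo$ permutes the cosets of $P$: since $P$ is normal, $P_\lph\mo$ is again a coset of $P$, and each complex product $P_\gm\scir P_\lph\mo$ is a coset of $P$, so (using that the given coset system enumerates the cosets without repetition) there is a well-defined map $\sgm=\sgm_\lph$ on $\ka$ with $P_\gm\scir P_\lph\mo = P_{\sgm(\gm)}$. Because multiplication by a fixed coset is a bijection of $\gp$, the map $\sgm$ is a bijection of $\ka$, and its inverse is characterized by $P_{\sgm\mo(\dlt)} = P_\dlt\scir P_\lph$ for all $\dlt<\ka$. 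The crucial point — and the only place where the hypothesis is used — is that \emph{the same} permutation $\sgm$ governs the $Q$-side: the isomorphism $\gp\to\bar G/Q$ carries $P_\lph$ to $Q_\lph$, hence $P_\lph\mo$ to $Q_\lph\mo$, hence the coset $P_\gm\scir P_\lph\mo=P_{\sgm(\gm)}$ to $Q_\gm\scir Q_\lph\mo$; comparing indices gives $Q_\gm\scir Q_\lph\mo = Q_{\sgm(\gm)}$ and, equivalently, $Q_{\sgm\mo(\dlt)} = Q_\dlt\scir Q_\lph$ for all $\dlt<\ka$.

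Granting this, (i) is immediate. The right-hand side is $\tbigcup_\gm P_{\sgm(\gm)}\times Q_\gm$; re-indexing the union along the bijection $\dlt=\sgm(\gm)$ turns it into $\tbigcup_\dlt P_\dlt\times Q_{\sgm\mo(\dlt)}$, and substituting $Q_{\sgm\mo(\dlt)} = Q_\dlt\scir Q_\lph$ produces exactly the left-hand side. For (ii) the argument is the same, carrying the extra factor $Q_\bt$ along: the right-hand side is $\tbigcup_\gm P_{\sgm(\gm)}\times (Q_\gm\scir Q_\bt)$, which after the same re-indexing becomes $\tbigcup_\dlt P_\dlt\times (Q_{\sgm\mo(\dlt)}\scir Q_\bt)=\tbigcup_\dlt P_\dlt\times (Q_\dlt\scir Q_\lph\scir Q_\bt)$ by associativity of complex multiplication, and this is the left-hand side.

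There is essentially no hard step here; the only thing one must be careful about is exactly the bookkeeping flagged above — that the permutation induced on the index set by right multiplication by $P_\lph\mo$ coincides with the one induced by right multiplication by $Q_\lph\mo$. This is precisely what the hypothesis that $P_\gm\mapsto Q_\gm$ is a group isomorphism delivers, since it transports the entire multiplication table of $\gp$ onto that of $\bar G/Q$; once it is in hand, the two displayed identities follow by re-indexing together with the distributivity of Cartesian multiplication over unions. One could alternatively phrase the conclusion by way of Lemma~\ref{L:rect2}, since both sides of each identity are unions of pairwise disjoint non-empty rectangles, but the direct re-indexing is shorter because it exhibits the matching bijection explicitly rather than merely asserting its existence.
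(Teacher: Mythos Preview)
Your proof is correct and is essentially the same as the paper's: both arguments introduce the bijection on $\ka$ induced by right multiplication by $P_\lph\mo$, use the isomorphism hypothesis to transfer it to the $Q$-side, and then re-index the union. The only cosmetic differences are that the paper works from left to right (substituting $P_\gm=P_{\bar\gm}\scir P_\lph\mo$ and $Q_\gm=Q_{\bar\gm}\scir Q_\lph\mo$ directly into the left-hand side) and names the permutation implicitly via $\gm\mapsto\bar\gm$, whereas you name it $\sgm$ and transform the right-hand side; neither choice affects the substance.
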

\begin{proof}
Fix an index $\alpha<\ka$, and observe that  $ \langle P_\gm\scir
P_\lph\mo : \gm < \ka\rangle$ is also an enumeration of the cosets
of $P$. Consequently, for each $\gm < \ka$ there exists a unique
$\bar\gm  < \ka$ such that
\begin{equation*} \tag{1}\label{Eq:P.1}
P_\gm= P_{\bar\g }\scir P_\lph\mo\per\end{equation*} The mapping
$P_\g\longmapsto Q_\g$ is assumed to be an isomorphism, so (1)
implies that
\begin{equation*}\tag{2}\label{Eq:Q.1}
Q_\gm= Q_{\bar\g }\scir Q_\lph\mo.
\end{equation*}
% p.2.34
Use \eqref{Eq:P.1}, \eqref{Eq:Q.1}, and the inverse properties of
groups to get
\begin{equation*}\tag{3}\label{Eq:P.2}
\tsbigc_\gm P_\gm \times (Q_\gm\scir Q_\lph) = \tsbigc_\gm
(P_{\bar\g } \scir P_\lph\mo)\times (Q_{\bar\g }\scir Q_\lph\mo
\scir Q_\lph) = \tsbigc_\gm (P_{\bar\g } \scir P_\lph\mo)\times
Q_{\bar\g }\per
\end{equation*}
As $\g$ varies over $\ka$, so does $\bar\g$, and vice versa, so the
occurrence of $\bar\g$ in the union on the right side of
\eqref{Eq:P.2} may be replaced by $\g$ to arrive at (i).

Exactly the same reasoning also gives
\begin{align}
\tsbigc_\gm P_\gm \times (Q_\gm\scir Q_\lph\scir Q_\bt) &=
\tsbigc_\gm (P_{\bar\g } \scir P_\lph\mo)\times (Q_{\bar\g }\scir
Q_\lph\mo \scir Q_\lph\scir Q_\bt)\notag\\
&= \tsbigc_\gm (P_{\bar\g } \scir P_\lph\mo)\times (Q_{\bar\g }\scir
Q_\bt)\notag\\
&= \tsbigc_\gm (P_\gm \scir P_\lph\mo)\times (Q_\gm\scir
Q_\bt)\comma\notag
\end{align}
which proves (ii).
\end{proof}

The next task is to establish  necessary and sufficient conditions
for the set $A$ to be closed under converse, and in particular, for
$A$ to contain the converse of every atomic relation. As we shall
see in the next theorem, $A$ will contain the converse of every
atomic relation if and only if the isomorphism $\vphi\yx$ is the
inverse of the isomorphism $\vphi\xy$ for every pair $\pair xy$ in
$\mc E$.  Since $\vphi\xy$ maps $\cs Gx/\h\xy$ to $\cs Gy/K_{\xy}$,
and $\vphi\yx$ maps $\cs Gy/\h\yx$ to $\cs Gx/K_{\yx}$, if these two
isomorphisms are  inverses of one another, then we must have
\begin{align*}
\cs Gx/\h\xy=\cs Gx/K_{\yx}\qquad&\text{and}\qquad \cs
Gy/K_{\xy}=\cs Gy/\h\yx\comma\\ \intertext{so that}
K_{\yx}=\h\xy\qquad&\text{and}\qquad K_{\xy}=\h\yx\per
\end{align*}
As mentioned earlier,  the enumeration  of the cosets of the
subgroup $\h\yx$ can be  chosen freely.  Under the given assumption,
we can and shall always adopt the following convention regarding the
choice of this enumeration.
\begin{con}\label{Co:convention} If $\vphi\xy$  and $\vphi\yx$  are inverses of
one another, then the coset enumeration $\langle \hs\yx
\g:\g<\kai\yx\rangle$ is chosen so that $\kai\yx=\kai\xy$ and
 $\hs\yx\g=\ks\xy\g$ for all $\g<\kai\xy$.  It then follows  that
 \[\ks\yx \g =\vphi\yx(\hs \yx\g)=\vphi\xy\mo(\ks \xy\g)=
\ \hs\xy \g\] for all $\g<\kai\xy$.\qed\end{con}

The next theorem  characterizes when $A$ is closed under converse.

\begin{theorem}[Converse Theorem]\label{T:convthm1}  For each
pair $\pair x y$  in $\mc E$\co the following conditions are
equivalent\per
\begin{enumerate}
\item[(i)] There are an $\alpha<\kai \xy$ and a $\bt<\kai\yx$ such that $R_{\xy,\lph}\mo=R_{\yx,\bt}$\per
\item[(ii)] For every $\alpha<\kai \xy$ there is a $\bt<\kai\yx$ such that $R_{\xy,\lph}\mo=R_{\yx,\bt}$\per
\item[(iii)]$\vphi\xy\mo=\vphi\yx$.
\end{enumerate}
Moreover\comma if one of these conditions holds\comma then we may
assume that $\kai \yx=\kai\xy$\comma and the index $\bt$ in
\textnormal{(i)} and \textnormal{(ii)} is uniquely determined by $
\hs\xy\lph\mo=\hs\xy\bt$\per The set $A$ is closed under converse if
and only
 if \textnormal{(iii)} holds for all    $\pair x y$ in $\mc E$\po
\end{theorem}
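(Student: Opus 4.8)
The plan is to prove the cycle of implications (iii) $\Rightarrow$ (ii) $\Rightarrow$ (i) $\Rightarrow$ (iii), together with the uniqueness claim for $\beta$ and the final global statement, exactly as the Converse Theorem is laid out. The implications (ii) $\Rightarrow$ (i) is trivial (specialize $\alpha$, noting $\kai\xy>0$ since $\hs\xy 0=\h\xy$ is always a coset), so the real content is in (iii) $\Rightarrow$ (ii) and (i) $\Rightarrow$ (iii). For both of these I would compute $R_{\xy,\lph}\mo$ directly from Definition~\ref{D:compro}, using the fact that relational converse and Cartesian products interact by $(X\times Y)\mo=Y\times X$, together with distributivity of converse over unions:
\[
R_{\xy,\lph}\mo=\Bigl(\tbigcup_\gm \hs\xy\gm\times(\ks\xy\gm\scir\ks\xy\lph)\Bigr)\mo
=\tbigcup_\gm (\ks\xy\gm\scir\ks\xy\lph)\times\hs\xy\gm\per
\]

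\emph{Proof of} (iii) $\Rightarrow$ (ii). Assume $\vphi\xy\mo=\vphi\yx$. Then by the discussion preceding Convention~\ref{Co:convention} we have $K_{\xy}=\h\yx$ and $K_{\yx}=\h\xy$, and by that convention we may take $\kai\yx=\kai\xy$ with $\hs\yx\gm=\ks\xy\gm$ and $\ks\yx\gm=\hs\xy\gm$ for all $\gm<\kai\xy$. Fix $\alpha<\kai\xy$. Since $\langle\hs\xy\gm\mo:\gm<\kai\xy\rangle$ enumerates the cosets of $\h\xy$ (as the inverses of the cosets of a subgroup are again cosets), there is a unique $\bt<\kai\xy$ with $\hs\xy\lph\mo=\hs\xy\bt$. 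I would then apply Lemma~\ref{L:PQnormsg}(i) with $G=\G y$, $\bar G=\G x$, $P=K_{\xy}$, $Q=\h\xy$, $P_\gm=\ks\xy\gm$, $Q_\gm=\hs\xy\gm$ (the hypothesis of the lemma holds because $\vphi\xy\mo$ is the claimed isomorphism $\ks\xy\gm\mapsto\hs\xy\gm$), which gives
\[
\tbigcup_\gm \ks\xy\gm\times(\hs\xy\gm\scir\hs\xy\lph)=\tbigcup_\gm(\ks\xy\gm\scir\ks\xy\lph\mo)\times\hs\xy\gm\per
\]
The right-hand side is precisely the displayed formula for $R_{\xy,\lph}\mo$ above after re-indexing (using that $\ks\xy\gm\mo$ corresponds under $\vphi\xy\mo$ to $\hs\xy\gm\mo$, hence $Q_\lph\mo=\hs\xy\lph\mo=\hs\xy\bt$), and a short computation identifies the left-hand side, via $\hs\yx\gm=\ks\xy\gm$ and the normal-subgroup coset bookkeeping, with $R_{\yx,\bt}=\tbigcup_\gm\hs\yx\gm\times(\ks\yx\gm\scir\ks\yx\bt)$. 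This proves (ii) and simultaneously pins down $\bt$ by $\hs\xy\lph\mo=\hs\xy\bt$.

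\emph{Proof of} (i) $\Rightarrow$ (iii). Suppose $R_{\xy,\lph}\mo=R_{\yx,\bt}$ for some $\alpha,\bt$. Writing both sides as unions of rectangles using the displayed converse formula and Definition~\ref{D:compro}, I get
\[
\tbigcup_\gm (\ks\xy\gm\scir\ks\xy\lph)\times\hs\xy\gm=\tbigcup_\dlt \hs\yx\dlt\times(\ks\yx\dlt\scir\ks\yx\bt)\per
\]
Both families of rectangles have non-empty, pairwise disjoint ``rows'' and ``columns'' (the $\hs\xy\gm$ are pairwise disjoint cosets; likewise the $\ks\xy\gm\scir\ks\xy\lph$, the $\hs\yx\dlt$, and the $\ks\yx\dlt\scir\ks\yx\bt$), so Lemma~\ref{L:rect2} applies to the equality and yields a bijection $\vth$ between the index sets with $\ks\xy\gm\scir\ks\xy\lph=\hs\yx{}_{\vth(\gm)}$ and $\hs\xy\gm=\ks\yx{}_{\vth(\gm)}\scir\ks\yx\bt$ for every $\gm$. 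The second equation shows every coset of $\h\xy$ in $\G x$ equals some coset of $K_{\yx}$ in $\G x$, and conversely (by surjectivity of $\vth$), so $\h\xy=K_{\yx}$ and the two quotient groups $\G x/\h\xy$ and $\G x/K_{\yx}$ coincide; similarly the first equation forces $K_{\xy}=\h\yx$ and $\G y/K_{\xy}=\G y/\h\yx$. Thus $\vphi\yx$ and $\vphi\xy\mo$ have the same domain and the same codomain. It remains to check they agree on each coset: tracking a fixed $\gm_0$ with $\hs\xy{}_{\gm_0}=\h\xy$ through the two equations (after first using the $\gm_0$ corresponding to the identity coset to show $\bt$ is the index of $\h\yx$, exactly as in the Identity Theorem's handling of the normal-subgroup coset), one reads off $\vphi\xy(\hs\xy\gm)=\ks\xy\gm$ and $\vphi\yx(\hs\yx{}_{\vth(\gm)})=\ks\yx{}_{\vth(\gm)}$, and the two displayed coset identities then say exactly $\vphi\yx=\vphi\xy\mo$. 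Finally, for the global statement: $A$ is the set of unions of atoms $R_{\xy,\lph}$ and converse distributes over unions, so $A$ is closed under converse iff $R_{\xy,\lph}\mo\in A$ for every atom, iff (by the Boolean Algebra Theorem, each $R_{\xy,\lph}\mo$ being then a union of atoms and, by Partition Lemma~\ref{L:i-vi} applied to its transpose rectangle $\G y\times\G x$, forced to be a single atom $R_{\yx,\bt}$) condition (i) holds for every $\pair xy\in\mc E$, which by the equivalence just proved is (iii) for every $\pair xy\in\mc E$.

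The main obstacle I anticipate is the bookkeeping in (i) $\Rightarrow$ (iii): Lemma~\ref{L:rect2} delivers set equalities among cosets, but to conclude $\vphi\yx=\vphi\xy\mo$ as \emph{maps} I must carefully match up the two coset enumerations (via $\vth$) and separately verify they have matching domains and codomains, being careful about whether $\h\xy=K_{\yx}$ and $K_{\xy}=\h\yx$ on the nose rather than merely ``up to isomorphism.'' The identity-coset argument (singling out $\hs\xy 0=\h\xy$ to fix $\bt$) is the linchpin there, mirroring the technique already used in the proof of the Identity Theorem. The rest — the converse-of-a-rectangle computations, the application of Lemma~\ref{L:PQnormsg}(i), and the reduction of the global claim to the pointwise one — is routine distributivity.
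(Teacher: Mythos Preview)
Your cycle (iii) $\Rightarrow$ (ii) $\Rightarrow$ (i) $\Rightarrow$ (iii) and your use of Lemma~\ref{L:rect2} and Lemma~\ref{L:PQnormsg}(i) match the paper's argument closely; the computations you sketch for (iii) $\Rightarrow$ (ii) and (i) $\Rightarrow$ (iii) are essentially the paper's, modulo whether one applies Lemma~\ref{L:PQnormsg}(i) before or after taking converse.

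There is, however, a genuine gap in your treatment of the final global assertion. You write that once $R_{\xy,\lph}\mo\in A$, the Partition Lemma ``forces'' it to be a \emph{single} atom $R_{\yx,\bt}$. The Partition Lemma only tells you that $R_{\xy,\lph}\mo$ is a union of atoms $R_{\yx,\bt}$; it says nothing about that union having exactly one term. You need an additional argument. One clean fix is a mutual-refinement argument: closure of $A$ under converse gives, for every $\bt$, that $R_{\yx,\bt}\mo$ is also a union of atoms $R_{\xy,\lph'}$; applying converse again and using that $R_{\xy,\lph}$ is an atom forces each $R_{\yx,\bt}$ in the union for $R_{\xy,\lph}\mo$ to have $R_{\yx,\bt}\mo=R_{\xy,\lph}$, whence the union has a single term. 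The paper takes a different route: it works with the specific index $\alpha=0$, observes that $\pair{\e y}{\e x}\in R_{\xy,0}\mo$ and that $R_{\yx,0}$ is the unique atom containing this pair, so $R_{\yx,0}\seq R_{\xy,0}\mo$; the symmetric argument gives $R_{\xy,0}\seq R_{\yx,0}\mo$, and the first involution law then yields $R_{\xy,0}\mo=R_{\yx,0}$, verifying (i) directly. Either approach closes the gap, but as written your appeal to the Partition Lemma does not.

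A minor point: in your (i) $\Rightarrow$ (iii) sketch, the phrase ``show $\bt$ is the index of $\h\yx$'' is garbled; what the identity-coset argument actually yields (and what the paper does) is first $\vth(0)=0$, hence $\h\yx=K_{\xy}$, and then $\ks\yx\bt=\hs\xy\lph\mo$, after which the remaining coset identities unwind to $\vphi\yx=\vphi\xy\mo$. Your outline is correct in spirit but would benefit from tracking this more carefully.
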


\begin{proof}Observe, first of all, that without using any of the hypotheses in (i)--(iii),
only the definition of the relation $R_{\xy,\lph}$,
Lemma~\ref{L:PQnormsg}(i), the distributivity of relational
converse, and the definition of relational converse, we have
\begin{multline*}\tag{1}\label{Eq:ct1.03}
R_{{\xy},{\lph}}\mo  = \bigl[\tbigcup_\gm H_{\xy,\gm}\times
(K_{\xy,\gm}\scir K_{\xy,\lph})\bigr]\mo
 = \bigl[\tbigcup_\gm (H_{\xy,\gm}\scir\hs\xy\alpha\mo)\times
K_{\xy,\gm}\bigr]\mo\\ = \tbigcup_\gm
\bigl[(H_{\xy,\gm}\scir\hs\xy\alpha)\times K_{\xy,\gm} \bigr]\mo =
\tbigcup_\gm K_{\xy,\gm}\times (H_{\xy,\gm}\scir\hs\xy\alpha\mo)\per
\end{multline*}

Assume now that (iii) holds, with the intention of deriving (ii).
Choose $\bt<\kai\xy$ so that
 \begin{equation*}\tag{2}\label{Eq:ct1.08} \hs\xy \beta = \hs\xy \lph\mo\per
\end{equation*}  In view of  assumption (iii),   Convention~\ref{Co:convention} may be applied to write $\kai\yx=\kai\xy$, and
\begin{equation*}\tag{3}\label{Eq:ct1.09} \hs\yx \g = \ks\xy \g\comma \qquad \ks\yx \g =
\hs\xy \g
\end{equation*} for each $\g<\kai\xy$. Use the definition of the relation $R_{\yx,\beta}$,
together with \eqref{Eq:ct1.09}, \eqref{Eq:ct1.08}, and
\eqref{Eq:ct1.03}, to conclude
that\begin{multline*}R_{\yx,\beta}=\tbigcup_\gm H_{\xy,\gm}\times
(K_{\yx,\gm}\scir K_{\yx,\bt})
= \tbigcup_\gm K_{\xy,\gm}\times (H_{\xy,\gm}\scir H_{\xy,\bt})\\
= \tbigcup_\gm K_{\xy,\gm}\times (H_{\xy,\gm}\scir
H_{\xy,\lph}\mo)=R_{\xy,\lph}\mo\per
 \end{multline*}
 Thus, (ii) holds.

The implication from (ii) to (i) is obvious. Consider now the
implication from (i) to (iii). Fix $\alpha<\kai\xy$, and suppose
that
\begin{equation*}R_{\xy,\bt}=
R_{\xy,\alpha}\mo\per\tag{4}\label{Eq:new6}
\end{equation*} Use \eqref{Eq:new6}, the definition of $R_{\yx,\bt}$, and \eqref{Eq:ct1.03} (with $\gm$
replaced by another variable, say $\eta$) to obtain
\begin{equation*}
\tbigcup_{\g <\kai\yz}\hs\yx\g\times (\ks\yx \g\scir\ks\yx\bt)=
\tbigcup_{\eta<\kai\xy} K_{\xy,{\eta}}\times( H_{\xy,{\eta}}\scir
H_{\xy,\lph}\mo)\per\tag{5}\label{Eq:new8}
\end{equation*} Apply Lemma~\ref{L:rect2} to \eqref{Eq:new8} to see that there must be a bijection
 $\vth$ from $\kai\xy$ to $\kai\xy$  such that
\begin{equation*}
 \hs\yx \g=\ks\xy{\vth(\g)}\qquad\text{and}\qquad \ks\yx \g\scir\ks\yx\bt =
 \hs\xy
{\vth(\g)}\scir\hs\xy\bt \tag{6}\label{Eq:new8.9}
\end{equation*} for all $\g<\kai\yx$.

Take $\g=0$ in \eqref{Eq:new8.9}. It follows from the first equation
that $\hs\yx 0=\ks\xy{\vth(0)}$. Since $\hs\yx 0$ is a subgroup of
$\G y$, the same must be true of $\ks\xy{\vth(0)}$. The only
subgroup in the coset enumeration of $K_{\xy}$ is $\ks\xy 0$, so
$\vth(0)=0$, and therefore
\[\h\yx=\hs\yx 0=\ks\xy 0=K_{\xy}.\] Apply this observation to the second equation, and use
the fact that $\vth(0)=0$, to arrive at
\begin{multline*}\tag{7}\label{Eq:ct1.13}
\ks\yx\bt=K_{\yx} \scir\ks\yx\bt=\ks\yx 0\scir\ks\yx\bt\\=\hs\xy
{\vth(0)}\scir\hs\xy\bt=\hs\x 0\scir\hs\xy\bt=\h\xy
\scir\hs\xy\bt=\hs\xy\bt.
\end{multline*}
(Recall that $K_{\yx }$ and $\h\xy $ are the identity cosets of
their respective coset systems.)

Multiply the left and right sides of the second equation in
\eqref{Eq:new8.9}, on the right, by  $\ks\yx\bt\mo$, use the inverse
law for groups, and use the equality of the first and last cosets in
\eqref{Eq:ct1.13}, to arrive at
\begin{multline*}\tag{8}\label{Eq:new11}
\ks\yx\g=\ks\yx \g\scir\ks\yx\bt\scir \ks\yx\bt\mo = \hs\xy
{\vth(\g)}\scir\hs\xy\bt \scir \ks\yx\bt\mo\\
= \hs\xy {\vth(\g)}\scir\hs\xy\bt\scir  \hs\xy\bt\mo =\hs\xy
{\vth(\g)}
\end{multline*}
 for every $\g<\kai\yx$. Consequently,
\begin{equation*}
\vphi\yx(\ks\xy{\vth(\gm)})=\vphi\yx(\hs\yx\gm) = \ks\yx\gm
=\hs\xy{\vth(\gm)},
\end{equation*}
 by   \eqref{Eq:new8.9}, the definition of $\ks\yx\g$,  and \eqref{Eq:new11}. As $\gm$
runs through the indices less than $\kai\yx$, the image $\vth(\gm)$
runs through the indices less than $\kai\xy$, so the preceding
string of equalities shows that
\begin{equation*}
\vphi\yx(\ks\xy\dlt)=\hs\xy\dlt\tag{9}\label{Eq:new8.10}\end{equation*}
for every $\dlt<\kai\xy$.  Since $\vphi\xy$ maps each coset
$\hs\xy\dlt$ to $\ks\xy\dlt$, it follows from \eqref{Eq:new8.10}
that $\vphi\yx$ is the inverse of $\vphi\xy$.  This completes the
proof that conditions (i)--(iii) are equivalent.

If one of the three conditions holds, then all three conditions hold
by the equivalence just established.  Consequently, using the proof
of the implication from (iii) to (ii), we may assume that
$\kai\yx=\kai\xy$, and choose $\bt<\kai\xy$ so that
\eqref{Eq:ct1.08} holds. This proves the second assertion of the
theorem.

Turn  to the proof of the final assertion of the theorem. Assume
first that (iii) holds for all $\pair xy$ in $\mc E$. The
 atoms in $A$ are just the relations of the form $R_{\xy,\alpha}$, so
from the equivalence of (ii) with (iii), it follows that the
converse of every atom in $A$ is again an atom in $A$. The elements
of $A$ are just the unions of these various atoms, by
Theorem~\ref{T:disj}, and the converse of a union of atoms is again
a union of  atoms, by the preceding observation and the
distributivity of converse.  Thus, the converse of every element in
$A$ belongs to $A$, so $A$ is closed under converse.

Assume now that $A$ is closed under converse, and fix an arbitrary
pair $\pair x y$  in $\mc E$. The relation $R_{\xy,0}$ is a subset
of $\cs Gx\times\cs Gy$ and belongs to  $A$, by  Lemma~\ref{L:i-vi}
and the definition of $A$. It follows that the converse relation
$R_{\xy,0}\mo$ is a subset of $\cs Gy\times \cs Gx$, and it belongs
to $A$ by assumption. Consequently, there must be a non-empty set
$\Ga\seq\kai\yx$ such that
\begin{equation*}R_{\xy,0}\mo=\tbigcup_{\bt\in\Ga}R_{\yx,\bt}\comma\tag{10}\label{Eq:ct1.15}\end{equation*}
by Boolean Algebra Theorem~\ref{T:disj}. The pair $\pair {\e x}{\e
y}$ belongs to the relation $R_{\xy,0}$, by the definition of
$R_{\xy,0}$ (in fact, the pair is in $\hs\xy 0\times \ks\xy 0$,
which is one of the rectangles that make up $R_{\xy, 0}$), so the
converse pair $\pair {\e y}{\e x}$ belongs to $R_{\xy, 0}\mo$. For
similar reasons, the relation $R_{\yx, 0}$ contains the pair $\pair
{\e y}{\e x}$, and it is the only relation of the form $R_{\yx,
\bt}$ that contains this pair, because the atomic relations in $A$
are pairwise disjoint. It follows from this observation and
\eqref{Eq:ct1.15} that $0$ must be one of the indices in $\Ga$. In
other words,
\begin{equation*}\tag{11}\label{Eq:ct1.16}
  R_{\yx, 0}\seq R_{\xy, 0}\mo\per
\end{equation*} Reverse the roles of $x$ and $y$ in this argument
to obtain
\begin{equation*}\tag{12}\label{Eq:ct1.17}
  R_{\xy, 0}\seq R_{\yx, 0}\mo\per
\end{equation*} Combine \eqref{Eq:ct1.16} with \eqref{Eq:ct1.17}, and
use the monotony and first involution laws for converse, to arrive
at
\[R_{\xy, 0}\mo\seq(R_{\yx, 0}\mo)\mo=R_{\yx,0}\seq R_{\xy, 0}\mo\per\]  The first
and last terms are equal, so equality must hold everywhere.  In
particular, $R_{\xy, 0}\mo=R_{\yx, 0}$. This shows that condition
(i) is satisfied for the pair $\pair xy$ in the case $\alpha=0$.
Invoke the equivalence of (i) with (iii) to conclude that (iii)
holds for all pairs $\pair xy$.
\end{proof}

It is natural to ask  whether, in analogy with Identity
Theorem~\ref{T:identthm1}, one can add another condition to those
already listed in Converse Theorem~\ref{T:convthm1}, for example,
the condition that $R_{\xy, \alpha}\mo$ be  in $A$ for some
$\alpha<\kai\xy$. It turns out, however, that in the absence of
additional hypotheses, this condition is not equivalent to the
conditions listed in  the lemma. We   return to this question at the
end of the next section.

 Notice that condition (ii) in the preceding theorem, combined with the second assertion of the theorem, provides a
concrete method of computing the converse of a relation $R_{\xy,
\alpha}$ in terms of the structure of the quotient group $\G
x/\h\xy$: just compute the index $\bt$ such that $
\hs\xy\lph\mo=\hs\xy\bt$, for then we have
$R_{\xy,\lph}\mo=R_{\xy,\bt}$. This method, in turn, provides a
concrete way of computing the converse of any relation in $A$.

 The final and most difficult task is to characterize when  the
 set $A$ is closed under relational composition, and in
 particular, when it contains the composition of two atomic
 relations. There is one case in which the relative product of two
 atomic relations is empty, and therefore automatically in $A$.

 \begin{lemma}\label{L:emptycomp}  If $\pair \wx\wy$ and $ \pair w z$ are in $\mc E$\co and if $y\ne
w$\co then
\[R_{\xy,{\alpha}} \rp R_{\wwz, \bt}=\varnot
\]
for all $\lph<\kai\xy$ and $\bt <\kai\wwz$\po
\end{lemma}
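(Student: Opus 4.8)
The plan is to unwind the definitions of the two relations and show that they cannot share a pair, because the right-hand ``endpoints'' of $R_{\xy,\alpha}$ lie in $\grp y$ while the left-hand ``endpoints'' of $R_{\wwz,\bt}$ lie in $\grp w$, and these two groups are disjoint when $y\neq w$. Concretely, by Partition Lemma~\ref{L:i-vi} (or directly from Definition~\ref{D:compro}), $R_{\xy,\alpha}\seq \grp x\times\grp y$ and $R_{\wwz,\bt}\seq\grp w\times\grp z$. Suppose $\pair{u}{v}$ belongs to the composition $R_{\xy,\alpha}\rp R_{\wwz,\bt}$. Then by the definition of relational composition there is some $t$ with $\pair u t\in R_{\xy,\alpha}$ and $\pair t v\in R_{\wwz,\bt}$; the first membership forces $t\in\grp y$ and the second forces $t\in\grp w$, so $t\in\grp y\cap\grp w$.

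Since the system of groups $\langle\grp x:x\in I\rangle$ is assumed pairwise disjoint, and $y\neq w$, we have $\grp y\cap\grp w=\varnot$, a contradiction. Hence no such pair $\pair u v$ exists, i.e.\ $R_{\xy,\alpha}\rp R_{\wwz,\bt}=\varnot$, for every choice of $\alpha<\kai\xy$ and $\bt<\kai\wwz$.

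There is essentially no obstacle here: the only fact needed beyond the definitions is that relational composition of $R$ and $S$ requires a common intermediate point, together with the disjointness of the groups in the fixed system and the elementary containment $R_{\xy,\alpha}\seq\grp x\times\grp y$ (which is part of the content of Partition Lemma~\ref{L:i-vi}, or follows at once from Definition~\ref{D:compro} since each $H_{\xy,\gm}\seq\grp x$ and each $K_{\xy,\gm}\scir K_{\xy,\alpha}\seq\grp y$). The proof is a two-line observation; the only care to take is to phrase it in terms of the first composition, since $R_{\xy,\alpha}$ is a subset of $\grp x\times\grp y$ and we are matching its second coordinate against the first coordinate of $R_{\wwz,\bt}\seq\grp w\times\grp z$.
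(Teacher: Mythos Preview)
Your proof is correct and follows essentially the same approach as the paper: both use Partition Lemma~\ref{L:i-vi} to obtain $R_{\xy,\alpha}\seq\grp x\times\grp y$ and $R_{\wwz,\bt}\seq\grp w\times\grp z$, and then invoke the disjointness of $\grp y$ and $\grp w$ to conclude that the composition is empty. The only cosmetic difference is that the paper phrases this via monotony of relational composition, writing $R_{\xy,\alpha}\rp R_{\wwz,\bt}\seq(\grp x\times\grp y)\rp(\grp w\times\grp z)=\varnot$, whereas you argue elementwise with a hypothetical intermediate point $t$.
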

\begin{proof}
Indeed,
\[
R_{{\xy},{\lph}}\seq G_{x}\times G_{y} \quad\text{and}\quad
R_{{\wwz},{\bt}}\seq G_w \times G_z,
\]
by Lemma~\ref{L:i-vi}.  Therefore,
\[
R_{{\xy},{\lph}} \rp R_{{\wwz},{\bt}}\seq (G_{x}\times G_{y}) \rp
(G_w \times G_z)\comma
\] by monotony.
If $y\neq w$, then the sets $G_{y}$ and $G_w $ are disjoint, and
therefore the relational composition of $G_{x}\times G_{y}$ and $G_w
\times G_z$ is empty.
\end{proof}

 To clarify the underlying ideas of the remaining case when $y=w$,
we again use cosets as indices of the atomic relations for a few
moments. It is natural to conjecture that (under suitable
hypotheses) the
 composition of the
 relations corresponding to  cosets   $H$ and $\bar H$ of $\h\xy$ and $\h\yz$ respectively
is precisely the relation corresponding to the   group composition
of the two cosets,
\[R_{\xy,{H}}\rp R_{\yz,{\bar H}}=R_{\xz,{H \scir\bar H}}.
\]

This form of the conjecture is  incorrect. The first difficulty is
that the cosets $H$ and $\bar H$ live in disjoint groups, and
therefore cannot be composed. To write the conjecture in a
meaningful way, one must first translate the coset $H$ to its copy,
the coset $K=\vphi\xy(H)$ of $K_{\xy}$ in $\G\wy$, where  $\bar H$
``lives",  and then compose this translation with $\bar H$ to arrive
at a coset \[M=K\scir\bar H\] of $K_{\xy}\scir\h\yz$.

 The second difficulty is that compositions  of  subrelations  of \[\cs
Gx\times \cs Gy\qquad\text{and}\qquad \cs Gy\times \cs Gz \] should
be a subrelation of $\cs Gx\times \cs Gz$, and therefore should have
$\xz$ as part of the index.  The relations  indexed with $\xz$ are
constructed with the help of cosets of $\h\xz$, so it is necessary
to translate the composite coset $K\scir\bar H$ from $\cs Gy$ back
to $\cs Gx$  using the mapping $\vphi\xy\mo$, so that it can be
written as a union of cosets of $\h\xz$. A more reasonable form of
the original conjecture might look like
\[R_{\xy,{H}}\rp R_{\yz,{\bar H}}=R_{\xz,{\vphi\xy\mo[\M]}}=R_{\xz,{\vphi\xy\mo[K\scir\bar H]}}\per
\]

 The third difficulty is that the relation on the right side of this last
equation has not been defined.   At this point, we can only speak in
a meaningful way about relations $R_{\xz,{\hat H}}$ for single
cosets ${\hat H}$ of $\h\xz$. It therefore is necessary to rewrite
the preceding conjecture in the form
\[R_{\xy,{H}}\rp R_{\yz,{\bar H}}=\tbigcup\{R_{\xz,{\hat H}} :{\hat H}\seq{\vphi\xy\mo[K
\scir\bar H]}\}.
\]

In order for the conjecture be true, the subgroup
$\vphi\xy\mo[K_{\xy}\scir\h\yz]$ must include the subgroup $\h\xz$,
so that  the coset $\vphi\xy\mo[K \scir\bar H]$
 can really be written as a union of cosets $\hat H$ of $\h\xz$.  Moreover, it is
natural to suspect that some sort of composition of the mappings
$\vphi\xy$ and $\vphi\yz$ should equal the mapping $\vphi\xz$,
\begin{alignat*}{2}&\ \vphi\xy&&\ \vphi\yz\\
\G x/\h\xy&\longmapsto\G y/K_{\xy}\comma\qquad&\qquad \G y/\h\yz
&\longmapsto\G z/K_{\yz},
\end{alignat*}
\begin{align*} &\vphi\xz\\
\G x/\h\xz&\longmapsto\G z/K_{\xz}.
\end{align*}  However, the subgroup
$K_{\xy}$ may not coincide with the subgroup $\h\yz$ at all, so it
is not meaningful to speak about the composition of $\ho x y$ with
$\ho y z$.  In order to be able to compose quotient isomorphisms,
one has first to form  a common quotient group using the complex
product of the subgroups, and then compose the induced isomorphisms
$\vphih \xy$ and $\vphih \yz$,
\begin{alignat*}{3}&\ \vphih\xy& &\ & &\ \vphih\yz\\
\G x/(\h\xy\scir\h\xz)&\longmapsto\ & \G y/&(K_{\xy}\scir\h\yz)&
&\longmapsto\G z/(K_{\xz}\scir K_{\yz}).\\ &\longmapsto& &\ \
\vphih\xz& &\longmapsto
\end{alignat*}
What really should be true is that the composition of the induced
mappings $\vphih\xy$ and $\vphih\yz$ should equal the induced
mapping $\vphih\xz$.
 These conditions do indeed prove to be necessary and sufficient
for the conjecture to hold. We  formulated them in the conventional
notation, using the subscripts of the cosets in place of the cosets.

\begin{theorem}[Composition Theorem]\label{T:compthm}  For
all pairs $\pair x y$ and $\pair y z$  in $\mc E$\co the following
conditions are equivalent\per
\begin{enumerate}
\item[(i)] The relation
$R_{\xy, 0} \rp R_{\yz, 0}$ is in $A$\per
\item[(ii)] For each
$\lph<\kai\xy$ and each $\bt<\kai\yz$\,\co the relation
$R_{\xy,\lph}\rp R_{\yz,\bt}$ is in $A$\per
\item[(iii)] For each
$\lph<\kai\xy$ and each $\bt<\kai\yz$\,\co
\[R_{ \xy, \alpha} \rp R_{ \yz,\bt}=\tbigcup\{R_{\xz,\g} :
\hs \xz \g \seq \vphi \xy\mo[ \ks\xy \alpha\scir\hs \yz \bt]\}\per\]
\item[(iv)]$ \h\xz\seq\vphi\xy\mo[K_{\xy}\scir\h\yz]$ and
$\vphih\xy\rp\vphih\yz=\vphih\xz$\,\co where $\vphih\xy$ and
$\vphih\xz$ are the mappings induced by $\vphi\xy$ and $\vphi\xz$ on
the quotient of $\G\wx$ modulo the normal subgroup
$\vphi\xy\mo[K_{\xy}\scir\h\yz]$\,\co while $\vphih\yz$ is the
isomorphism induced by $\vphi\yz$ on the quotient of $\G\wy$ modulo
the normal subgroup $K_{\xy}\scir\h\yz$. \end{enumerate}
Consequently\co the set $A$ is closed under relational composition
if and only if \textnormal{(iv)} holds for all pairs  $\pair x y$
and $\pair y z$ in $\mc E$\po
\end{theorem}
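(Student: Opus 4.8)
The plan is to prove the cycle of implications (iv) $\Rightarrow$ (iii) $\Rightarrow$ (ii) $\Rightarrow$ (i) $\Rightarrow$ (iv), and then to derive the closure statement from this equivalence together with Lemma~\ref{L:emptycomp} and the transitivity of $\mc E$. The computational backbone is a formula for $R_{\xy,\lph}\rp R_{\yz,\bt}$ that holds with no hypotheses on the quotient isomorphisms at all. Writing each of the two factors as its defining disjoint union of rectangles, using the distributivity of relational composition over unions, and using the elementary identity that $(X\times Y)\rp(Z\times W)$ equals $X\times W$ when $Y\cap Z\ne\varnot$ and equals $\varnot$ otherwise, one obtains
\[
R_{\xy,\lph}\rp R_{\yz,\bt}=\tbigcup\{\hs\xy\gm\times(\ks\yz\dlt\scir\ks\yz\bt):(\ks\xy\gm\scir\ks\xy\lph)\cap\hs\yz\dlt\ne\varnot\}.
\]
Now $K_{\xy}$ and $\h\yz$ are normal subgroups of $\G y$, so their product $L:=K_{\xy}\scir\h\yz$ is also normal, and a coset of $K_{\xy}$ meets a coset of $\h\yz$ exactly when the two lie in a common coset of $L$; moreover $\vphi\xy$ induces an isomorphism of $\G x/N$ onto $\G y/L$, where $N:=\vphi\xy\mo[L]$, while $\vphi\yz$ induces an isomorphism of $\G y/L$ onto $\G z/\bar L$, where $\bar L$ is the $\vphi\yz$-image of $L$. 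Using these observations, Lemma~\ref{L:PQnormsg}, and distributivity, I would regroup the rectangles above---first summing over $\dlt$ for a fixed $\gm$, then over $\gm$---into a ``diagonal'' form
\[
R_{\xy,\lph}\rp R_{\yz,\bt}=\tbigcup_{C\in\G x/N}C\times f_{\lph\bt}(C),
\]
where $f_{\lph\bt}\colon\G x/N\to\G z/\bar L$ is a bijection assembled from $\vphih\xy$, $\vphih\yz$, and translations determined by $\lph$ and $\bt$. In this decomposition the first coordinates (cosets of $N$) are pairwise disjoint and so are the second coordinates, so Lemma~\ref{L:rect2} will apply to it.

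\emph{Proof of} (iv) $\Rightarrow$ (iii). Under (iv) we have $\h\xz\seq N$, so every coset of $N$ is a union of cosets of $\h\xz$, and a short computation from the definition of the relations $R_{\xz,\gm}$ together with the homomorphism property gives, for every coset $D$ of $N$,
\[
\tbigcup\{R_{\xz,\gm}:\hs\xz\gm\seq D\}=\tbigcup_{C\in\G x/N}C\times\vphih\xz[C\scir D].
\]
Taking $D:=\vphi\xy\mo[\ks\xy\lph\scir\hs\yz\bt]$ puts the right side of (iii) into the same diagonal form as the composition, so it remains only to verify that $f_{\lph\bt}(C)=\vphih\xz[C\scir D]$ for every $C$. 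It is precisely here that the hypothesis $\vphih\xy\rp\vphih\yz=\vphih\xz$ of (iv) is used: one expands
\[
\vphih\xz[C\scir D]=\vphih\xz[C]\scir\vphih\xz[D]=\vphih\yz[\vphih\xy[C]]\scir\vphih\xz[D]
\]
by the homomorphism property and the hypothesis, and matches this termwise against the explicit description of $f_{\lph\bt}$, using the description of $D$. An application of Lemma~\ref{L:rect2} to the equality of the two diagonal decompositions then yields (iii). I expect this termwise reconciliation---tracking cosets through the three induced isomorphisms and the two translations while keeping the subgroups $N$, $L$, and $\bar L$ aligned---to be the main obstacle of the proof; the remaining steps are comparatively routine.

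\emph{Proof of} (iii) $\Rightarrow$ (ii) $\Rightarrow$ (i) \emph{and} (i) $\Rightarrow$ (iv). The right side of (iii) is a union of atoms of $A$, hence belongs to $A$ by Boolean Algebra Theorem~\ref{T:disj}; this gives (iii) $\Rightarrow$ (ii), and (ii) $\Rightarrow$ (i) is the special case $\lph=\bt=0$. For (i) $\Rightarrow$ (iv), assume $R_{\xy,0}\rp R_{\yz,0}\in A$; by Boolean Algebra Theorem~\ref{T:disj} it equals $\tbigcup_{\gm\in\Ga}R_{\xz,\gm}$ for some $\Ga\seq\kai\xz$, which is non-empty because $\pair{\e x}{\e z}$ lies in the composition. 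Each $R_{\xz,\gm}$ with $\gm\in\Ga$ is contained in the diagonal decomposition $\tbigcup_C C\times f_{00}(C)$ above, so applying the inclusion half of Lemma~\ref{L:rect2} to the diagonal form of $R_{\xz,\gm}$ forces every coset $\hs\xz\eta$ to lie inside one coset of $N$; the case $\eta=0$ yields $\h\xz\seq N$, the first clause of (iv). Intersecting the equality $\tbigcup_C C\times f_{00}(C)=\tbigcup_{\gm\in\Ga}R_{\xz,\gm}$ with the square $\h\xz\times\G z$ shows that $\tbigcup_{\gm\in\Ga}\ks\xz\gm=f_{00}(N)$, so that $\Ga=\{\gm:\hs\xz\gm\seq D'\}$ for a suitable coset $D'$ of a normal subgroup of $\G x$ containing $\h\xz$. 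As in the proof of (iv) $\Rightarrow$ (iii), $\tbigcup_{\gm\in\Ga}R_{\xz,\gm}$ is then another diagonal decomposition of the same relation, and comparing it with $\tbigcup_C C\times f_{00}(C)$ via Lemma~\ref{L:rect2} forces that subgroup to be $N$ and forces $f_{00}=\vphih\xz$. Unwinding the definition of $f_{00}$ gives $\vphih\xy\rp\vphih\yz=\vphih\xz$, the second clause of (iv).

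\emph{The closure statement.} Suppose (iv) holds for all admissible pairs. Then every product of two atoms $R_{\xy,\lph}$ and $R_{\wwz,\bt}$ of $A$ lies in $A$: by Lemma~\ref{L:emptycomp} the product is empty, hence in $A$, whenever $y\ne w$, while $R_{\xy,\lph}\rp R_{\yz,\bt}$ is in $A$ by (iii)---here $\pair xz\in\mc E$ by the transitivity of $\mc E$, so that the relations $R_{\xz,\gm}$ are defined. Since every element of $A$ is a union of atoms and relational composition distributes over unions, $A$ is closed under composition. Conversely, if $A$ is closed under composition, then in particular $R_{\xy,0}\rp R_{\yz,0}\in A$ for all $\pair xy$ and $\pair yz$ in $\mc E$, so that (i), and hence (iv), holds for all such pairs.
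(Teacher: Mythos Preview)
Your proposal is correct and follows essentially the same route as the paper. Both arguments rest on the same computational backbone: the ``diagonal'' decomposition
\[
R_{\xy,\lph}\rp R_{\yz,\bt}=\tbigcup_{\rho}M_\rho\times(N_\rho\scir N_\xi\scir N_\sigma)
\]
(your $\tbigcup_C C\times f_{\lph\bt}(C)$), derived via Lemma~\ref{L:PQnormsg} and the coset-intersection bookkeeping for $K_{\xy}\cap\h\yz$; both then use this formula together with Lemma~\ref{L:rect2} to drive the cycle (iv)$\Rightarrow$(iii)$\Rightarrow$(ii)$\Rightarrow$(i)$\Rightarrow$(iv). The only organizational difference is in (i)$\Rightarrow$(iv): the paper argues directly that the partition $\langle\tbigcup_{\dlt\in\Ps_\rho}\ks\xz\dlt\rangle$ must coincide with $\langle N_\rho\rangle$ by a counting/covering argument, whereas you recast $\tbigcup_{\gm\in\Ga}R_{\xz,\gm}$ as a second diagonal decomposition (over $G_x/N'$ with $N'=\vphi\xz\mo[N_0]$) and invoke the equality case of Lemma~\ref{L:rect2} to force $N'=N$ and $f_{00}=\vphih\xz$. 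That variant is sound---note that $\pair{\e x}{\e z}$ lying in the composition gives $0\in\Ga$, hence $K_{\xz}\seq N_0$, so $N'$ is well defined---and arguably cleaner, but it is the same idea packaged differently.
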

\begin{proof}

Let $\p 0$ be the normal subgroup of $\G y$ generated by $K_{\xy}$
and $\h\yz$,  \begin{align*}  \p 0
&=K_{\xy}\scir\h\yz.\tag{1}\label{Eq:cot1}\\ \intertext{Choose a
coset system
 $\langle \p \xi:\xi<\mu\rangle$  for $\p 0$ in $\G y$, and write}
\m \xi &=\vphi\xy\mo[\p\xi\,]\tag{2}\label{Eq:cot2}\\
\intertext{for $\xi<\mu$. The isomorphism properties of $\vphi\xy$
imply that}  \m 0=
\vphi\xy\mo[\p0]&=\vphi\xy\mo[K_{\xy}\scir\h\yz]\tag{3}\label{Eq:cot3.0}\\
\intertext{is a normal subgroup of $\G x$   that includes $\h\xy$
(the inverse image of  $K_{\xy}$ under $\vphi\xy$), and that
 the sequence $\langle \m \xi:\xi<\mu\rangle$ is a coset system
for $\m 0$ in $\G x$.  Moreover, the isomorphism $\vphi\xy$ induces
a quotient isomorphism $\vphih\xy$ from $\gxm$ to $\gyp$ that maps
$\m \xi$ to $ \p \xi$\ for each $\xi<\mu$. Similarly,  write}  \n
\xi &=\vphi\yz[\p\xi\,]\tag{4}\label{Eq:cot3}\end{align*} for
$\xi<\mu$, and observe that $\n 0$ is a normal subgroup of $\G z$
that includes $K_{\yz}$ (the image of $\h\yz$ under $\vphi\yz$), and
that the sequence  $\langle \n \xi:\xi<\mu\rangle$ is a coset system
for $\n 0$ in $\G z$. Moreover, the isomorphism $\vphi\yz$ induces a
quotient isomorphism $\vphih\yz$ from $\gyp$ to $\gzn$ that maps $\p
\xi$ to $ \n \xi$\ for each $\xi<\mu$.

Since $\p 0$ is a union of cosets of $K_{\xy}$, each coset of $\p 0$
is a union of cosets of $K_{\xy}$.  Thus, there is a partition
$\langle \Ga_\xi:\xi<\mu\rangle$ of $\kai\xy$ such that
\begin{align*}\p \xi&=\tbigcup \{\ks\xy \la:\la\in\Ga_\xi\}\tag{5}\label{Eq:cot4}\\
\intertext{for each $\xi<\mu$. Apply $\vphi\xy\mo$ to both sides of
\eqref{Eq:cot4}, and use the distributivity of inverse images,
together with  \eqref{Eq:cot2}, to obtain} \m \xi&=\tbigcup \{\hs\xy
\la:\la\in\Ga_\xi\}\tag{6}\label{Eq:cot5}\\ \intertext{for
$\xi<\mu$\per  Carry out   a completely analogous argument with
$\h\yz$ in place of $K_{\xy}$ to obtain a partition $\langle
\De_\xi:\xi<\mu\rangle$ of $\kai\yz$ such that} \p \xi&=\tbigcup
\{\hs\yz \la:\la\in\De_\xi\}\tag{7}\label{Eq:cot6}\\
\intertext{for each $\xi<\mu$. Apply $\vphi\yz$ to both sides of
\eqref{Eq:cot6}, and use the distributivity of forward images, to
obtain} \n \xi&=\tbigcup \{\ks\yz
\la:\la\in\De_\xi\}\tag{8}\label{Eq:cot7}
\end{align*}
for $\xi<\mu$.

It is well known from group theory that the intersection of the two
normal subgroups $K_{\xy}$ and $\h \yz$ in $\cs Gy$ is again a
normal subgroup in $\cs Gy$, and that a coset system for this
intersection is just the system of intersecting cosets,
  \begin{equation*}\langle \ks\xy
\la\cap\hs\yz \chi:\x<\mu\text{ and }\la\in  \subs \Ga \xi\text{ and
}\chi\in\subs\De\xi\,\rangle\per\tag{9}\label{Eq:cot8}\end{equation*}
In particular, the
 cosets in \eqref{Eq:cot8} are non-empty and mutually disjoint.
Moreover,\begin{multline*}
        \p \xi =\p\xi\cap\p\xi   =(\tbigcup \{\ks\xy \la:\la\in\Ga_\xi\})\cap
        (\tbigcup
\{\hs\yz \la:\la\in\De_\xi\})\\
         = \tbigcup\{
\ks\xy \la\cap\hs\yz \chi:\la\in  \subs \Ga \xi\text{ and
}\chi\in\subs\De\xi\}
         \end{multline*}
for each $\x<\mu$, by \eqref{Eq:cot4}, \eqref{Eq:cot6}, and the
distributivity of intersection. The composition of the relations
\begin{equation*}\tag{10}\label{Eq:cot9} (H_{\xy,\dlt}
\scir H_{\xy,\lph}\mo) \times (K_{\xy,\dlt} \cap H_{\yz,\ch})
\quad\text{and}\quad(K_{\xy,\zt} \cap H_{\yz,\vth}) \times
(K_{\yz,\vth} \scir K_{\yz,\bt})
\end{equation*}
 is  empty if either $\dlt\not=\zt$ or $\ch\not=\vth$, and it is
\begin{equation*}\tag{11}\label{Eq:cot10}
(H_{\xy,\dlt} \scir H_{\xy,\lph}\mo) \times (K_{\yz,\ch} \scir
K_{\yz,\bt})
\end{equation*}
 when $\dlt=\zt$ and $\ch=\vth$.

 If $\rh<\mu$ and $\dlt\in\subs\Ga\rh$, then
\begin{equation*}
K_{\xy,\dlt} = K_{\xy,\dlt} \cap P_\rh = K_{\xy,\dlt}\cap
(\mmmyspace \tbigcup_{\ch\in\subs\De\rh} H_{\yz,\ch} ) =
\tbigcup_{\ch\in\subs\De\rh}  (K_{\xy,\dlt}\cap
H_{\yz,\ch})\comma\tag{12}\label{Eq:cot11}
\end{equation*}
by \eqref{Eq:cot4}, \eqref{Eq:cot6}, and the distributivity of
intersection.
 A completely  analogous argument shows that \begin{equation*}
H_{\yz,\vth} = \tbigcup_{\zt \in\subs\Ga\rh} K_{\xy,\zt}\cap
H_{\yz,\vth}\tag{13}\label{Eq:cot13.0}
\end{equation*}
 for $\rh<\mu$ and $\vth\in\subs\De\gm$.

 Without any special assumptions on $A$, we now prove
that\begin{multline*}   \ks \xy\lph\seq\subs P\xi\ \text{ and }\ \hs
\yz\bt\seq\subs P\sgm\ \text{ implies }\\ R_{{\xy},{\lph}} \rp
R_{{\yz},{\bt}} = \tbigcup_{\rh<\mu}\m\rh\times(\n\rh\scir N_\x
\scir N_\sgm).\tag{14}\label{Eq:cot12}\end{multline*}

Successive  transformations of terms lead to the following values
for $R_{{\xy},{\lph}} \rp R_{{\yz},{\bt}}$.
\[\big[\tbigcup_{ \dlt<\kai\xy}
   H_{\xy,\dlt} \times (K_{\xy,\dlt} \scir K_{\xy,\lph}) \big]
   \relprod \big[\tbigcup_{ \vth<\kai\yz} H_{\yz,\vth} \times
   (K_{\yz,\vth} \scir K_{\yz,\bt}) \big]\comma\] by the
   definitions of $R_{{\xy},{\lph}}$ and $R_{{\yz},{\bt}}$.
\[\big[\tbigcup_{ \rh < \mu}\tbigcup_{\dlt \in\subs\Ga\rh}
   H_{\xy,\dlt} \times (K_{\xy,\dlt} \scir K_{\xy,\lph}) \big]
   \relprod \big[\tbigcup_{ \gm <
   \mu}\tbigcup_{\vth\in\subs\De\gm} H_{\yz,\vth} \times
   (K_{\yz,\vth} \scir K_{\yz,\bt}) \big]\comma\] because the sets
   $\Ga_\rho$ (for $\rho<\mu$) and $\De_\gm$ (for $\gm<\mu$)
   partition $\kai\xy$ and $\kai\yz$ respectively.
\[ \big[ \tbigcup_{\rh,\dlt} (H_{\xy,\dlt} \scir
   H_{\xy,\lph}\mo) \times K_{\xy,\dlt} \big]\relprod \big[
   \tbigcup_{\gm,\vth} H_{\yz,\vth} \times (K_{\yz,\vth}\scir
   K_{\yz,\bt}) \big]\comma \] by Lemma~\ref{L:PQnormsg}(i)\per
\begin{multline*} \big[ \tbigcup_{\rh,\dlt} (H_{\xy,\dlt}\scir
   H_{\xy,\lph}\mo)\times (\,\tbigcup_{\ch\in\subs\De\rh}
   K_{\xy,\dlt}\cap H_{\yz,\ch})\big]\relprod  \\ \big[
   \tbigcup_{\gm,\vth} (\,\tbigcup_{\zt\in\subs\Ga\gm}
   K_{\xy,\zt}\cap H_{\yz,\vth}) \times (K_{\yz,\vth}\scir
   K_{\yz,\bt})\big]\comma
\end{multline*}
by \eqref{Eq:cot11} and \eqref{Eq:cot13.0}\per
\begin{multline*}
\big[ \tbigcup_{\rh,\dlt,\ch} (H_{\xy,\dlt}\scir H_{\xy,\lph}\mo)
\times (K_{\xy,\dlt}\cap H_{\yz,\ch})\big]\relprod\\ \big[
\tbigcup_{\gm,\vth,\zt} (K_{\xy,\zt}\cap H_{\yz,\vth}) \times
(K_{\yz,\vth}\scir K_{\yz,\bt})\big]\comma
\end{multline*}
by the distributivity of Cartesian multiplication.
\begin{multline*}\tbigcup_{\rh,\dlt,\ch,\gm,\vth,\zt}
   [ (H_{\xy,\dlt}\scir H_{\xy,\lph}\mo) \times (K_{\xy,\dlt}\cap
   H_{\yz,\ch})]\relprod\\ [ (K_{\xy,\zt}\cap H_{\yz,\vth}) \times
   (K_{\yz,\vth} \scir K_{\yz,\bt}) ]\comma
\end{multline*}
by the distributivity of relational composition.  The relational
composition inside the brackets of the preceding union is precisely
of the form \eqref{Eq:cot9}. Apply  the conclusions of
\eqref{Eq:cot9} about this relative product, and in particular use
\eqref{Eq:cot10}, the distributivity of complex group composition,
\eqref{Eq:cot5}, and \eqref{Eq:cot7} to obtain
\begin{align*}
R_{{\xy},{\lph}} \relprod R_{{\yz},{\bt}} =& \tbigcup_{\rh,\dlt,\ch}
(H_{\xy,\dlt} \scir H_{\xy,\lph}\mo) \times (K_{\yz,\ch} \scir
K_{\yz,\bt})  \\ =& \tbigcup_\rh [ (\,\tbigcup_\dlt
H_{\xy,\dlt})\scir H_{\xy,\lph}\mo ] \times [ (\,\tbigcup_\ch
K_{\yz,\ch}) \scir K_{\yz,\bt}]  \\ =& \tbigcup_\rh (M_\rh \scir
H_{\xy,\lph}\mo) \times (N_\rh\scir K_{\yz,\bt})  \\ =&
\tbigcup_\rh (M_\rh \scir M_\x\mo) \times (N_\rh\scir N_\sgm) \\
=& \tbigcup_\rh M_\rh \times (N_\rh\scir N_\x \scir N_\sgm)\per
\end{align*}    The fourth step is a consequence
of the following well-known   property of cosets:   $K_{\yz}$ is a
normal subgroup of $\cs N\rh$, so if the coset $\ks\yz\bt$ is
included in the coset $\cs N\sgm$, then \[\cs N\rh\scir\ks\yz\bt=\cs
N\rh\scir\cs N\sgm\] (and similarly for the passage from $\cs
M\rh\scir\hs\xy\alpha\mo$ to $\cs M\rh\scir\cs M\xi\mo$).
 This completes the proof of \eqref{Eq:cot12}.

Assume
\begin{equation*}\tag{15}\label{Eq:cot14}
\h\xz\seq\m 0=\cs\vp\xy\mo[K_{\xy}\scir\h\yz]\per
\end{equation*}  There must then be a partition $\langle\subs\Ps\rh:\rh<\mu\rangle$ of $\kai\xz$ such
that
\begin{equation*}\tag{16}\label{Eq:cot15}
\m \rh = \tbigcup_{\dlt\in\subs\Ps\rh}\hs\xz \dlt
\end{equation*} for each $\rh <\mu$.
 Apply $\vphi\xz$ to both sides of \eqref{Eq:cot15}, and use the
correspondence
\begin{equation*} \tag{17}\label{Eq:cot16}
  \vphi\xz\myshortspace[\hs\xz \dlt]=\ks\xz\dlt
\end{equation*}
  and the distributivity of forward images, to see that  the   function $\vphih\xz$
defined by
\begin{equation*}\tag{18}\label{Eq:cot19.0}
\vphih\xz(\m\rh)=\cs\vp\xz[\m\rh]=\cs\vp\xz[\mmmyspace\tbigcup_{\delta\in
\cs\Ps\rh}\hs\xz\dlt]=\tbigcup_{\dlt\in\cs\Ps\rh}\ks\xz\dlt
\end{equation*}
  is a well-defined  isomorphism on the quotient group $\G x/\m
0$. A simple computation using \eqref{Eq:cot2} and \eqref{Eq:cot3}
shows that
\begin{equation*}\tag{19}\label{Eq:cot13}
(\cs{\hat\vp}\xy\rp\cs{\hat\vp}\yz)(\m\rh)=\cs{\hat\vp}\yz\bigl(\cs{\hat\vp}\xy(\m\rh))
=\cs{\hat\vp}\yz(\p\rh)=\n \rh
\end{equation*} for each $\rh<\mu$.
Combine \eqref{Eq:cot13} and \eqref{Eq:cot19.0} to conclude that the
following three conditions are equivalent:
\begin{equation*}%\tag{}\label{Eq:}
 \vphih\xy\rp\vphih\yz =\vphih\xz\comma \quad \vphih\xz(\m
\rh)=\n\rh\text{ for all $\rh$}\comma\quad \n
\rh=\tbigcup_{\dlt\in\subs\Ps\rh}\ks\xz\dlt\text{ for all
$\rh$}\per\tag{20}\label{Eq:cot18}
\end{equation*}

Turn now to the task of establishing the equivalences in the
theorem. Assume (iv), with the goal of deriving (iii). Fix
$\alpha<\kai\xy$ and $\bt<\kai\yz$, and choose $\xi,\sgm<\mu$ so
that $\ks \xy\lph\seq\subs P\xi$ and $\hs \yz\bt\seq\subs P\sgm$.
The
 multiplication rules for cosets  imply that
\begin{equation*}
 \p \x\scir\p\sigma=\ks\xy \alpha\scir\hs\yz\bt\per
\tag{21}\label{Eq:cot19}\end{equation*}  Choose $\pi$ so that
\begin{align*}\p\xi\scir\p \sgm&=\p
\pi\comma\tag{22}\label{Eq:cot20}\\ \intertext{and use the
isomorphism properties of $\vphih\yz$, together with
\eqref{Eq:cot3}, to obtain}
 \n\xi\scir\n \sgm&=\n \pi\per\tag{23}\label{Eq:cot21}
\end{align*}
Compute:
\begin{align*}
R_{{\xy},{\lph}} \rp R_{{\yz},{\bt}}&=
\tbigcup_{\rh<\mu}\m\rh\times(\n\rh\scir\n\pi)\\ &= \tbigcup_\rh
(\mmmyspace\tbigcup_{\dlt\in\subs\Ps\rh} H_{\xz,\dlt})
\times(N_\rh\scir N_\pi) \\ &= \tbigcup_{\rh,\dlt}
H_{\xz,\dlt}\times (N_\rh\scir N_\pi) \\ &= \tbigcup_{\rh,\dlt}
H_{\xz,\dlt}\times (K_{\xz,\dlt} \scir N_\pi)
\\ &= \tbigcup_{\rh,\dlt} H_{\xz,\dlt}\times
   [ K_{\xz,\dlt}\scir ( \tbigcup_{\gm \in\subs\Ps\pi}
   K_{\xz,\gm})]\\ &= \tbigcup_{\gm } \tbigcup_{\rh,\dlt}
   H_{\xz,\dlt}\times (K_{\xz,\dlt}\scir K_{\xz,\gm})\\ &=
   \tbigcup_{\gm } \tbigcup_{\dlt<\kai\xz} H_{\xz,\dlt}\times
   (K_{\xz,\dlt}\scir K_{\xz,\gm}) \\ &= \tbigcup_{\gm }
   R_{{\xz},{\gm}}\per
\end{align*} The first equality uses \eqref{Eq:cot12} and
\eqref{Eq:cot21}, the second uses \eqref{Eq:cot15}, the third uses
the distributivity of Cartesian multiplication, the fourth uses the
multiplication rules for cosets and the inclusion of $\ks\xz\dlt$ in
$\n\rh$ (because $\dlt$ is in $\cs\Ps\rh$, and (iv) holds, which
implies that the last equation in \eqref{Eq:cot18} holds), the fifth
uses the assumption of (iv), which implies that the last equation in
\eqref{Eq:cot18} holds with $\pi$ in place of $\rho$, the sixth uses
the distributivity of   complex group composition, the seventh uses
the fact that the sets $\Ps_\rh$ (for $\rh<\mu$) partition
$\kai\xz$, and the last uses the definition of $R_{\xz,\g}$.
Summarizing,
\begin{equation*}  R_{{\xy},{\lph}} \rp R_{{\yz},{\bt}}=\tbigcup_{\gm \in \subs \Ps\pi} R_{{\xz},{\gm}}\per\tag{24}\label{Eq:cot22}
\end{equation*}

In order to complete the derivation of (iii), it is necessary to
characterize the relations $R_{\xz,\g}$ such that
$\g\in\subs\Ps\pi$. First,
\begin{equation*}\m\pi=\vphi\xy\mo[\p\pi]=\vphi\xy\mo[\p\xi\scir\p\sgm]=\vphi\xy\mo[\ks\xy\alpha\scir\hs\yz\bt],
\tag{25}\label{Eq:cot23}
\end{equation*}
by \eqref{Eq:cot2} (with $\pi$ in place of $\rho$),
\eqref{Eq:cot20}, and \eqref{Eq:cot19}. Second,
\begin{equation*}\m\pi =
\vphi\xz\mo[\n\pi]=\vphi\xz\mo[\mmmyspace\tbigcup_{\g\in\subs\Ps\pi}\ks\xz
\dlt]=\mmmyspace\tbigcup_{\g\in\subs\Ps\pi}\vphi\xz\mo[\ks\xz
\dlt]=\tbigcup_{\g\in\subs\Ps\pi}\hs\xz
\dlt\comma\tag{26}\label{Eq:cot24}
\end{equation*}
by the assumption in (iv), which implies that the second and third
equations in  \eqref{Eq:cot18} hold, the distributivity of inverse
images, and \eqref{Eq:cot16}. Combine \eqref{Eq:cot23} and
\eqref{Eq:cot24} to arrive at the equivalence of the three
conditions
\begin{equation*}
\hs\xz \g\seq\vphi\xy\mo[\ks\xy\alpha\scir\hs\yz\bt]\comma\qquad
\hs\xz \g\seq\m\pi\comma \qquad \gm\in \subs\Ps\pi\per
\end{equation*}
The equivalence of the first and last formulas permits us to rewrite
\eqref{Eq:cot22} in the desired form of (iii):
\begin{equation*}R_{\xy, \alpha}\rp R_{\yz,\bt}
= \tbigcup\{R_{\xz,\g} : \g<\kai\xz\text{ and }\hs\xz \g\seq
\vphi\xy\mo[\ks\xy\alpha\scir\hs\yz\bt]\}\per
\end{equation*}

The implication from (iii) to (ii) follows from the definition of
 $A$ as the set of arbitrary unions of atomic relations (see Boolean Algebra Theorem~\ref{T:disj}),
 and the implication from (ii) to (i) is obvious.
Consider finally the implication from (i) to (iv). Under the
assumption of (i), the definition of $A$ implies that $R_{\xy,0}\rp
R_{\yz, 0}$ must be a union of atomic relations of the form
$R_{\xz,\zeta}$. In more detail, the inclusions and the equality
\[\ks\xy 0\seq \p 0,\qquad \hs\yz 0\seq\p 0,\qquad \n\rh\scir\n
0\scir\n 0=\n\rh
\]  (which hold by \eqref{Eq:cot4}, \eqref{Eq:cot6}, and the fact that $\n 0$ is the identity
coset), together with  \eqref{Eq:cot12}, imply  (without using the
hypothesis of (i)) that
\begin{equation*}\tag{27}\label{Eq:cot25}
R_{\xy, 0}\rp R_{\yz, 0}=\tbigcup_{\rh<\mu}\m \rh\times \n\rh\per
\end{equation*}
  The cosets $\m \rh$ and $\n \rh$ are  subsets of $\G x$ and  $\G
z$ respectively, for each $\rh<\mu$ (see \eqref{Eq:cot2} and
\eqref{Eq:cot3}), so the right-hand side---and therefore also the
left-hand side---of \eqref{Eq:cot25} must be a subset of the
rectangle $\gsq x z$.  The only atomic relations in $A$ that are not
disjoint from this rectangle are those of the  form $R_{\xz,\zeta}$.
Conclusion: there is a subset $\Ph$ of $\kai\xz$ such that
\begin{equation*}\tag{28}\label{Eq:cot26}
  R_{\xy, 0}\rp R_{\yz, 0}=\tbigcup_{\zeta\in\Ph}R_{\xz,\zeta}.
\end{equation*}

 The pair  $\pair {\e x\,}{\e z}$   belongs to the rectangle $\m
0\times \n 0$, and therefore also to the composition $R_{\xy,0}\rp
R_{\yz, 0}$, by \eqref{Eq:cot25}. The pair also belongs to the
rectangle
\[\hs\xz 0\times\ks\xz 0 = \h\xz\times K_{\xz}\comma \] and
therefore to the relation
\begin{equation*}\tag{29}\label{Eq:cot27}
  R_{\xz, 0}=\tbigcup_{\g<\kai\xz}\hs\xz \g\times\ks\xz \g\per
\end{equation*}
  The relations of the form $R_{\xz, \zeta}$ are pairwise disjoint,
and $R_{\xz, 0}$ is the only one  that contains the pair $\pair {\e
x\,}{\e z}$. In view of \eqref{Eq:cot26}, the only way  this can
happen is if $0$ is one of the indices in $\Ph$, so that
\begin{equation*}R_{\xz, 0}\seq R_{\xy, 0}\rp R_{\yz,0}
\tag{30}\label{Eq:cot28}\per\end{equation*}Use \eqref{Eq:cot27} and
\eqref{Eq:cot25} to rewrite \eqref{Eq:cot28} in the form
\begin{equation*}\tbigcup_{\g<\kai\xz}\hs\xz \g\times\ks\xz \g\seq
\tbigcup_{\rh<\mu}\m \rh\times \n\rh\per\tag{31}\label{Eq:cot28.0}
\end{equation*}

In view of Lemma~\ref{L:rect2}, the inclusion in \eqref{Eq:cot28.0}
implies that for every $\gm<\kai\xz$ there is a $\rh<\mu$ such that
\begin{equation*}\hs\xz \g\seq \m
\rh\qquad\text{and}\qquad\ks\xz \g\seq \n {\rh}\per\tag{32}
\label{Eq:cot29}\end{equation*} In particular, when $\g=0$, the
subgroup $\hs\xz 0$ is included in  $\m\rh$ for some $\rh<\mu$. This
inclusion forces the group identity element $e_x$ to belong to
$\m\rh$, since it belongs to  $\hs\xz 0$. The only coset of $\m 0$
that includes the group identity element is $\m 0$ itself, so
$\rh=0$, that is to say, \eqref{Eq:cot14} holds.

On the basis of \eqref{Eq:cot14} alone, we saw that a partition
$\langle\subs\Ps\rh:\rh<\mu\rangle$ of $\kai\xz$ exists for which
  \eqref{Eq:cot15} holds.  The derivations of  \eqref{Eq:cot15}--\eqref{Eq:cot18}
are based only on \eqref{Eq:cot13} and \eqref{Eq:cot14},  so these
statements hold in the present situation as well. It is evident from
\eqref{Eq:cot15} that the coset $\hs\xz \dlt$ is included in $\m
\rh$ for each
 $\dlt\in\subs\Ps\rh$, and therefore the  corresponding coset
$\ks\xz \dlt$ must be included in $\n\rh$ for each
$\dlt\in\subs\Ps\rh$ by \eqref{Eq:cot29}.
Thus,\begin{equation*}\tbigcup_{\dlt\in\subs\Ps\rh}\ks\xz\dlt\seq\n\rh\tag{33}\label{Eq:cot30}
\end{equation*}
 for each $\rh<\mu$.  The cosets $\ks\xz\dlt$ (for $\rh<\mu$ and
$\delta$ in $\cs\Ps\rh$) form a partition of $\G z$, as do
 the cosets $\n \rh$ (for $\rh<\mu$). These two facts force the
inclusion in \eqref{Eq:cot30} to be an equality. Use this equality
and the equivalence of the first and third formulas in
\eqref{Eq:cot18} to conclude that $\vphih\xy\rp\vphih\yz
=\vphih\xz$, as desired in (iv).

Turn now to the final assertion of the theorem.  If $A$ is closed
under relational composition, then (ii)   holds for all pairs $\pair
xz $ and $\pair yz$ in $\mc E$, so that (iv) must hold by the
equivalence of (ii) and (iv) proved above.

On the other hand, if (iv) holds for all pairs $\pair xy$ and $\pair
yz$ in $E$, then (ii) holds as well.  Combine this with
Lemma~\ref{L:emptycomp} to conclude that $A$ is closed under the
composition of any two of its atomic relations.  The elements in $A$
are just the various possible unions of atomic relations, and the
composition of two such unions is  a union of  compositions of
atomic relations, by the distributivity of relational composition,
and hence a union of elements in $A$. Since $A$ is closed under
arbitrary unions, it follows that the composition of any two
elements in $A$ is again an element in $A$, as was to be shown.
\end{proof}

 Notice that part (iii) of the previous theorem provides a concrete
way of computing the composition of any two relations
$R_{\xy,\alpha}$ and $R_{\yz,\bt}$ in terms of the structure of the
quotient group $\G y/(K_{\xy}\scir\h\yz)$, the mapping $\vphi\xy$,
and the cosets $\hs\xz\gm$. One first computes the complex product
$\ks\xy\lph\scir\hs\yz\bt$, and then the inverse image of this
complex under $\vphi\xy\mo$.  This inverse image is a union of
cosets  $\hs\xz\gm$, and one computes the set $\Ga$ of indices $\gm$
for which the corresponding cosets are part of this union.

It is natural to ask whether, in condition (i) of the Composition
Theorem, one can replace the condition that $R_{\xy, 0} \rp R_{\yz,
0}$ be in $A$ by the condition that $R_{\xy, \alpha} \rp R_{\yz,
\bt}$ be in $A$ for some $\alpha<\kai \xy$ and some $\bt<\kai\yx$.
It turns out that an additional hypothesis is needed for this to be
true. We will return to this matter at the end of the next section.

\begin{lemma}\label{L:domains} Suppose
$\pair x y $ and $\pair y z $ are in $\mc E$\per If $\vphi\yx
=\vphi\xy\mo$\,\co then
\begin{gather*}\h\xz\seq\vphi\xy\mo[K_{\xy}\scir\h\yz]\quad\text{and}\quad
\h\yz\seq\vphi\yx\mo[K_{\yx}\scir\h\xz]\\ \intertext{together imply}
\vphi\xy [\h\xy\scir\h\xz]=K_{\xy}\scir\h\yz=\h\yx\scir\h\yz.
\end{gather*}\end{lemma}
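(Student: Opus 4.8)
The plan is to dispose of the trivial equality and then establish the one substantive equality by a double inclusion. The hypothesis $\vphi\yx = \vphi\xy\mo$ says precisely that $\vphi\xy$ and $\vphi\yx$ are inverse isomorphisms, so, as recorded in the remarks preceding Convention~\ref{Co:convention}, we have $K_{\xy} = \h\yx$, $K_{\yx} = \h\xy$, and $\vphi\yx\mo = \vphi\xy$. The equality $K_{\xy}\scir\h\yz = \h\yx\scir\h\yz$ is then immediate, and the second hypothesis $\h\yz \seq \vphi\yx\mo[K_{\yx}\scir\h\xz]$ rewrites as $\h\yz \seq \vphi\xy[\h\xy\scir\h\xz]$. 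Everything therefore reduces to proving
\[
\vphi\xy[\h\xy\scir\h\xz] = K_{\xy}\scir\h\yz.
\]

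Write $N = \h\xy\scir\h\xz$ and $M = K_{\xy}\scir\h\yz$. Since $\h\xy$ and $\h\xz$ are normal subgroups of $\G x$, the complex product $N$ is the subgroup of $\G x$ they generate --- the smallest normal subgroup of $\G x$ containing both --- and, similarly, $M$ is the smallest normal subgroup of $\G y$ containing $K_{\xy}$ and $\h\yz$; in particular $N$ is a union of cosets of $\h\xy$ and $M$ a union of cosets of $K_{\xy}$, so that $\vphi\xy[N]$ and $\vphi\xy\mo[M]$ are meaningful. Just as in the proof of Composition Theorem~\ref{T:compthm} (the discussion of $\m 0$ and $\p 0$ there), $\vphi\xy$ and $\vphi\xy\mo$ are mutually inverse, inclusion-preserving bijections between the normal subgroups of $\G x$ containing $\h\xy$ and the normal subgroups of $\G y$ containing $K_{\xy}$; in particular $\vphi\xy\mo[M]$ is a normal subgroup of $\G x$ containing $\h\xy$, and $\vphi\xy[N]$ is a normal subgroup of $\G y$ containing $K_{\xy}$.

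Now for the two inclusions. From the first hypothesis $\h\xz \seq \vphi\xy\mo[M]$, together with $\h\xy \seq \vphi\xy\mo[M]$, and since $\vphi\xy\mo[M]$ is a subgroup, it must contain the subgroup $N$ generated by $\h\xy$ and $\h\xz$; applying $\vphi\xy$ to $N \seq \vphi\xy\mo[M]$ yields $\vphi\xy[N] \seq M$. For the reverse inclusion, the rewritten second hypothesis gives $\h\yz \seq \vphi\xy[N]$, and since $K_{\xy} \seq \vphi\xy[N]$ as well and $\vphi\xy[N]$ is a subgroup, it must contain the subgroup $M = K_{\xy}\scir\h\yz$ generated by $K_{\xy}$ and $\h\yz$; thus $M \seq \vphi\xy[N]$. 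Combining the two inclusions gives $\vphi\xy[N] = M$, which is the desired equality. I do not anticipate a genuine obstacle: the only points requiring care are the already-used fact that $\vphi\xy$ (with inverse $\vphi\xy\mo$) is an inclusion-preserving isomorphism between the interval of normal subgroups above $\h\xy$ in $\G x$ and the interval above $K_{\xy}$ in $\G y$, and the elementary observation that a subgroup containing two normal subgroups $A$ and $B$ contains their complex product $A\scir B$.
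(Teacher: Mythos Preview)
Your proof is correct and follows essentially the same approach as the paper's: both establish the equality by a double inclusion, using that $\vphi\xy\mo[M]$ is a normal subgroup of $\G x$ containing $\h\xy$ (hence $N$), and that $\vphi\xy[N]$ is a normal subgroup of $\G y$ containing $K_{\xy}$ (hence $M$). The only cosmetic difference is that you rewrite the second hypothesis as $\h\yz\seq\vphi\xy[N]$ at the outset and argue both inclusions directly, whereas the paper derives the second inclusion by first arguing symmetrically with the roles of $x$ and $y$ reversed and then substituting $\vphi\yx=\vphi\xy\mo$, $\h\yx=K_{\xy}$, $K_{\yx}=\h\xy$.
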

\begin{proof}
The function  $\vphi\xy$  is  an isomorphism from $\G x/\h \xy$ onto
$\G y/K_{\xy}$\co and the complex product $K_{\xy}\scir\h\yz$ is a
normal subgroup of $\G y$ that  includes $K_{\xy}$. Therefore, the
inverse image
\begin{equation*}\label{Eq:domains1}\tag{1}\vphi\xy\mo[K_{\xy}\scir\h\yz]
\end{equation*}
is a normal subgroup of  $\G x$  that includes
$\vphi\xy\mo[K_{\xy}]=\h\xy$.    By assumption, \eqref{Eq:domains1}
also includes $\h\xz$, so
\begin{align*} \h\xy\scir\h\xz&\seq
\vphi\xy\mo[K_{\xy}\scir\h\yz].\\ \intertext{Apply $\vphi\xy$ to
both sides of  this inclusion to obtain}
\vphi\xy[\h\xy\scir\h\xz]&\seq
K_{\xy}\scir\h\yz.\label{Eq:domains2}\tag{2}\\ \intertext{The same
argument with the roles of $x$ and $y$ reversed yields}
 \vphi\yx[\h\yx\scir\h\yz]&\seq
K_{\yx}\scir\h\xz.\label{Eq:domains3}\tag{3}
\end{align*}
The assumption $\vphi\yx=\vphi\xy\mo$ implies that
\begin{equation*}\label{Eq:domains5}\tag{4}\h\yx=K_{\xy}\qquad
\text{and}\qquad K_{\yx}=\h\xy\end{equation*}
 (see Convention~\ref{Co:convention}).  Use these three
equalities to rewrite the inclusion in  \eqref{Eq:domains3} as
\begin{equation*}\tag{5}\label{Eq:domains6}\vphi\xy\mo[K_{\xy}\scir\h\yz]\seq
\h\xy\scir\h\xz.
\end{equation*} Combine \eqref{Eq:domains6} with \eqref{Eq:domains2} and \eqref{Eq:domains5} to  arrive
at the desired conclusion.
\end{proof}

\begin{lemma}\label{L:sfimage} If $\vphi\yx
=\vphi\xy\mo$ for all pairs $\pair xy$ in $\mc E$\comma and if
\[\vphi\xy [\h\xy\scir\h\xz]=K_{\xy}\scir\h\yz\] for all   $\pair xy$ and $\pair yz$ in $\mc E$\comma
then
\begin{equation*}
\vphi\yz [K_{\xy}\scir\h\yz]=K_{\xz}\scir
K_{\yz}\qquad\text{and}\qquad \vphi\xz[\h\xy\scir\h\xz]=K_{\xz}\scir
K_{\yz}\per
\end{equation*}
 \end{lemma}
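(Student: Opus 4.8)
The plan is to obtain both equalities from the second hypothesis of the lemma, namely $\vphi\xy[\h\xy\scir\h\xz]=K_{xy}\scir\h\yz$, applied not to the given ordered triple but to two suitably \emph{permuted} triples of indices, and then to rewrite the subgroups that appear by means of Convention~\ref{Co:convention} together with the commutativity of complex products of normal subgroups.

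First I would assemble the elementary facts. Since $\pair xy$ and $\pair yz$ lie in $\mc E$ and $\mc E$ is an equivalence relation, all of $\pair xz$, $\pair zx$, $\pair zy$, $\pair yx$ lie in $\mc E$ as well, so every instance of the hypotheses that the argument uses is legitimate. Because $\vphi{vu}=\vphi{uv}\mo$ holds for \emph{every} pair in $\mc E$, Convention~\ref{Co:convention} applies to each of the pairs $\pair xy$, $\pair yz$, $\pair xz$ and yields
\[\h\yx=K_{xy},\qquad \h\zy=K_{yz},\qquad \h\zx=K_{xz}.\]
Finally I would recall the standard group-theoretic fact that the complex product of two normal subgroups of a group is again a normal subgroup, and in particular is commutative; this will be used to interchange the two factors in products such as $\h\xy\scir\h\xz$ and $K_{xz}\scir K_{yz}$.

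For the second equality I would apply the second hypothesis to the triple $\trip xzy$ in place of $\trip xyz$ (legitimate since $\pair xz$ and $\pair zy$ are in $\mc E$), obtaining $\vphi\xz[\h\xz\scir\h\xy]=K_{xz}\scir\h\zy$. Interchanging the two factors on the left and substituting $\h\zy=K_{yz}$ on the right turns this into $\vphi\xz[\h\xy\scir\h\xz]=K_{xz}\scir K_{yz}$, as desired. For the first equality I would apply the same hypothesis to the triple $\trip yzx$ (legitimate since $\pair yz$ and $\pair zx$ are in $\mc E$), obtaining $\vphi\yz[\h\yz\scir\h\yx]=K_{yz}\scir\h\zx$. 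Using $\h\yx=K_{xy}$ and commutativity the left-hand side equals $\vphi\yz[K_{xy}\scir\h\yz]$, while using $\h\zx=K_{xz}$ and commutativity the right-hand side equals $K_{xz}\scir K_{yz}$; hence $\vphi\yz[K_{xy}\scir\h\yz]=K_{xz}\scir K_{yz}$, as desired.

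There is no genuine obstacle here; the content is pure bookkeeping. The only points requiring care are the choice of the permuted index triples $\trip xzy$ and $\trip yzx$ to which the hypothesis is applied, and the systematic use of Convention~\ref{Co:convention} to replace each ``reversed'' normal subgroup $H_{vu}$ by the corresponding ``forward'' one $K_{uv}$; once those substitutions are in place, commutativity of the complex products finishes the argument.
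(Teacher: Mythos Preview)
Your proposal is correct and follows essentially the same approach as the paper: both you and the paper apply the second hypothesis to the permuted triples $\trip xzy$ and $\trip yzx$, and then use Convention~\ref{Co:convention} (the identifications $\h\yx=K_{\xy}$, $\h\zy=K_{\yz}$, $\h\zx=K_{\xz}$) together with commutativity of complex products of normal subgroups to rewrite both sides. The only cosmetic difference is the order in which the two equations are treated.
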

\begin{proof}    To derive the
first  equation, observe that if the pairs $\pair xy$ and $\pair yz$
are in $\mc E$ then so are the pairs $\pair xz$ and $\pair zx$. Use
the assumption that $\vphi\yx =\vphi\xy\mo$,
Convention~\ref{Co:convention}, the commutativity of normal
subgroups, and the hypotheses of the lemma (with $y$, $z$, and $x$
in place of $x$, $y$, and $z$ respectively) to arrive at
\[\vphi\yz[\kh]=\vphi\yz[\h\yx\scir\h\yz]
=\vphi\yz[\h\yz\scir\h\yx]=K_{\yz}\scir\h\zx=K_{\yz}\scir
K_{\xz}\per\] An entirely analogous argument yields
\[\vphi\xz[\hh]=\vphi\xz[\h\xz\scir\h\xy]=K_{\xz}\scir\h\zy=K_{\xz}\scir K_{\yz}\per
\]
\end{proof}

\begin{theorem}[Image Theorem]\label{T:domainofmap}  If $A$ is
closed under converse and composition\co then
\begin{gather*}
\vphi\xy [\h\xy\scir\h\xz]=K_{\xy}\scir\h\yz\comma\qquad \vphi\yz
[K_{\xy}\scir\h\yz]=K_{\xz}\scir K_{\yz}\comma\\
\vphi\xz[\h\xy\scir\h\xz]=K_{\xz}\scir K_{\yz}
\end{gather*}
for all $\pair x y$ and $\pair y z$ in $\mc E$\po In other words,
\begin{enumerate}
\item[(i)] $\vphih\xy$ maps  $\G x/(\h\xy\scir\h\xz)$
isomorphically to  $\G y/(K_{\xy}\scir\h\yz)$\co
\item[(ii)]
$\vphih\yz$ maps  $\G y/(K_{\xy}\scir\h\yz)$ isomorphically to $\G
z/(K_{\xz}\scir K_{\yz})$\co
\item[(iii)]
$\vphih\xz$ maps  $\G x/(\h\xy\scir\h\xz)$ isomorphically to $\G
z/(K_{\xz}\scir K_{\yz})$\po
\end{enumerate}
\end{theorem}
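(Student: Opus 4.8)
The plan is to use Converse Theorem~\ref{T:convthm1} and Composition Theorem~\ref{T:compthm} to translate the two closure hypotheses into statements about the quotient isomorphisms, and then feed those statements into Lemmas~\ref{L:domains} and~\ref{L:sfimage}.

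First, since $A$ is closed under converse, Converse Theorem~\ref{T:convthm1} gives $\vphi\yx=\vphi\xy\mo$ for every pair $\pair xy$ in $\mc E$. Since $A$ is closed under relational composition, Composition Theorem~\ref{T:compthm} --- through its equivalent condition (iv) --- gives, for all pairs $\pair xy$ and $\pair yz$ in $\mc E$, the inclusion $\h\xz\seq\vphi\xy\mo[\kh]$. Because $\mc E$ is an equivalence relation, whenever $\pair xy$ and $\pair yz$ lie in $\mc E$ so do $\pair yx$ and $\pair xz$; applying condition (iv) to the pairs $\pair yx$ and $\pair xz$ (i.e.\ with $y,x,z$ in the roles of $x,y,z$) yields the companion inclusion $\h\yz\seq\vphi\yx\mo[K_{\yx}\scir\h\xz]$.

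With the equality $\vphi\yx=\vphi\xy\mo$ in hand, both hypotheses of Lemma~\ref{L:domains} are now satisfied, so that lemma delivers $\vphi\xy[\hh]=\kh=\h\yx\scir\h\yz$, which is the first of the three displayed equations. As this holds for all pairs $\pair xy$ and $\pair yz$ in $\mc E$, and $\vphi\yx=\vphi\xy\mo$ holds for all pairs, Lemma~\ref{L:sfimage} applies directly and produces the remaining two equations $\vphi\yz[\kh]=\kk$ and $\vphi\xz[\hh]=\kk$.

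For the reformulation (i)--(iii): $\hh$ is a normal subgroup of $\G x$ containing $\h\xy$, so $(\hh)/\h\xy$ is a normal subgroup of $\G x/\h\xy$, and by the first equation its image under the isomorphism $\vphi\xy$ is exactly $(\kh)/K_{\xy}$; equivalently $\vphi\xy\mo[\kh]=\hh$, so passing to quotients shows that $\vphi\xy$ induces an isomorphism $\vphih\xy$ from $\G x/(\hh)$ onto $\G y/(\kh)$, which is (i). Assertions (ii) and (iii) follow in the same way from the second and third equations respectively. The only point requiring genuine care is invoking the symmetry of $\mc E$ to obtain the second inclusion $\h\yz\seq\vphi\yx\mo[K_{\yx}\scir\h\xz]$, since Lemma~\ref{L:domains} needs both inclusions at once, not merely the one furnished directly by condition (iv) for the ordered pair $(\pair xy,\pair yz)$; everything else is routine bookkeeping.
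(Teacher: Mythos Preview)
Your proof is correct and follows essentially the same route as the paper's: closure under converse yields $\vphi\yx=\vphi\xy\mo$ via the Converse Theorem, closure under composition yields the two inclusions needed for Lemma~\ref{L:domains} via condition~(iv) of the Composition Theorem (you are a bit more explicit than the paper in explaining that the second inclusion comes from permuting indices to $y,x,z$), and then Lemmas~\ref{L:domains} and~\ref{L:sfimage} finish the three displayed equations. Your derivation of (i)--(iii) is also correct; the paper simply cites the definition of $\vphih\xy$, $\vphih\yz$, $\vphih\xz$ from Composition Theorem~(iv) together with the three equations, which amounts to the same thing.
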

\begin{proof}  Assume $A$ is closed under converse and composition, and consider pairs $\pair xy$ and $\pair yz$
in $\mc E$.   The assumed closure of $A$ under converse means that
part (iii) of the  Converse Theorem~\ref{T:convthm1} may be applied
to obtain
\begin{equation*}\tag{1}\label{Eq:image1}\vphi\yx =\vphi\xy\mo\per
\end{equation*}  The assumed closure of $A$ under composition means that part (iv) of the Composition Theorem~\ref{T:compthm} may
be applied to obtain
\begin{equation*}
\h\xz\seq\vphi\xy\mo[K_{\xy}\scir\h\yz]\qquad\text{and}\qquad \h\yz
\seq\vphi\yx\mo[K_{\yx}\scir\h\xz]\per
\end{equation*}   Invoke Lemma~\ref{L:domains} to obtain
\begin{equation*}\tag{2}\label{Eq:image2}
\vphi\xy [\h\xy\scir\h\xz]=K_{\xy}\scir\h\yz\per
\end{equation*} This argument establishes the first equation for all pairs $\pair xy$ and $\pair yz$ in $\mc
E$.   Use Lemma~\ref{L:sfimage} to obtain the second and third
equations.

The mappings $\vphih\xy$ and $\vphih\xz$ are defined to be the
isomorphisms induced on the quotient group of $\G x$ modulo
$\vphi\xy\mo[K_{\xy}\scir\h\yz]$ by the isomorphisms $\vphi\xy$ and
$\vphi\xz$ respectively, and  $\vphih\yz$ is defined to be the
isomorphism  induced on the quotient group of $\G y$ modulo $
K_{\xy}\scir\h\yz$ by the isomorphism $\vphi\yz$, by part (iv) of
the Composition Theorem~\ref{T:compthm}. In view of the preceding
proof, this immediately gives assertions (i)--(iii) of the theorem.
\end{proof}

\section{Group frames}

In the preceding section,   necessary and sufficient conditions are
given for a Boolean algebra $A$ of binary relations constructed from
a group pair $\mc F$ to  contain the identity relation and to be
closed under the operations of relational converse and composition.
In each case, one of these conditions is formulated strictly in
terms of the  quotient isomorphisms.  It is natural to single out
the group pairs that satisfy these quotient isomorphism conditions,
because precisely these group pairs lead to algebras of binary
relations and in fact to measurable relation algebras.

\begin{definition}\label{D:cosfra} A \textit{group} \textit{frame}  is a group pair
\[\mc F=(\langle \G x:x\in
I\,\rangle\smbcomma\langle\vph_\xy:\pair x y\in \mc E\,\rangle)\]
satisfying the following  \textit{frame conditions} for all pairs
$\pair xy$ and $\pair yz$ in $\mc E$\per
\begin{enumerate}
\item[(i)]
$\vphi {xx}$ is the identity automorphism of $\G x/\{\e x\}$ for all
$x$\per
\item [(ii)]
$\vphi \yx=\vphi \xy\mo$\per
\item[(iii)] $\vphi \xy[\h \xy\scir\h\xz] =K_{\xy}\scir\h \yz$\per
\item[(iv)] $\vphih \xy\rp\vphih \yz=\vphih \xz$\po
\end{enumerate}\qed\end{definition}

Given a group frame \ $\mc F$, let $A$  be the collection of all
possible unions of relations of the form $R_{\xy, \alpha}$ for
$\pair x y$ in $\mc E$ and $\alpha <\kai\xy$.  Call $A$ the set of
\textit{frame relations} constructed from $\mc F$.

\begin{theorem}[Group Frame Theorem]\label{T:closed} If $\mc F$ is a group
frame\co then the set  of frame relations
constructed from $\mc F$ is the universe of a complete\co atomic\co
measurable set relation algebra   with   base set and unit
\[U=\tbigcup  \{\G x:x\in I\}\qquad\text{and}\qquad E=\tbigcup\{\cs
Gx\times\cs Gy:\pair xy\in\mc E\}\] respectively\per The atoms in
this algebra are the relations of the form $R_{\xy, \alpha}$\comma
and the subidentity atoms are the relations of the form $R_{\xx,
0}$\per The measure of $R_{\xx, 0}$ is just the cardinality of the
group $\cs Gx$\per
\end{theorem}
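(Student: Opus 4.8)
The plan is to read off each assertion of the theorem from one of the structural results of the preceding section, and then to verify measurability by a direct computation. One first checks that $E$ is an equivalence relation on $U$: reflexivity because $(x,x)\in\mc E$, symmetry because $\mc E$ is symmetric, and transitivity because $\mc E$ is transitive and the groups $\G x$ are pairwise disjoint, which forces the group ``in the middle'' to be uniquely determined; hence $\full E$ is a genuine full set relation algebra. Boolean Algebra Theorem~\ref{T:disj} already supplies the Boolean content: $A$ is a complete, atomic Boolean algebra of subsets of $E$, with unit $E$, whose atoms are exactly the relations $R_{\xy,\alpha}$. So what remains for the first part is to show that $A$ contains $\id U$ and is closed under relational converse and composition; granting this, $A$ is a subalgebra of $\full E$, hence a set relation algebra, and, its Boolean reduct being complete and atomic, it is a complete, atomic set relation algebra whose atoms are the $R_{\xy,\alpha}$.

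Next I would match the four frame conditions against the hypotheses of Theorems~\ref{T:identthm1}, \ref{T:convthm1}, and~\ref{T:compthm}. Frame condition~(i) is precisely condition~(iii) of the Identity Theorem for every $x$, so $\id U\in A$ and in fact $R_{\xx,0}=\idd{\G x}$ for each $x$. Frame condition~(ii) is precisely condition~(iii) of the Converse Theorem for every pair, so $A$ is closed under converse. For composition, frame condition~(iii) gives $\vphi\xy\mo[K_{\xy}\scir\h\yz]=\h\xy\scir\h\xz$; in particular $\h\xz$ is contained in this normal subgroup, and the induced quotient isomorphisms $\vphih\xy$, $\vphih\yz$, $\vphih\xz$ in frame condition~(iv) are formed modulo exactly the subgroups occurring in condition~(iv) of the Composition Theorem, so that frame conditions~(iii) and~(iv) together amount to that condition~(iv) for all pairs $\pair xy$, $\pair yz$ in $\mc E$; hence $A$ is closed under composition. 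This establishes the first part of the theorem, including the identification of the atoms, the base set, and the unit.

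For measurability, note that $\id U=\bbigcup{x\in I}R_{\xx,0}$ and that the atoms of $A$ are pairwise disjoint, so the subidentity atoms of $A$ are exactly the relations $R_{\xx,0}=\idd{\G x}$. Fix $x$; since the unit of $A$ is $E$, the square on this atom is $\idd{\G x}\rp E\rp\idd{\G x}$, which equals $\cs Gx\times\cs Gx$ because $\cs Gx\times\cs Gx\seq E$. By Partition Lemma~\ref{L:i-vi} this square is the union of the atoms $R_{\xx,\alpha}$ for $\alpha<\kai\xx$. Now frame condition~(i) forces $\h\xx=\{\e x\}$, so the cosets $\hs\xx\gm$ are the singletons of $\G x$, $\ks\xx\gm=\vphi\xx(\hs\xx\gm)=\hs\xx\gm$, and, writing $\hs\xx\alpha=\{g_\alpha\}$, one computes $R_{\xx,\alpha}=\{\pair g{g\scir g_\alpha}:g\in\G x\}=\rr{g_\alpha}$, the image of $g_\alpha$ under the Cayley representation of $\G x$. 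Consequently $\kai\xx=|\G x|$; each $\rr{g_\alpha}$ is a bijection of $\G x$, hence a non-zero functional element, since $\rr{g_\alpha}\mo\rp\rr{g_\alpha}=\rr{g_\alpha\mo}\rp\rr{g_\alpha}=\rr{\e x}=\idd{\G x}\seq\id U$; and these $|\G x|$ relations are pairwise disjoint with union $\cs Gx\times\cs Gx$. Therefore $R_{\xx,0}$ is measurable with measure $|\G x|$, so the algebra is measurable.

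I expect the only genuinely delicate steps to be bookkeeping: confirming from frame condition~(iii) that the quotients on which the induced isomorphisms $\vphih$ are defined in frame condition~(iv) are the same as those in condition~(iv) of the Composition Theorem; and checking that the square $\idd{\G x}\rp E\rp\idd{\G x}$ collapses to $\cs Gx\times\cs Gx$ and not to a larger union of rectangles $\cs Gx\times\cs Gy$ with $\pair xy\in\mc E$. Everything else is a direct appeal to the Boolean Algebra, Identity, Converse, and Composition Theorems of the preceding section.
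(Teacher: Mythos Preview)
Your proposal is correct and follows essentially the same route as the paper's proof: invoke the Boolean Algebra Theorem for the complete atomic Boolean structure, then match frame conditions (i)--(iv) against the Identity, Converse, and Composition Theorems to obtain closure, and finally compute the square on $R_{\xx,0}$ and identify the atoms beneath it with the Cayley representation of $\G x$ to read off the measure. Your write-up is in fact more explicit than the paper's in a couple of places (checking that $E$ is an equivalence relation, and verifying functionality of $R_{\xx,\alpha}$ via $\rr{g_\alpha}\mo\rp\rr{g_\alpha}=\idd{\G x}$), but these are elaborations rather than a different strategy.
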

 \begin{proof} Let $A$ be the set of frame relations constructed from $\mc F$.
 This set  is the universe of a complete and atomic Boolean
algebra of binary relations with base set $U$ and unit $E$, and its
atoms are the relations of the form $R_{\xy,\alpha}$\comma by
Boolean Algebra Theorem~\ref{T:disj}. The identity relation $\id U$
is in $A$, and the subidentity atoms are the relations of the form
$R_{\xx, 0}$, by Theorem~\ref{T:disj}, Identity
Theorem~\ref{T:identthm1}, and frame condition (i). The closure of
$A$ under the operations of converse and composition follows from
Converse Theorem~\ref{T:convthm1}, Composition
Theorem~\ref{T:compthm}, and frame conditions (ii)--(iv).

The measure of a subidentity atom $R_{\xx, 0}$ is, by definition,
the number of non-zero functional atoms below the square \[R_{\xx,
0}\rp E\rp R_{\xx, 0}=\cs Gx\times\cs Gx\per\] These non-zero
functional atoms are just the relations $R_{\xx, \alpha}$ for
$\alpha< \kai\xx$, that is to say, they are just the Cayley
representations of the elements in $\cs Gx$, by Partition
Lemma~\ref{L:i-vi}. Consequently, there are as many of them as there
are elements in $\cs Gx$.
\end{proof}

The  theorem justifies the following definition.

\begin{definition}\label{D:gradef} Suppose that $\mc F$ is a group frame\po The
 set \ra\ constructed from $\mc F$ in   Group Frame Theorem~\ref{T:closed} is
 called the (\textit{full}) \textit{\ggra} on $\mc F$ and is
 denoted by $\cra G F$ \opar and its universe by  $\craset G
 F$\cpar\po A \textit{general group relation algebra} is defined
 to be an algebra that is embeddable into a full group relation
 algebra\po \qed
\end{definition}

The task of verifying  that a given group pair satisfies the frame
conditions, and therefore yields a full group relation algebra, that
is to say, it yields an example of a measurable relation algebra,
can be complicated and tedious.  Fortunately, a few simplifications
are possible. To describe them, it is helpful to assume that the
group index set $I$ is linearly ordered, say by a relation $\,<\,$.
Roughly speaking, under the assumption of condition (i), condition
(ii) holds in general just in case it holds for each pair $\pair xy$
in $\mc E$ with $x<y$. In other words, under the assumption of (i),
it is not necessary to check condition (ii) for the case $x=y$, nor
is it necessary to check both cases $\pair xy$ and $\pair yx$ when
$x\neq y$. Also, under the assumption of condition (i) and the
modified form of condition (ii) just described, conditions (iii) and
(iv) will hold in general if they hold for all pairs $\pair xy$ and
$\pair yx$ in $\mc E$ with $x<y<z$. In other words, under the
assumption of
 (i) and the modified  (ii), it is not necessary to check
conditions (iii) and (iv) in any case in which at least two of the
three indices $x$, $y$, and $z$ are equal, nor is it necessary to
check all six   permutations of an appropriate triple $\trip xyz$ of
distinct indices.  Here is the precise formulation of the theorem.

\begin{theorem}\label{T:simpfr} A group pair $\mc F$   is a group
frame if and only if the following four conditions are satisfied\per
\begin{enumerate}
\item[(i)]
$\vphi {xx}$ is the identity automorphism of $\G x/\{\e x\}$ for
every $\wx$ in $I$\per
\item [(ii)]
$\vphi \yx=\vphi \xy\mo$ for every pair $ \pair x y$ in $\mc E$ with
$x<y$\per
\item[(iii)] $\vphi \xy[\h
\xy\scir\h \xz]=K_{\xy}\scir\h\yz$ and $\vphi \yz[K_{\xy}\scir\h
\yz]=K_{\xz}\scir K_{\yz}$  for all pairs $\pair xy$ and $\pair yz$
in $\mc E$ with $x<y<z$\per
\item[(iv)] $\vphih \xy\rp\vphih \yz=\vphih \xz$ for all pairs $\pair xy$ and $\pair yz$ in
$\mc E$ with $x<y<z$\per
\end{enumerate}
\end{theorem}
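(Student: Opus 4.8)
The plan is to dispose of the ``only if'' direction in a sentence---conditions (i), (ii) and (iv) of the theorem are literal specializations of the matching frame conditions in Definition~\ref{D:cosfra}, and the second equation in condition (iii) is, for a group frame, the first conclusion of Lemma~\ref{L:sfimage} (whose two hypotheses are precisely frame conditions (ii) and (iii))---and then to spend the real effort on the converse: assuming the four restricted conditions, I would verify conditions (i)--(iv) of Definition~\ref{D:cosfra} for \emph{all} pairs $\pair xy$ and $\pair yz$ in $\mc E$ (such a triple automatically has $\pair xz\in\mc E$, as $\mc E$ is an equivalence relation).

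First I would upgrade condition (ii): for $x=y$ it follows from (i), since the identity automorphism is its own inverse; for $x<y$ it is the hypothesis; and for $y<x$ it follows by inverting the hypothesis for the pair $\pair yx$. With condition (ii) available for every pair, Convention~\ref{Co:convention} applies uniformly---I would take the coset enumeration of the pair with the smaller first index as primary and let it induce the other---so that $\h\yx=K_{\xy}$, $K_{\yx}=\h\xy$, and $\vphi\yx$ carries $\ks\xy\g$ to $\hs\xy\g$, for every $\pair xy$ in $\mc E$.

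Next I would establish the single equation $\vphi\xy[\h\xy\scir\h\xz]=K_{\xy}\scir\h\yz$ of Definition~\ref{D:cosfra}(iii) for every triple. When two of $x,y,z$ coincide the equation collapses---using (i), which forces $\h\xx=\{\e x\}$ and makes $\vphi\xx$ fix every subset of $\G x$, together with the identities of the previous paragraph---to the triviality that an isomorphism sends an identity coset to an identity coset, so I would clear those cases first. For distinct indices, let $a<b<c$ be their increasing rearrangement. Applying $\vphi\xy\mo$ to the equation and replacing $K_{\xy},\h\xy$ by $\h\yx,K_{\yx}$ shows that the instance for $\trip yxz$ is equivalent to the instance for $\trip xyz$, so it suffices to verify one representative of each of the classes $\{\trip abc,\trip bac\}$, $\{\trip bca,\trip cba\}$, $\{\trip acb,\trip cab\}$. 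The first representative is exactly the first equation of the theorem's condition (iii); the second, after the same translation and the commuting of normal subgroups, is exactly its second equation. The class $\{\trip acb,\trip cab\}$ is where I expect the one non-formal step: I would invoke the theorem's condition (iv) for $a<b<c$ and evaluate $\vphih{ab}\rp\vphih{bc}=\vphih{ac}$ at the identity coset of $\G a$ modulo $\vphi{ab}\mo[K_{ab}\scir H_{bc}]$; the first equation of condition (iii) identifies that subgroup with $H_{ab}\scir H_{ac}$, the second identifies $\vphi{bc}[K_{ab}\scir H_{bc}]$ with $K_{ac}\scir K_{bc}$, and the displayed identity then reads $\vphi{ac}[H_{ac}\scir H_{ab}]=K_{ac}\scir K_{bc}$, which is the $\trip acb$ instance once $H_{cb}$ is identified with $K_{bc}$ via Convention~\ref{Co:convention}. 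With Definition~\ref{D:cosfra}(iii) secured for all triples, Lemma~\ref{L:sfimage} then supplies the two companion image equations for all triples, so every induced map $\vphih\xy,\vphih\yz,\vphih\xz$ occurring in condition (iv) has the domains and codomains recorded in Image Theorem~\ref{T:domainofmap}.

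Finally I would derive $\vphih\xy\rp\vphih\yz=\vphih\xz$ of Definition~\ref{D:cosfra}(iv) for all triples by the same symmetry device, one level up. The degenerate cases again reduce, via (i), to the induced maps being identities or identifications. For distinct indices, the matched domains and codomains together with $\vphi\yx=\vphi\xy\mo$ give $\vphih\yx=\vphih\xy\mo$ and $\vphih\zy=\vphih\yz\mo$; inverting the equation for $\trip xyz$, or pre- or post-composing it with these inverses, then yields the instances for $\trip zyx$, $\trip yxz$, and $\trip xzy$. Since the transpositions $\trip xyz\mapsto\trip yxz$ and $\trip xyz\mapsto\trip xzy$ generate all permutations of the triple, iterating propagates the single verified instance $\trip abc$ (which is the theorem's condition (iv)) to all six orderings, completing the proof that $\mc F$ satisfies Definition~\ref{D:cosfra}. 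The routine burden throughout is the degenerate cases and the $\h\yx$-versus-$K_{\xy}$ bookkeeping forced by Convention~\ref{Co:convention}; the genuinely substantive point is the use of condition (iv) to reach the $\trip acb$ instance of condition (iii)---which is exactly why the theorem's condition (iii) must be stated with two equations rather than one.
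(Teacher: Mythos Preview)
Your proposal is correct and follows essentially the same route as the paper's proof: upgrade condition (ii) to all pairs, clear the degenerate triples using (i) and (ii), derive the missing instance of frame condition (iii) (your $\trip acb$ case, the paper's equation (4)) by reading the codomain of $\vphih{ac}$ off from condition (iv) together with the two equations of condition (iii), and finally propagate condition (iv) to all orderings via composition with the inverses $\vphih\yx=\vphih\xy\mo$. Your packaging of the six orderings of a distinct triple into three equivalence classes under $\trip xyz\leftrightarrow\trip yxz$, each handled by one of the two equations in (iii) or by (iv), is a slightly cleaner bookkeeping device than the paper's, and your treatment of the ``only if'' direction---invoking Lemma~\ref{L:sfimage} for the second equation in (iii)---is more careful than the paper's one-sentence dismissal, but the substance is the same.
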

\begin{proof}By its very definition, a frame
must satisfy conditions (i)--(iv) of the theorem.  To establish the
reverse implication, suppose that
\[\mc F=(\langle \G x:x\in
I\,\rangle\smbcomma\langle\vph_\xy:\pair x y\in \mc E\,\rangle)\] is
a group pair satisfying conditions (i)--(iv) of the theorem. It must
be shown that the four frame conditions  hold. Obviously,
 the first frame condition holds,  since it coincides with
condition (i) of the theorem. To verify the second frame condition,
assume that $\pair xy$ is a pair in $\mc E$.   If $\wx=\wy$, then
$\vphi\xy$ is the identity automorphism, by condition (i) of the
theorem, and therefore
\begin{equation*}\tag{1}\label{Eq:sf1}
  \vphi\yx=\vphi\xy\mo\per
\end{equation*} If $x<y$, then \eqref{Eq:sf1} holds, by condition (i)
of the theorem.  If $y<x$, then $\vphi\xy=\vphi\yx\mo$, by condition
(i) of the theorem (with the roles of $x$ and $y$ reversed), so
\eqref{Eq:sf1} must also hold.

Turn now to the task of verifying the last two frame conditions.
Assume that $\pair xy$ and $\pair yz$ are pairs in $\mc E$, and
consider first the case when $x=y$.  The mapping $\vphi\xy$ is then
the identity automorphism of $\G x/\{e_x\} $, by condition (i), so
that
\begin{equation*}%
  \h \xy=\h\xx=\{\cs ex\}=K_{\xx}=K_{\xy}\comma\qquad
\h\xz=\h\yz\comma\qquad K_{\xz}=K_{\yz}\comma
\end{equation*}
and therefore
\begin{equation*}
\h\xy\scir\h\xz=\h\xz\comma\quad
K_{\xy}\scir\h\yz=\h\yz=\h\xz\comma\quad K_{\xz}\scir
K_{\yz}=K_{\yz}\scir K_{\yz}=K_{\yz}\per
\end{equation*}
It follows that
\begin{gather*}
\vphi\xy[\h\xy\scir\h\xz]=\vphi\xx[\h\xz]=\h\xz=K_{\xy}\scir\h\yz\\
\intertext{and}
\vphi\yz[K_{\xy}\scir\h\yz]=\vphi\yz[\h\yz]=K_{\yz}=K_{\xz}\scir
K_{\yz}\per
\end{gather*}
For the same reasons, the isomorphism $\vphih\xy$ induced by
$\vphi\xy$ on the quotient group $\G x/(\h\xy\scir\h\xz) $ must
coincide with the identity automorphism of $\G x/\h\xz$, the
isomorphism $\vphih\yz$ induced by $\vphi\yz$ on the quotient group
$\G y/(K_{\xy}\scir\h\yz) $ must coincide with $\vphi\yz$, and the
isomorphism $\vphih\xz$ induced by $\vphi\xz$ on the quotient group
$\G x/(\h\xy\scir\h\xz) $ must coincide with the isomorphism
$\vphi\xz$\per Consequently,
\[\vphih\xy\rp\vphih\yz=\vphi\yz=\vphi\xz=\vphih\xz\per
\]

The case when $y=z$ is treated in a completely symmetric fashion.
Consider, next, the case when $x=z$\per  The mapping $\vphi\xz$ is
then the identity automorphism of $\G x/\{e_x\} $, by condition (i),
so that
\begin{equation*}
  \h \xz=\h\xx=\{\cs ex\}=K_{\xx}=K_{\xz}\comma\qquad
  \h\yz=\h\yx\comma\qquad K_{\yz}=K_{\yx}\comma
\end{equation*}
and therefore
\begin{gather*}
\h\xy\scir\h\xz=\h\xy\comma\quad
K_{\xy}\scir\h\yz=K_{\xy}\scir\h\yx=K_{\xy}\scir
  K_{\xy} =\\
  K_{\xy}=\h\yx=\h\yz\comma\quad K_{\xz}\scir K_{\yz}=K_{\yz}=K_{\yx}\per
\end{gather*}
In the second string of equations, use is being made of the fact
that the second frame condition holds, and therefore $K_{\xy}=\h\yx$
(see the remark preceding Theorem~\ref{T:convthm1}). It follows that
\begin{gather*}
\vphi\xy[\h\xy\scir\h\xz]=\vphi\xy[\h\xy]=K_{\xy}=K_{\xy}\scir\h\yz\\
\intertext{and}
\vphi\yz[K_{\xy}\scir\h\yz]=\vphi\yx[K_{\xy}]=\vphi\yx[\h\yx]=K_{\yx}=K_{\xz}\scir
K_{\yz}\per
\end{gather*}
These equations imply that the isomorphism $\vphih\xy$ induced by
$\vphi\xy$ on the quotient group $\G x/(\h\xy\scir\h\xz) $ must
coincide with $\vphi\xy$, the isomorphism $\vphih\yz$ induced by
$\vphi\yz$ on the quotient group $\G y/(K_{\xy}\scir\h\yz) $ must
coincide with $\vphi\yz$, which is the same as $\vphi\yx$ and
therefore also the same as $\vphi\xy\mo$, by frame condition (ii)
(which has been shown to hold by conditions (i) and (ii) of the
theorem) and Converse Theorem~\ref{T:convthm1},  and the isomorphism
$\vphih\xz$ induced by $\vphi\xz$ on the quotient group $\G
x/(\h\xy\scir\h\xz) $ must coincide with the identity automorphism
of $\G x/\h\xy $\per Consequently,
\[\vphih\xy\rp\vphih\yz=\vphi\xy\rp\vphi\yx=\vphi\xy\rp\vphi\xy\mo=\vphih\xz\per
\]

 Assume now, that the indices $x$, $y$, and $z$ are  all distinct
from one another. If $x<y<z$, then
\begin{gather*}
\vphi \xy[\h \xy\scir\h \xz]=K_{\xy}\scir\h\yz\comma\qquad \vphi
\yz[K_{\xy}\scir\h\yz]=K_{\xz}\scir K_{\yz}\comma\tag{2}\label{Eq:simpfr1}\\
\vphih\xy\rp\vphih\yz=\vphih\xz\comma\tag{3}\label{Eq:simpfr2}
\end{gather*}
by conditions (iii) and (iv) of the theorem, where  $\vphih\xy$ is
the isomorphism from $\G\wx/(\hh)$ to $\G\wy/(\kh)$ that is induced
by $\vphi\xy$, while $\vphih\yz$ is the   isomorphism from
$\G\wy/(\kh)$ to $\G\wz/(\kk)$ that is induced by $\vphi\yz$. It
follows from \eqref{Eq:simpfr2}  that $\vphih\xz$ must be the
isomorphism from $\G\wx/(\hh)$  to $\G\wz/(\kk)$ that is induced by
$\vphi\xz$ (this is not part of the assumption in condition (iii)).
Consequently,
\begin{equation*}\tag{4}\label{Eq:simpfr3}
\vphi \xz[\h \xy\scir\h \xz]=K_{\xz}\scir K_{\yz}.
\end{equation*}
The corresponding equations for  the  remaining pairs of mappings
follow readily from \eqref{Eq:simpfr1}, \eqref{Eq:simpfr3}, and
condition (ii). In more detail,
\begin{equation*}\tag{5}\label{Eq:simpfr10}
\vphi\yx=\vphi\xy\mo\comma\qquad\vphi\zy=\vphi\yz\mo\comma\qquad\vphi\zx=\vphi\xz\mo\comma
\end{equation*}
by  (the  already verified)  frame  condition (ii). In particular,
\begin{gather*}\tag{6}\label{Eq:simpfr4}
\h \zx = K_{\xz}\comma\quad K_{\zx}=\h\xz\comma \quad\h \zy =
K_{\yz}\comma\quad K_{\zy}=\h\yz,\\ \h \yx = K_{\xy}\comma\quad
K_{\yx}=\h\xy.
\end{gather*}
Apply $\vphi\zx$ to  both sides  of \eqref{Eq:simpfr3}, and use
\eqref{Eq:simpfr10}, to obtain
\[\vphi\zx[K_{\xz}\scir K_{\yz}]=\h\xy\scir\h\xz.
\]
With the help of \eqref{Eq:simpfr4}, rewrite this last equation as
\[\vphi\zx[\h\zx\scir\h\zy]=K_{\zx}\scir\h\xy\per
\] Use \eqref{Eq:simpfr4} and \eqref{Eq:simpfr1} repeatedly, together with the fact that the subgroups
involved are normal,  to obtain
\[\vphi\xy[K_{\zx}\scir\h\xy]=\vphi\xy[\h\xy\scir\h \xz]=K_{\xy}\scir\h\yz=K_{\xy}\scir K_{\zy}=K_{\zy}\scir K_{\xy}\per\]
In other words, condition (iii) holds with the variables $x$, $y$,
and $z$ replaced by $z$, $x$, and $y$ respectively.  The other cases
of the third frame condition are verified in a similar fashion.

Frame condition (iv) is  a simple  consequence of the preceding
observations, together with \eqref{Eq:simpfr2}  and
\eqref{Eq:simpfr10}. For example, compose both sides of
\eqref{Eq:simpfr2} on the left with $\vphih\yx$, and use
\eqref{Eq:simpfr10},
 to obtain
\begin{equation*}\tag{7}\label{Eq:simpfr5}
\vphih\yx\rp\vphih\xz=\vphih\yz\comma
\end{equation*}
and compose both sides of \eqref{Eq:simpfr2} on the right with
$\vphih\zy$, and use  \eqref{Eq:simpfr10},
  to obtain
\begin{equation*}\tag{8}\label{Eq:simpfr6}
\vphih\xz\rp\vphih\zy=\vphih\xy.
\end{equation*}  This argument shows that the two permuted versions
of \eqref{Eq:simpfr2}, the first obtained by transposing the first
two indices $x$ and $y$ of the triple $\trip xyz$, and the second by
transposing the last two indices $y$ and $z$ of the triple, are
valid in $\mc F$. All permutations of the triple $\trip xyz$ may be
obtained by composing these two transpositions in various ways. For
example, if we transpose the first two indices of
\eqref{Eq:simpfr2}, permuting $\trip xyz$ to $\trip yxz$ and
arriving at \eqref{Eq:simpfr5}, and then transpose the last two
indices of \eqref{Eq:simpfr5}, permuting $\trip yxz$ to $\trip yzx$,
we arrive at
\[\vphih \yz\rp\vphih\zx=\vphih\yz\per\] It follows that frame
condition (iv) is valid in $\mc F$.
\end{proof}

An examination of the preceding proof reveals that only condition
(i) is used to verify the second frame condition in the case when
$x=y$, and to verify the last two frame condition when $x=y$ or
$y=z$.  Also, only
  conditions (i) and (ii) are used to verify the last two frame
conditions when $x=z$.  The following corollary, which will be
needed in the construction of coset relation algebras, is a
consequence of this observation. In formulating it and the
succeeding two corollaries, we use the following simplified
notation: if $f$ is the $\alpha$th element in some fixed enumeration
of one of the  groups $\G x$ in a group pair, then we write $\hs\xx
f$ and $R_{\xx, f}$ for $\hs\xx\alpha$ and $R_{\xx,\alpha}$
respectively.
\begin{corollary}\label{C:compthma}  Let $\mc F$ be a group pair  satisfying condition
\textnormal{(i)} of Theorem~\textnormal{\ref{T:simpfr}}\per  The
following conditions hold for all $x$ in $I$ and all pairs $\pair
xy$ in $\mc E$\per
\begin{enumerate}
\item[(i)] $R_{\xx, f}\mo=R_{\xx, g}$ for $f$  in $\G x$ and $g=f\mo$\per
\item[(ii)]
$R_{\xx, f}\rp R_{\xy,\bt}=R_{\xy,\gm}$ for $f$ in $\G x$ and
$\hs\xy\gm= f \scir\hs\xy\bt$\per
\item[(iii)]
$R_{\xy,\alpha}\rp R_{\yy, g}=R_{\xy,\gm}$ for $g$  in $\G y$ and
$\quad\ks\xy\gm= \ks\xy\alpha\scir g$\per
 \item[(iv)]  If
$\mc F$ also satisfies condition \textnormal{(ii)} of
Theorem~\textnormal{\ref{T:simpfr}}\comma  then
\begin{align*}
            R_{\xy,\alpha}\rp R_{\yx,\bt}&=\tbigcup\{R_{\xx, f} : f \in
\hs\xy\alpha\scir\hs\xy\bt\}\\&=\{\pair g{g\scir f}:g\in  \G x\text{
and } f\in \hs\xy\alpha\scir\hs\xy\bt\}\per
           \end{align*}
\end{enumerate}
In particular\co each of these converses and  compositions is in the
set of frame relations\po\end{corollary}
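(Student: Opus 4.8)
The plan is to derive all four assertions from the general theorems of Section~\ref{S:3}, exploiting the fact that condition~(i) of Theorem~\ref{T:simpfr} makes the quotient data attached to a diagonal pair $\pair xx$ degenerate. Under that condition $\h\xx=K_{\xx}=\{\e x\}$, the given enumeration of the cosets of $\h\xx$ in $\G x$ is simply an enumeration of the singletons $\{f\}$ with $f\in\G x$ (so, in the simplified notation of the corollary, $\hs\xx f=K_{\xx,f}=\{f\}$), and hence, directly from Definition~\ref{D:compro},
\[R_{\xx,f}=\tbigcup_{h\in\G x}\{h\}\times(\{h\}\scir\{f\})=\{\pair h{h\scir f}:h\in\G x\}\per\]
This is precisely the Cayley representative of $f$ from Section~\ref{S:2}. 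Assertion~(i) is now immediate: reindexing, $R_{\xx,f}\mo=\{\pair{h\scir f}h:h\in\G x\}=\{\pair a{a\scir f\mo}:a\in\G x\}=R_{\xx,g}$ with $g=f\mo$. (Alternatively, (i) follows from Converse Theorem~\ref{T:convthm1} applied to the pair $\pair xx$, whose condition~(iii) holds because $\vphi\xx$ is the identity automorphism and hence its own inverse.)

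For~(ii) and~(iii) I would apply Composition Theorem~\ref{T:compthm} to the degenerate triples $\trip xxy$ and $\trip xyy$, after checking that part~(iv) of that theorem holds in each case. For $\trip xxy$ the relevant normal subgroup $\vphi\xx\mo[K_{\xx}\scir\h\xy]$ equals $\h\xy$, so the inclusion demanded by part~(iv) reduces to $\h\xy\seq\h\xy$; moreover the induced isomorphisms occurring in part~(iv) are the identity automorphism of $\G x/\h\xy$ together with $\vphi\xy$, so the composition equation of part~(iv) holds trivially. Hence part~(iii) of the Composition Theorem gives
\[R_{\xx,f}\rp R_{\xy,\bt}=\tbigcup\{R_{\xy,\gm}:\hs\xy\gm\seq\vphi\xx\mo[\{f\}\scir\hs\xy\bt]\}=\tbigcup\{R_{\xy,\gm}:\hs\xy\gm\seq f\scir\hs\xy\bt\}\per\]
Since $\h\xy$ is normal, $f\scir\hs\xy\bt$ is a single coset of $\h\xy$, so exactly one index $\gm$ survives, namely the one with $\hs\xy\gm=f\scir\hs\xy\bt$, and the union collapses to the relation named in~(ii). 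Assertion~(iii) comes from $\trip xyy$ in the same way: now $\h\yy=\{\e y\}$, the inclusion in part~(iv) again reduces to $\h\xy\seq\h\xy$, the map induced by $\vphi\yy$ is the identity automorphism of $\G y/K_{\xy}$, and the Composition Theorem yields $R_{\xy,\lph}\rp R_{\yy,g}=\tbigcup\{R_{\xy,\gm}:\hs\xy\gm\seq\vphi\xy\mo[\ks\xy\lph\scir\{g\}]\}$; here $\ks\xy\lph\scir g$ is a single coset of $K_{\xy}$, so again a unique $\gm$ survives, pinned down by $\ks\xy\gm=\ks\xy\lph\scir g$.

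For~(iv) I would first note that conditions~(i) and~(ii) of Theorem~\ref{T:simpfr} together force $\vphi\yx=\vphi\xy\mo$ for \emph{every} pair $\pair xy$ in $\mc E$ (the case $x=y$ by~(i), the case $x<y$ by~(ii), and the case $y<x$ by~(ii) with the roles of $x$ and $y$ exchanged, exactly as in the verification of the second frame condition inside the proof of Theorem~\ref{T:simpfr}). Convention~\ref{Co:convention} then applies, so $\hs\yx\gm=\ks\xy\gm$ and $\ks\yx\gm=\hs\xy\gm$ for all $\gm$. Now apply the Composition Theorem to the triple $\trip xyx$: its part~(iv) inclusion is $\h\xx=\{\e x\}\seq\vphi\xy\mo[K_{\xy}\scir\h\yx]=\vphi\xy\mo[K_{\xy}]=\h\xy$, which holds automatically, and the induced isomorphisms are $\vphi\xy$, $\vphi\yx=\vphi\xy\mo$, and the identity automorphism of $\G x/\h\xy$, so the composition equation $\vphi\xy\rp\vphi\xy\mo=\id{\G x/\h\xy}$ holds. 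Part~(iii) of the Composition Theorem then gives
\[R_{\xy,\lph}\rp R_{\yx,\bt}=\tbigcup\{R_{\xx,\gm}:\hs\xx\gm\seq\vphi\xy\mo[\ks\xy\lph\scir\hs\yx\bt]\}\per\]
Substituting $\hs\yx\bt=\ks\xy\bt$ and using that the isomorphism $\vphi\xy\mo$ from $\G y/K_{\xy}$ onto $\G x/\h\xy$ carries the coset product $\ks\xy\lph\scir\ks\xy\bt$ to $\hs\xy\lph\scir\hs\xy\bt$, the right-hand side becomes $\tbigcup\{R_{\xx,\gm}:\hs\xx\gm\seq\hs\xy\lph\scir\hs\xy\bt\}=\tbigcup\{R_{\xx,f}:f\in\hs\xy\lph\scir\hs\xy\bt\}$, since $\hs\xx f=\{f\}$; inserting the formula for $R_{\xx,f}$ from the first paragraph rewrites this as $\{\pair g{g\scir f}:g\in\G x\text{ and }f\in\hs\xy\lph\scir\hs\xy\bt\}$. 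Finally, the closing sentence of the corollary is automatic, since the set of frame relations is closed under arbitrary unions and hence contains each of these relations.

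The only genuine work is the bookkeeping: in each degenerate instantiation of the Composition Theorem one must keep straight which of the many subscripted groups, normal subgroups, coset enumerations, and induced isomorphisms is intended, and check that the normal subgroups that collapse to $\{\e x\}$ or $\{\e y\}$ really do make part~(iv) of that theorem hold. I do not anticipate a substantive obstacle: the heavy lifting was already done in proving the Converse and Composition Theorems, and the present corollary is a careful specialization of them to the case where one of the two factors lives over a diagonal pair.
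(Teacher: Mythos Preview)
Your proposal is correct and follows essentially the same approach as the paper. The paper only writes out part~(iv) explicitly (``As an example, we prove (iv)''), and your argument for~(iv) matches it step for step: specialize to $z=x$, use condition~(i) to trivialize the $\xx$-data, invoke condition~(ii) and Convention~\ref{Co:convention} to identify $\hs\yx\bt$ with $\ks\xy\bt$, verify part~(iv) of the Composition Theorem for the triple $\trip xyx$, and read off the answer from part~(iii). Your treatments of (i)--(iii) fill in what the paper leaves to the reader, using exactly the same strategy of specializing the Converse and Composition Theorems to degenerate triples.
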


\begin{proof} As an example, we prove (iv).  Write $z=x$ and use condition (i) from Theorem~\ref{T:simpfr} to see that
\begin{equation*}\tag{1}\label{Eq:compthma1}
  \h\xz=\h\xx=\{e_x\}
\end{equation*}
and that
\begin{equation*}\tag{2}\label{Eq:compthma2}
  \vphi\xz=\vphi\xx
\end{equation*} is the identity automorphism of $\G x/\{e_x\}$\per
Consequently,
\begin{equation*}\tag{3}\label{Eq:compthma3}
  \hs\xz f=\hs\xx f=\{f\}\qquad\text{and}\qquad R_{\xz, f}=R_{\xx, f}=\{\pair g{g\scir f}:g\in \G x\}
\end{equation*}
for every $f$ in $\G x$.  The additional assumption of condition
(ii) from Theorem~\ref{T:simpfr} implies that  frame condition (ii)
holds, by Theorem~\ref{T:simpfr}, and therefore
\begin{equation*}\tag{4}\label{Eq:compthma4}
  \vphi\yx=\vphi\xy\mo\per
\end{equation*} Invoke Convention~\ref{Co:convention} to write $\kai\yx=\kai\xy$ and
\begin{equation*}\tag{5}\label{Eq:compthma5}
  \hs\yx\g=\ks\xy\g\qquad\text{and}\qquad\ks\yx\g=\hs\xy\g
\end{equation*} for all indices $\g<\kai\xy$.  In particular, taking $\g=0$, we obtain
\begin{equation*}\tag{6}\label{Eq:compthma6}
\h\yx=K_{\xy}\qquad\text{and}\qquad K_{\yx}=\h\xy\per
\end{equation*}

Since
\begin{equation*}\tag{7}\label{Eq:compthma7}
 \h\xy\scir\h\xz=\h\xy\scir\{e_x\}=\h\xy\comma
\end{equation*}
by \eqref{Eq:compthma1}, the isomorphism $\vphih\xz$ induced by
$\vphi\xz$, which coincides with $\vphih\xx$, by
\eqref{Eq:compthma2}, is the identity automorphism of $\G
x/\h\xy$\comma by \eqref{Eq:compthma2}\per  Similarly,
\eqref{Eq:compthma7} implies that the isomorphism $\vphih\xy$
induced by $\vphi\xy$ coincides with $\vphi\xy$.  Finally,
\eqref{Eq:compthma4} and the preceding observation imply that the
isomorphism $\vphih\yz$ induced by $\vphi\yz$ coincides with
$\vphi\xy\mo$.  Combine these three observations to conclude that
\[\vphih\xy\rp\vphih\yz=\vphi\xy\rp\vphi\xy\mo\comma\] which then is the identity automorphism on $\G x/\h\xy$, and therefore
\begin{equation*}\tag{8}\label{Eq:compthma8}
 \vphih\xy\rp\vphih\yz=\vphih\xz\per
\end{equation*}

The assumption on $z$, and  \eqref{Eq:compthma5}, yield
\[\ks\xy\alpha\scir\hs\yz\bt=\ks\xy\alpha\scir\hs\yx\bt=\ks\xy\alpha\scir\ks\xy \bt\comma\] and therefore,
using also the isomorphism properties  of $\vphi\xy\mo$,
\begin{multline*}\tag{9}\label{Eq:compthma9}
 \vphi\xy\mo[\ks\xy\alpha\scir\hs\yz\bt]=\vphi\xy\mo(\ks\xy\alpha\scir\ks\xy\bt)\\
 =\vphi\xy\mo(\ks\xy\alpha)\scir\vphi\xy\mo(\ks\xy\bt)=\hs\xy\alpha\scir\hs\xy\bt\per
\end{multline*} Take $\alpha=\bt=0$ in \eqref{Eq:compthma9} to arrive at\begin{equation*}\tag{10}\label{Eq:compthma10}
\vphi\xy\mo[K_{\xy}\scir\h\yz]= \vphi\xy\mo[\ks\xy 0\scir\hs\yz
0]=\hs\xy 0\scir\hs\xy 0=\h\xy\scir\h\xy=\h\xy\per
\end{equation*}  Because $\h\xz$ coincides with the trivial subgroup $\{e_x\}$, by \eqref{Eq:compthma1}, it may be
concluded from \eqref{Eq:compthma10} that
\begin{equation*}\tag{11}\label{Eq:compthma11}\h\xz\seq  \vphi\xy\mo[K_{\xy}\scir\h\yz]\per
\end{equation*}   Together,   \eqref{Eq:compthma8} and \eqref{Eq:compthma11} show that condition (iv) in Composition
Theorem~\ref{T:compthm} is satisfied in the case under
consideration. Apply the implication from (iv) to (iii) in that
theorem, together with \eqref{Eq:compthma9}, \eqref{Eq:compthma3},
and the definition of $R_{\xx, f}$,  to conclude that
\begin{align*}R_{\xy,\alpha}\rp R_{\yz,\bt}&=R_{\xy,\alpha}\rp R_{\yx,\bt}\\
&=\tbigcup\{R_{\xx, f} : \hs\xx f\seq\vphi\xy\mo[\ks\xy\alpha\scir\hs\yz\bt]\}\\
&=\tbigcup\{R_{\xx, f} : f\in \hs\xy\alpha\scir\hs\xy\bt\}\\
&=\tbigcup\{\{\pair g{g\scir f}:g\in\G x\}:f\in \hs\xy\alpha\scir\hs\xy\bt\}\\
&=\tbigcup\{\pair g{g\scir f}:g\in\G x\text{ and }f\in
\hs\xy\alpha\scir\hs\xy\bt\}\per
 \end{align*}
\end{proof}

We return  to the question that was posed after the Converse
Theorem: can condition (i) in that theorem be replaced by the
condition that $R_{\xy, \alpha}\mo$ be in $A$ for some fixed
$\alpha$?

\begin{corollary}\label{C:convequiv}  If the set $A$ of frame relations contains the
identity relation  on the base set\comma  then for any pair $\pair x
y$ in $\mc E$\co the following conditions are equivalent\per
\begin{enumerate}
\item[(i)] $R_{\xy,\alpha}\mo$ is in $A$ for some $\alpha<\kai\xy$\per
\item[(ii)]$R_{\xy,\alpha}\mo$ is in $A$ for all $\alpha<\kai\xy$\per
\end{enumerate}
\end{corollary}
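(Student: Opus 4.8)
The plan is to prove the substantive implication, from (i) to (ii); the reverse implication is immediate, since $\kai\xy\geq 1$ (the coset $\hs\xy 0=\h\xy$ always occurs), so a property holding for all $\alpha<\kai\xy$ in particular holds for some such $\alpha$.

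I would begin by unpacking the standing hypothesis. Since $\id U\in A$, Identity Theorem~\ref{T:identthm1} tells us that $\vphi\xx$ is the identity automorphism of $\G x/\{\e x\}$ for every $x\in I$; equivalently, the group pair underlying $A$ satisfies condition (i) of Theorem~\ref{T:simpfr}, so that Corollary~\ref{C:compthma}(i)--(iii) are available. In particular each $R_{\xx, f}$ with $f\in\G x$ is a Cayley relation, and $R_{\xx, f}\mo=R_{\xx, f\mo}$.

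Now fix an $\alpha<\kai\xy$ with $R_{\xy,\alpha}\mo\in A$, let $\beta<\kai\xy$ be arbitrary, and aim to show $R_{\xy,\beta}\mo\in A$. Since $\G x$ permutes the cosets of $\h\xy$ transitively, one can choose $f\in\G x$ with $f\scir\hs\xy\alpha=\hs\xy\beta$; then Corollary~\ref{C:compthma}(ii) gives $R_{\xx, f}\rp R_{\xy,\alpha}=R_{\xy,\beta}$. Passing to converses and using the second involution law together with Corollary~\ref{C:compthma}(i), this becomes $R_{\xy,\beta}\mo=R_{\xy,\alpha}\mo\rp R_{\xx, f\mo}$. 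Next I would observe that $R_{\xy,\alpha}\mo$ is a non-empty subrelation of the rectangle $\G y\times\G x$ (Partition Lemma~\ref{L:i-vi}) which, by hypothesis, lies in $A$; since $\pair yx\in\mc E$ and the atoms of $A$ contained in $\G y\times\G x$ are precisely the relations $R_{\yx,\dlt}$, Boolean Algebra Theorem~\ref{T:disj} provides a non-empty $\Ga\seq\kai\yx$ with $R_{\xy,\alpha}\mo=\tbigcup_{\dlt\in\Ga}R_{\yx,\dlt}$. Distributing relational composition over this union gives $R_{\xy,\beta}\mo=\tbigcup_{\dlt\in\Ga}\bigl(R_{\yx,\dlt}\rp R_{\xx, f\mo}\bigr)$, and Corollary~\ref{C:compthma}(iii), applied with the roles of $x$ and $y$ interchanged and with $g=f\mo\in\G x$, identifies each summand $R_{\yx,\dlt}\rp R_{\xx, f\mo}$ with a single atom of $A$. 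Hence $R_{\xy,\beta}\mo$ is a union of atoms of $A$, so it belongs to $A$; as $\beta$ was arbitrary, (ii) follows.

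The only point requiring genuine care is that $A$ is not assumed closed under relational composition or converse, so the step $R_{\xy,\beta}\mo=R_{\xy,\alpha}\mo\rp R_{\xx, f\mo}\in A$ cannot be read off from closure: it must go through the explicit composition formulas of Corollary~\ref{C:compthma}, which pin each product that arises down to an individual atom. Once that is in hand, only routine bookkeeping with coset indices remains.
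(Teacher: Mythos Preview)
Your proof is correct and follows essentially the same route as the paper: reduce to the case of a single coset shift via Corollary~\ref{C:compthma}(ii), take converses using the second involution law and Corollary~\ref{C:compthma}(i), and then observe that right-composition with $R_{\xx,f\mo}$ sends $A$ to $A$. The only difference is that where the paper summarizes this last step by saying ``$A$ is closed under composition of its elements with relations of the form $R_{\xx,g}$,'' you spell it out explicitly by decomposing $R_{\xy,\alpha}\mo$ into atoms $R_{\yx,\dlt}$ and applying Corollary~\ref{C:compthma}(iii) termwise---a welcome clarification, but not a different argument.
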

\begin{proof} The implication from (ii) to (i) is obvious. To establish the reverse
implication, assume that $R_{\xy,\x}\mo$ is in $A$ for some
$\x<\kai\xy$, and let $\alpha<\kai\xy$  be an arbitrary index.
Choose an element $f$ in $\G\wx$ such that
\begin{equation*}\tag{1}\label{Eq:convequiv1}
f\scir\hs\xy\x=\hs\xy\alpha\per
\end{equation*}
  The assumption on $A$ implies that the
group pair $\mc F$ satisfies condition (i) of
Theorem~\ref{T:simpfr}, by the Identity Theorem~\ref{T:identthm1}.
Apply Corollary~\ref{C:compthma}(ii) and \eqref{Eq:convequiv1} to
obtain
\begin{equation*}\tag{2}\label{Eq:convequiv4}
R_{\xx, f}\rp R_{\xy,\x}=R_{\xy,\alpha}\per
\end{equation*}   Form the converse
of both sides of \eqref{Eq:convequiv4}, and use the second
involution law for relational composition, to arrive at
\begin{equation*}\tag{3}\label{Eq:convequiv5}
R_{\xy,\x}\mo\rp R_{\xx, f}\mo=R_{\xy,\alpha}\mo.
\end{equation*}

Put
\begin{equation*}\tag{4}\label{Eq:convequiv2}
f\mo=g\per
\end{equation*}
Apply Corollary~\ref{C:compthma}(i) to \eqref{Eq:convequiv2} to
obtain
\begin{equation*}
R_{\xx, f}\mo = R_{\xx, g}.
\end{equation*}
Use this equation to rewrite equation  \eqref{Eq:convequiv5} in the
form
\begin{equation*}\tag{5}\label{Eq:convequiv6}
R_{\xy,\x}\mo\rp R_{\xx, g}=R_{\xy,\alpha}\mo.
\end{equation*}
The relation $R_{\xy,\x}\mo$ is in $A$, by assumption.
Corollary~\ref{C:compthma}(ii),(iii) and the dis\-trib\-u\-tiv\-ity
of relational composition  together imply that the set $A$ is closed
under the composition of its elements with relations of the form
$R_{\xx, g}$. Consequently, the composition on the left side of
\eqref{Eq:convequiv6} is in $A$. Use  \eqref{Eq:convequiv6} to
conclude that $R_{\xy,\alpha}\mo$ is in $A$.
\end{proof}

Turn next to the question that was posed after the Composition
Theorem: can condition (i) in that theorem be replaced by the
condition that $R_{\xy, \alpha} \rp R_{\yz, \bt}$ be in $A$ for some
fixed $\alpha$ and $\bt$?
\begin{corollary}\label{C:compequiv}  If the set $A$ of frame relations contains the
identity relation\comma then for any pairs $\pair x y$ and $\pair y
z$ in $\mc E$\co the following conditions are equivalent\per
\begin{enumerate}
\item[(i)] $R_{\xy,\alpha}\rp R_{\yz,\bt}$ is in $A$ for some $\alpha<\kai\xy$ and
some $\bt<\kai\yz$\per
\item[(ii)]$R_{\xy,\alpha}\rp R_{\yz,\bt}$ is in $A$ for all $\alpha<\kai\xy$ and
all $\bt<\kai\yz$\per
\end{enumerate}
\end{corollary}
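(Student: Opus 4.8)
The plan is to follow the template of Corollary~\ref{C:convequiv}, replacing its converse manipulations by left and right multiplications with the ``diagonal'' relations $R_{\xx,f}$ and $R_{\zz,g}$. The implication from (ii) to (i) is trivial, so all the work goes into the converse. First I would record that, since $A$ contains the identity relation, the group pair $\mc F$ satisfies condition (i) of Theorem~\ref{T:simpfr} by Identity Theorem~\ref{T:identthm1}, so that Corollary~\ref{C:compthma} is at our disposal.

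The step carrying most of the content is the observation that $A$ is closed under left multiplication by any $R_{\xx,f}$ with $f\in\G x$, and under right multiplication by any $R_{\zz,g}$ with $g\in\G z$. To see this, let $B$ be an arbitrary atom of $A$; by Boolean Algebra Theorem~\ref{T:disj}, $B=R_{uv,\gamma}$ for some $\pair uv\in\mc E$ and some index $\gamma$. If $u=x$, then $R_{\xx,f}\rp B$ is again an atom of $A$, by Corollary~\ref{C:compthma}(ii) applied with $\pair xv$ in place of $\pair xy$; if $u\ne x$, then $R_{\xx,f}\rp B=\varnot$ by Lemma~\ref{L:emptycomp}. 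Either way $R_{\xx,f}\rp B\in A$. Since every member of $A$ is a union of atoms and relational composition distributes over unions, it follows that $R_{\xx,f}\rp S\in A$ for every $S\in A$. The argument for right multiplication by $R_{\zz,g}$ is entirely symmetric, using Corollary~\ref{C:compthma}(iii) and Lemma~\ref{L:emptycomp} in the same way.

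Finally, assume $R_{\xy,\alpha_0}\rp R_{\yz,\bt_0}\in A$ for fixed $\alpha_0<\kai\xy$ and $\bt_0<\kai\yz$, and let $\alpha<\kai\xy$ and $\bt<\kai\yz$ be arbitrary. Since $\h\xy$ is normal in $\G x$, its cosets form a group, so there is an element $f\in\G x$ with $f\scir\hs\xy{\alpha_0}=\hs\xy\alpha$; likewise, since $K_{\yz}$ is normal in $\G z$, there is $g\in\G z$ with $\ks\yz{\bt_0}\scir g=\ks\yz\bt$. By Corollary~\ref{C:compthma}(ii),(iii), these choices give
\[
R_{\xx,f}\rp R_{\xy,\alpha_0}=R_{\xy,\alpha}\qquad\text{and}\qquad R_{\yz,\bt_0}\rp R_{\zz,g}=R_{\yz,\bt},
\]
whence, using associativity of relational composition,
\[
R_{\xy,\alpha}\rp R_{\yz,\bt}=R_{\xx,f}\rp\bigl(R_{\xy,\alpha_0}\rp R_{\yz,\bt_0}\bigr)\rp R_{\zz,g}.
\]
The parenthesized factor lies in $A$ by hypothesis, so by the closure properties established in the previous step the right-hand side lies in $A$ as well, and (ii) follows. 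The only genuinely delicate point in the whole argument is the closure claim of the middle step, which is why I would isolate it and treat the cases $u=x$ and $u\ne x$ separately; the rest is routine coset bookkeeping and the selection of $f$ and $g$.
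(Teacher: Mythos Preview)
Your argument is correct and follows essentially the same route as the paper's own proof: choose $f\in\G x$ and $g\in\G z$ so that $R_{\xx,f}\rp R_{\xy,\alpha_0}=R_{\xy,\alpha}$ and $R_{\yz,\bt_0}\rp R_{\zz,g}=R_{\yz,\bt}$ via Corollary~\ref{C:compthma}(ii),(iii), then use associativity and the closure of $A$ under left and right multiplication by these diagonal relations. Your treatment of the closure step is in fact slightly more explicit than the paper's, which simply invokes Corollary~\ref{C:compthma} and distributivity; your case split on $u=x$ versus $u\ne x$ (using Lemma~\ref{L:emptycomp} for the latter) spells out why composing an arbitrary atom of $A$ with $R_{\xx,f}$ stays in $A$.
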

\begin{proof}  The
 implication from (ii)  to (i) is  obvious.  To establish the
reverse implication,  use an argument  similar to the one in the
preceding proof. Assume that $R_{\xy,\x}\rp R_{\yz,\eta}$ is in $A$
for some $\x<\kai\xy$ and $\eta<\kai\yx$. Let $\alpha <\kai\xy$ and
$\bt<\kai\yz$ be arbitrary, and choose elements $f$ in $\G x$ and
$g$ in $\G z$
 so that
\begin{align*}
f\scir\hs\xy\x=\hs\xy\alpha\qquad&\text{and}\qquad\ks\yz\eta\scir
 g=\ks\yz\bt\per\tag{1}\label{Eq:compequiv1}\\ \intertext{The assumption
on $A$ implies that the group pair $\mc F$ satisfies condition (i)
of Theorem~\ref{T:simpfr}, by the Identity
Theorem~\ref{T:identthm1}. Apply Corollary~\ref{C:compthma}(ii),(ii)
and \eqref{Eq:compequiv1} to obtain}
 R_{\xx, f}\rp R_{\xy,\x}=R_{\xy,\alpha}\qquad&\text{and}\qquad
R_{\yz,\eta}\rp R_{\zz, g}=R_{\yz,\bt}\per\end{align*} These
equations and the associative law for relational composition lead
immediately to
\begin{gather*}\tag{2}\label{Eq:compequiv2}R_{\xy,\alpha}\rp
R_{\yz,\bt} = (R_{\xx, f}\rp R_{\xy,\x})\rp(R_{\yz,\eta}\rp R_{\zz,
g}) =\\
R_{\xx, f}\rp(R_{\xy,\x}\rp R_{\yz,\eta})\rp R_{\zz, g}\per
\end{gather*}
The relation $R_{\xy,\x}\rp R_{\yz,\eta}$ is in $A$, by assumption.
Corollary~\ref{C:compthma} and the dis\-trib\-u\-tiv\-ity of
relational composition over unions together imply that the set $A$
is closed under the composition of its elements with relations of
the form $R_{\xx, f}$ and $R_{\zz, g}$. Consequently, the
composition on the right side of \eqref{Eq:compequiv2} is in $A$.
Use \eqref{Eq:compequiv2} to conclude that $R_{\xy,\alpha}\rp
R_{\yz,\bt} $ is in $A$.
 \end{proof}

\section{Examples}

 The easiest group frame to construct involves a kind of ``power" of
a quotient group. Fix a group $M$ and  a normal subgroup $N$. For
each element $x$ in a given index set $I$, let $\grp x$ be an
isomorphic copy of $M$ (chosen so that distinct copies are pairwise
disjoint) and $\psi_x$ an isomorphism from the quotient group $M/N$
to the corresponding quotient group of $\grp x$\per Take the mapping
$\ho x y$ to be the natural isomorphism between the quotient groups
of $\grp x$ and of $\grp y$\comma defined by
\[\ho x y =\psi_x\mo\rp\psi_y
\]
 for distinct $x$ and $y$ in $I$. These mappings are all
 isomorphisms between  copies of the single quotient group $M/N$.
 Take $\ho x x$ to be  the  identity automorphism of $\grp x/\{\cs
 ex\}$, as required by the definition of a  frame.

The resulting  pair $\mc F=\pair G \varphi$ is readily seen to be a
group frame, and the corresponding  group relation algebra $\cra G
F$ is  a measurable set relation algebra. If we take the indices
$\alpha$ of the atomic relations in $\cra  G F$ to be the
corresponding cosets of $M/N$, then there are especially simple
formulas for computing the converse of an atomic relation and the
composition of two atomic relations in $\cra G F$:
\[R_{xy,\alpha}\mo =
\newrr y x {\alpha\mo}\qquad\text{and}\qquad \newrr x y
\alpha\relprod\newrr y z \beta = \newrr x z {\alpha\scir\beta}
\] when $x$, $y$, and $z$ are distinct elements of $I$.  (Here
$\alpha\mo$ denotes the inverse of the coset $\alpha$, and
$\alpha\scir\beta$ the product of the  cosets $\alpha$ and $\beta$,
in $M/N$.)

If $N$ is the trivial (one-element) subgroup of $M$, then each
atomic relation $\newrr x y \alpha$ is a function and in fact a
bijection from $\grp x$ to $\grp y$\per  In this case, $\cra G F$ is
an example of an atomic relation algebra with functional atoms. At
the other extreme, if $N$ coincides with $M$, then there is only one
atomic relation, namely
\[\newrr x y 0=\grp x\times \grp y\comma\]
for each pair of distinct indices $x,y$ in $I$. In general, if  the
normal subgroup $N$ has order $\lambda$ and index $\kappa$ in $M$
(that is to say, if $N$ contains $\lambda$ elements and has $\kappa$
cosets in $M$), then there will be $\kappa$ distinct atomic
relations of the form $\newrr x y \alpha$\comma and each of them
will be the union of $\lambda$ pairwise disjoint bijections from
$\grp x$ to $\grp y$.

In the general case of the power construction, $\mce$ is allowed to
be an arbitrary equivalence relation on $I$.  Moreover, the group
$M$ and normal subgroup $N$ are fixed for a given equivalence class
of $\mce$, but different equivalence classes may use different
groups and normal subgroups.

The most trivial case of the power construction is when the fixed
group $M$ is the   one-element group.  In this case, $\cra  G F$ is
just the full set relation algebra with base set and unit
\[U=\tbigcup \{\grp x:x\in I\}\qquad\text{and}\qquad
E=\tbigcup\{\cs Gx\times\cs Gy:\pair xy\in\mc E \}\] respectively.
Moreover, every full set relation algebra on an equivalence relation
can be obtained as a group relation algebra in this fashion, using
arbitrary equivalence relations $\mc E$ on $I$. The construction of
full set relation algebras may therefore be viewed as the most
trivial case of the construction of full group relation algebras,
namely the case when all the groups have order one. This class may
be characterized abstractly, up to isomorphisms, as the class of
complete and atomic singleton-dense relation algebras.

It follows from this observation that the class of algebras
embeddable into full group relation algebras coincides with the
class of representable relation algebras.  In particular, the class
is equationally axiomatizable, by the results of Tarski\,\cite{t55}.
However, the description of representable relation algebras in terms
of group relation algebras seems much more advantageous, because the
class of full group relation algebras is substantially more varied
and interesting than the class of full set relation algebras.

A second example  of the  group relation algebra construction that
is easy to describe is the one in which all of the groups are
cyclic. Suppose $G=\langle\grp x:x\in I\,\rangle$ is a family of
(pairwise disjoint) cyclic groups and $\mce$ an equivalence relation
on $I$. To avoid unnecessary complications in notation, we  consider
here only the case when the  groups are finite.  Fix a generator
$g_x$ of each group $\grp x$.  Let $\langle \kap x y:\pair x y \in
\mce\,\rangle$ be a system  of positive integers satisfying the
following  conditions for all appropriate pairs in $\mce$\per
\begin{enumerate}
\item[(i)] $ \kap x y$ is a common divisor of the orders of $\grp x$
and $\grp y$\per
\item[(ii)] $ \kap x x$ is equal to the order of $\grp x$\per
\item[(iii)] $ \kap y x = \kap x y$\per
\item[(iv)] $\gcd( \kap x y,\kap y z) =  \gcd( \kap x y,\kap x
z)=\gcd( \kap x z,\kap y z)$\per
\end{enumerate}

  Condition (i) ensures that there are (uniquely determined)
subgroups $\newhl x y$ and $\newhr x y$ of index $\kap x y$ in $\grp
x$ and $\grp y$ respectively.    The  quotient groups $\newgl x y$
and $\newgr x y$ are therefore isomorphic, and in fact there is a
uniquely determined isomorphism $\ho x y$ between them that maps the
generator $g_x/\newhl x y$ of the first quotient to the generator
$g_y/\newhr x y$ of the second. Conditions (ii) and (iii), and the
definition of the quotient isomorphisms, ensure that  frame
conditions (i) and (ii) are satisfied. The complex product $\newhl x
y\scir\newhl x z$ is a subgroup of $\grp x$ of index $d = \gcd (\kap
x y, \kap x z)$. Condition (iv) says that the complex products
$\newhr x y\scir\newhl y z$ and $\newhr x z\scir\newhr y z$ also
have index $d$. This, together with the definition of the quotient
isomorphisms, ensures that frame conditions (iii) and (iv)  are
satisfied. It follows that the  pair $\mc F =(G,\varphi)$ is a group
frame. This construction using cyclic groups is due jointly to
Hajnal Andr\'eka and the author.

If every group in  $\mc F$ has order one or two, then the group
relation algebra $\cra G F$ is an example of a pair-dense relation
algebra in the sense of Maddux \cite{ma91}.  When $\kap x y = 2$,
there are exactly two relations: $\newrr x y 0$ and  $\newrr x y
1$\per Each of them is a function, and  in fact a bijection from
$\grp x$ to $\grp y$, with exactly two pairs.  When $\kap x y = 1$,
there is only the one relation  $\newrr x y 0=\grp x\times \grp
y$\per It contains either four pairs, two pairs, or one pair,
according to whether both groups $\grp x$ and $\grp y$ have order
two, exactly one of these  groups has order two and the other order
one, or both groups have order one.  The class of such group
relation algebras may be characterized abstractly, up to
isomorphisms, as the class of complete and atomic pair-dense
relation algebras.

\section{A decomposition theorem}

The  isomorphism index set $\mc E$ of a group frame $\mc F=\pair
G\vp$ is  an equivalence relation on the group index set $I$, and
the unit \[E=\tbigcup\{\G\wx\times\G\wy:\pair xy\in \mc E \}\] of
the corresponding full group relation algebra $\cra G F$ is an
equivalence relation on the base set $U=\tbigcup_{x\in I}\cs Gx$.
Call a group frame \textit{simple} if the group index set $I$  is
not empty, and the isomorphism  index set $\mc E$
 is the universal relation on $I$. It turns out that the frame $\mc F$ is simple
 if and only if the algebra $\cra GF$ is simple in the algebraic sense
of the word, namely, it has more than one element, and every
non-constant homomorphism on it must be injective; or, equivalently,
it has exactly two ideals, the trivial ideal and the improper ideal.
\begin{theorem}\label{T:simplegra}
  Let $\mc F$ be a group frame\per The group relation algebra $\cra GF$ is simple if and only if   $\mc
  F$is simple\per
\end{theorem}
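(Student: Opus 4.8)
The plan is to apply the classical characterization of simplicity for relation algebras: in $\cra G F$, whose unit is $E$ and whose relative multiplication is relational composition, simplicity is equivalent (see, e.g., \cite{jt52}) to the conjunction of two conditions---that $\craset G F$ has more than one element, and that $E\rp a\rp E=E$ for every nonzero $a\in\craset G F$---and, since $\cra G F$ is atomic by Boolean Algebra Theorem~\ref{T:disj}, the second condition need only be checked for atoms $a$. Two trivial reductions streamline the rest. First, $E\rp E=E$ because $E$ is an equivalence relation, so $E\rp a\rp E\seq E$ holds automatically for every $a\in\craset G F$, and only the reverse inclusion carries content. Second, $\craset G F$ has more than one element exactly when $E\ne\varnot$, which (as every $\G x$ is nonempty) happens exactly when $I\ne\varnot$; and $E=U\times U$ precisely when $\mc E$ is the universal relation on $I$. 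After these reductions the theorem is purely a matter of comparing the rectangle $E\rp a\rp E$ with $E$.

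For the direction ``$\mc F$ simple implies $\cra G F$ simple'': assume $I\ne\varnot$ and $\mc E=I\times I$, so $E=U\times U\ne\varnot$. Given an atom $a$, write $a=R_{\xy,\alpha}$ with $\pair x y\in\mc E$ and $\alpha<\kai\xy$. By Partition Lemma~\ref{L:i-vi} the relation $R_{\xy,\alpha}$ is nonempty and contained in $\G x\times\G y$, so it contains a pair $\pair p q$; then
\[E\rp a\rp E\ \supseteq\ E\rp\{\pair p q\}\rp E\ =\ U\times U\ =\ E,\]
using $E=U\times U$. Hence $E\rp a\rp E=E$ for every atom, and $\cra G F$ is simple. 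This direction is routine.

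For the converse, assume $\cra G F$ is simple. Then $\craset G F$ has more than one element, so $E\ne\varnot$ and $I\ne\varnot$. Fix $x\in I$. By Identity Theorem~\ref{T:identthm1} and frame condition~(i), $R_{\xx,0}=\id{\G x}$, a nonzero atom, so simplicity yields $E\rp\id{\G x}\rp E=E$. Unwinding the definitions of $E$ and of relational composition gives the single computation
\[E\rp\id{\G x}\rp E=\bigl(\tbigcup\{\G u:\pair u x\in\mc E\}\bigr)\times\bigl(\tbigcup\{\G v:\pair x v\in\mc E\}\bigr).\]
Now let $w\in I$ be arbitrary. Since $\pair w w\in\mc E$, the nonempty rectangle $\G w\times\G w$ lies in $E=E\rp\id{\G x}\rp E$; as the groups $\G u$ are pairwise disjoint and $\G w\ne\varnot$, this forces $\pair w x\in\mc E$. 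Since $w$ was arbitrary and $\mc E$ is an equivalence relation, $\mc E=I\times I$; hence $\mc F$ is simple.

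The main point requiring care is the choice of test element in the converse direction---the subidentity atom $R_{\xx,0}$, which the Identity Theorem identifies with $\id{\G x}$---together with the correct evaluation of the ``square'' $E\rp R_{\xx,0}\rp E$ and the ensuing disjointness argument; the closure of $\craset G F$ under relational composition and the simplicity criterion itself are available off the shelf, so no serious obstacle remains.
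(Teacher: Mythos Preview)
Your proof is correct. The forward direction is essentially the paper's argument unpacked: the paper observes that when $\mc E=I\times I$ and $I\neq\varnot$, the unit $E$ is the universal relation on a nonempty set, so $\cra G F$ sits inside the simple algebra $\full E$ and inherits simplicity; your verification that $E\rp a\rp E=E$ for every nonzero $a$ is exactly what underlies that classical fact.

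For the converse, however, you take a genuinely different route. The paper postpones this direction until after the Decomposition Theorem~\ref{T:cosdecomp}, and then argues by contrapositive: if $\mc F$ is not simple, either $I=\varnot$ (so $\cra G F$ is degenerate) or $\mc E$ has at least two classes, whence $\cra G F$ decomposes as a direct product of at least two simple factors and cannot itself be simple. Your argument instead stays entirely within the simplicity criterion $E\rp a\rp E=E$, testing it against the subidentity atom $R_{\xx,0}=\id{\G x}$ and reading off from the explicit computation of $E\rp\id{\G x}\rp E$ that every index $w$ must be $\mc E$-related to $x$. This is more elementary and self-contained---it does not depend on the Decomposition Theorem at all---while the paper's approach, though less direct, yields the structural decomposition as a by-product and makes the failure of simplicity visibly attributable to a nontrivial product splitting.
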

\begin{proof}
  Suppose first that the frame $\mc F$ is simple.  The isomorphism index set $\mc E$ is then
  the universal relation on the index set $I$, and consequently the unit $E$
  of $\cra GF$ is the universal relation $U\times U$ on the base
  set $U$.  Moreover, the base set $U$ is not empty, because the index set $I$ is not empty,
  and the groups indexed by $I$ are not empty.  The algebra $\full E$ therefore consists
  of all binary relations on
  a non-empty base set. Such set relation algebras are  well known to be simple.
  Moreover,   $\cra GF$ is a
  subalgebra of  $\full E$, by Group Frame Theorem~\ref{T:closed}.  It is well known that subalgebras of simple
  relation algebras are simple, so $\cra GF $
  must also be
  simple.

  We postpone the proof of the reverse implication, that
  simplicity of $\cra GF$ implies that of $\mc F$,
 until after the next theorem.
\end{proof}

 It turns out that
every full group relation algebra can  be decomposed into the direct
product of   simple, full group relation  algebras. Here is a sketch
of the main ideas. The details are left  to the reader. Given an
arbitrary group frame
\[\mc F=(\langle \G x:x\in
I\,\rangle\smbcomma\langle\vph_\xy:\pair xy\in \mc E\rangle)\comma\]
consider an equivalence class $J$ of the isomorphism index set $\mc
E$. The universal relation $J\times J$ on $J$ is a subrelation of
$\mc E$, and in fact it is a maximal connected component of $\mc E$
in the graph-theoretic sense of the word. The
  \textit{restriction} of $\mc F$ to $J$  is defined to be the
group pair
\[\mc F_J=(\langle \G x:x\in
J\,\rangle\smbcomma\langle\vph_\xy:\pair xy\in J\times J\rangle).\]
Each such restriction of $\mc F$ to an equivalence class of the
index set $\mc E$ inherits the frame properties of $\mc F$ and is
therefore a simple group frame. Call such restrictions the
\textit{components} of $\mc F$.  It is not difficult to check that
every frame is the disjoint union of its components in the
  sense that the group system and the isomorphism system of $\mc
F$ are obtained by respectively combining the group systems and the
isomorphism systems  of the components of $\mc F$.

Each  component $\cs{\mc F}J$ gives rise to a full group relation
algebra $\cras G {\cs {\mc F}J }$ that is  simple and is in fact a
subalgebra of the full set relation algebra with   base set and unit
\[\cs UJ=\tbigcup_{x\in J}\cs Gx\qquad\text{and}\qquad\cs EJ=\cs
UJ\times\cs UJ\] respectively. The group relation algebra $\cra GF$
is isomorphic to the direct product of the simple group relation
algebras $\cras G{\cs {\mc F}J}$ constructed from the components of
$\mc F$ (so $J$ varies over the equivalence classes of $\mc E$). In
fact, if internal direct products are used instead of Cartesian
direct products, then $\cra G F$ is actually equal to the internal
direct product of the full group relation algebras constructed from
its component frames.
\begin{theorem}[Decomposition Theorem]\label{T:cosdecomp} Every  full group
relation algebra is isomorphic to a direct product of full group
relation algebras on simple frames\per
\end{theorem}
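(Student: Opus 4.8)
The plan is to realize $\cra G F$ as a direct product indexed by the equivalence classes of the isomorphism index set $\mc E$, the factor attached to a class $J$ being the full group relation algebra of the restriction $\cs{\mc F}J$.  First I would check that for each equivalence class $J$ of $\mc E$ the restriction $\cs{\mc F}J$ is again a group frame, and moreover a simple one.  This is immediate from the definitions: every frame condition \textnormal{(i)--(iv)} of Definition~\ref{D:cosfra} constrains only pairs $\pair xy$ and $\pair yz$ in $\mc E$ whose indices are mutually $\mc E$-equivalent and hence lie in a single class (compare Theorem~\ref{T:simpfr}), and the subgroups $\h\xy$, $K_{\xy}$, their coset systems, and the quotient isomorphisms $\vphi\xy$ that enter those conditions are literally the data occurring in $\mc F$; so $\cs{\mc F}J$ inherits \textnormal{(i)--(iv)}.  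Its isomorphism index set is the universal relation $J\times J$ on the non-empty set $J$, so $\cs{\mc F}J$ is simple, and by Group Frame Theorem~\ref{T:closed} (with Theorem~\ref{T:simplegra}) $\cras G{\cs{\mc F}J}$ is a complete, atomic, measurable, simple set relation algebra with base set $\cs UJ=\tbigcup_{x\in J}\G x$ and unit $\cs EJ=\cs UJ\times\cs UJ$, whose atoms are precisely the relations $R_{\xy,\alpha}$ with $\pair xy\in J\times J$ and $\alpha<\kai\xy$.  Since Definition~\ref{D:compro} refers only to data internal to $J$, these are the very same relations that serve as atoms of $\cra G F$ with index in $J\times J$.

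Next I would decompose the unit.  As the groups $\G x$ are pairwise disjoint, the base sets $\cs UJ$ are pairwise disjoint, $U=\tbigcup_J\cs UJ$, and $E=\tbigcup_J\cs EJ$ is a disjoint union of the equivalence relations $\cs EJ$.  Hence $\Phi\colon R\mapsto\langle R\cap\cs EJ:J\rangle$ is a Boolean isomorphism of $\SB E$ onto $\prod_J\SB{\cs EJ}$, with inverse taking a tuple to the union of its entries, and I would verify that $\Phi$ is in fact a relation-algebra isomorphism of $\full E$ onto $\prod_J\full{\cs EJ}$: it sends $\id U$ to $\langle\id{\cs UJ}:J\rangle$; it commutes with converse because each $\cs EJ$ is symmetric; and it commutes with relative product because $(R\rp S)\cap\cs EJ=(R\cap\cs EJ)\rp(S\cap\cs EJ)$ for all $R,S\seq E$, the point being that a pair of $E$ with one coordinate in $\cs UJ$ has both coordinates in $\cs UJ$, so no relational path through $E$ leaves a component.

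It then remains to see that $\Phi$ carries $\craset G F$ exactly onto the set of tuples $\langle T_J:J\rangle$ whose $J$th entry lies in the universe of $\cras G{\cs{\mc F}J}$.  For $\mc X\seq\mc I$, setting $\mc X_J=\{((x,y),\alpha)\in\mc X:\pair xy\in J\times J\}$ one computes $S_{\mc X}\cap\cs EJ=S_{\mc X_J}$, an element of the universe of $\cras G{\cs{\mc F}J}$, so $\Phi$ maps into the product; conversely, for any such tuple each $T_J$ is a union of atomic relations of $\cra G F$, so $\tbigcup_J T_J$ lies in $\craset G F$ by Boolean Algebra Theorem~\ref{T:disj}, and $\Phi(\tbigcup_J T_J)=\langle T_J:J\rangle$ since the $\cs EJ$ are disjoint.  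Because $\Phi$ is a relation-algebra homomorphism and $\cra G F$ is a subalgebra of $\full E$ while each $\cras G{\cs{\mc F}J}$ is a subalgebra of $\full{\cs EJ}$ (Group Frame Theorem~\ref{T:closed}), this restriction is an isomorphism of $\cra G F$ onto $\prod_J\cras G{\cs{\mc F}J}$, exhibiting $\cra G F$ as a direct product of full group relation algebras on the simple frames $\cs{\mc F}J$.  (When $I=\varnot$ both sides collapse to the degenerate one-element algebra.)

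I expect the only genuinely delicate point to be the identity $(R\rp S)\cap\cs EJ=(R\cap\cs EJ)\rp(S\cap\cs EJ)$ --- that relative product respects the decomposition --- which is where the disjointness of the base sets $\cs UJ$ is used; everything else is bookkeeping with the definitions, the Boolean Algebra and Group Frame Theorems, and the $\mc E$-locality of the frame conditions.  As a by-product this completes the proof of Theorem~\ref{T:simplegra}: if $\mc F$ is not simple, then either $I=\varnot$, so $\cra G F$ is the one-element algebra and not simple, or $\mc E$ has at least two classes, so $\cra G F$ is a direct product of at least two non-degenerate relation algebras (each $\cs EJ\ne\varnot$) and therefore has a proper non-trivial ideal and is not simple.
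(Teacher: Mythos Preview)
Your proposal is correct and follows essentially the same approach as the paper: decompose $\mc F$ into its components $\cs{\mc F}J$ indexed by the equivalence classes of $\mc E$, observe that each component is a simple frame, and show that $\cra G F$ is (isomorphic to) the direct product of the algebras $\cras G{\cs{\mc F}J}$. The paper presents only a sketch and explicitly leaves the details to the reader; you have supplied precisely those details---the explicit isomorphism $\Phi$, the verification that relative product respects the decomposition, and the image computation---and your closing remark recovering the missing direction of Theorem~\ref{T:simplegra} matches what the paper does immediately after stating the Decomposition Theorem.
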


Return now to the proof of the reverse implication in
Theorem~\ref{T:simplegra}. Assume that the    frame $\mc F$ is not
simple. If the group index set $I$ is empty, then the base set $U$
is also empty, and in this case $\cra G F$ is a one-element relation
algebra with the empty relation as its only element. In particular,
$\cra GF$ is not simple. On the other hand, if the group index set
$I$ is not empty, then the isomorphism index set $\mc E$ has at
least two equivalence classes, by the definition of a simple frame.
The group relation algebra $\cra G F$ is isomorphic to the direct
product of the group relation algebras on the component  frames of
$\mc F$, by Decomposition Theorem~\ref{T:cosdecomp}, and there are
at least two such components. Each of these components is a simple
frame, so the corresponding group relation algebra is simple, by the
first part of the proof of Theorem~\ref{T:simplegra}. It follows
that $\cra GF$ is isomorphic to a direct product of at least two
simple relation algebras, so $\cra GF$ cannot be simple. For
example, the projection of $\cra G F$ onto one of the factor
algebras is a non-constant homomorphism that  is not injective.

\section{Summary}

The present paper generalizes the notion of pair density from
Maddux\,\cite{ma91} by introducing  the notion of a measurable
relation algebra.  A large class of examples of such algebras has
been constructed, namely the class of full group relation algebras.
Unfortunately, the class is not large enough to represent all
measurable relation algebras: there exist measurable relation
algebras that are not essentially isomorphic to (full) group
relation algebras, and in fact that are not representable as set
relation algebras at all. The next paper  in this series, \cite{ag},
greatly extends the class of examples of measurable relation
algebras by adding one more ingredient to the mix, namely systems of
cosets that are used to modify the operation of relative
multiplication. In the group relation algebras constructed in the
present paper, the operation of relative multiplication is just
relational composition, but in the coset relation algebras to be
constructed in the next paper, the operation of relative
multiplication is ``shifted" by coset multiplication, so that in
general it no longer coincides with composition.  On the one hand,
this shifting leads to examples of measurable relation algebras that
are not representable as  set relation algebras, see \cite[Theorem
5.2]{ag}. On the other hand, the class of coset relation algebras
constructed from systems of group pairs and shifting cosets really
is broad enough to include all measurable relation algebras. The
task of the third paper in the series, \cite{ga},  is to prove this
assertion, namely that every measurable relation algebra is
essentially isomorphic to a coset relation algebra, see
\cite[Theorem 7.2]{ga}.

%%%%%%%%%%%%%%%%%%%%%%%%%%%%%%%%%%%%%%%%%%%%%%%%%%%%%%%%%%%%%%%%%%%%%%
%% Acknowledgments (Optional)
%%%%%%%%%%%%%%%%%%%%%%%%%%%%%%%%%%%%%%%%%%%%%%%%%%%%%%%%%%%%%%%%%%%%%%

\subsection*{Acknowledgment}

The author is very much indebted to Dr.\,Hajnal Andr\'eka, of the
Alfr\'ed R\'enyi Mathematical Institute in Budapest, for carefully
reading a draft of this paper and making many extremely helpful
suggestions.

%%%%%%%%%%%%%%%%%%%%%%%%%%%%%%%%%%%%%%%%%%%%%%%%%%%%%%%%%%%%%%%%%%%%%%
%% BIBLIOGRAPHY
%%%%%%%%%%%%%%%%%%%%%%%%%%%%%%%%%%%%%%%%%%%%%%%%%%%%%%%%%%%%%%%%%%%%%%

%%%%%%%%%%%%%%%%%%%%%%%%%%%%%%%%%%%%%%%%%%%%%%%%%%%%%%%%%%%%%%
\end{document}